\DeclareMathOperator{\arcsinh}{arcsinh}
\newtheorem{theorem}{Theorem}[section]
\newtheorem{lemma}{Lemma}[section]
\newtheorem{proposition}{Proposition}[section]
\newtheorem{corollary}{Corollary}[section]
\theoremstyle{definition}
\newtheorem{definition}{Definition}[section]
\newtheorem{example}{Example}[section]
\newtheorem{remark}{Remark}[section]
\numberwithin{equation}{section}
\numberwithin{equation}{subsection}
\begin{document}
\pagenumbering{arabic}
\pgfplotsset{compat=1.18}

\title[Poncelet pairs of a circle and parabolas from a confocal family ]{Poncelet pairs of a circle and parabolas from a confocal family and Painlev\'e VI equations}
\author{Vladimir Dragovi\'{c}$^{1,2}$ AND Mohammad Hassan Murad$^{1,3}$}
\address{$^{1}$Department of Mathematical Sciences,
\\The University of Texas at Dallas,\\
800 W Campbell Rd, Richardson, Texas 75080\\
$^{2}$Mathematical Institute SANU, Belgrade, Serbia\\
$^{3}$Department of Mathematics and Natural Sciences\\
BRAC University, Dhaka, Bangladesh}
\email{vladimir.dragovic@utdallas.edu; mohammadhassan.murad@utdallas.edu}

\begin{abstract}
 We study pairs of conics $(\mathcal{D},\mathcal{P})$, called  \textit{$n$-Poncelet pairs},  such that an $n$-gon,  called an \textit{$n$-Poncelet polygon}, can be inscribed into $\mathcal{D}$ and circumscribed about $\mathcal{P}$. Here $\mathcal{D}$ is a circle and $\mathcal{P}$ is a parabola from a confocal pencil $\mathcal{F}$ with the focus $F$. We prove that the circle contains $F$ if and only if every parabola $\mathcal{P}\in\mathcal{F}$ forms a $3$-Poncelet pair with the circle. We prove that the center of $\mathcal{D}$ coincides with  $F$ if and only if every parabola $\mathcal{P}\in \mathcal{F}$ forms a $4$-Poncelet pair with the circle. We refer to such property, observed for $n=3$ and $n=4$, as \textit{$n$-isoperiodicity}. We prove that $\mathcal{F}$ is not $n$-isoperiodic with any circle $\mathcal{D}$ for $n$ different from $3$ and $4$. Using isoperiodicity, we construct explicit algebraic solutions to Painlev\'e VI equations.
\end{abstract}

\keywords{Cyclic $n$-gons; Confocal parabolas; $n$-Poncelet pairs; Cayley conditions; Isorotational families; Painlev\'{e} VI equations.}
\subjclass[2020]{Primary: 14H70, 34M55; Secondary: 37J70, 37A10, 51N20}

\maketitle{}

{\it Dedicated to the Memory of Academician A. A. Bolibrukh, on the occasion of his 75th anniversary}

\section{Introduction}\label{sec.1}
Let two smooth conics $\mathcal{C}_1$, $\mathcal{C}_2$ be given. Suppose there is an $n$-gon inscribed in $\mathcal{C}_1$ and circumscribed about $\mathcal{C}_2$. Then, by \textit{Poncelet's Theorem} (see e.g. \cite{Flatto2009,Dragovic-Radnovic2011}), for any point of $\mathcal{C}_1$, there exists such an $n$-gon, inscribed in $\mathcal{C}_1$ and circumscribed about $\mathcal{C}_2$, having this point as one of its vertices. See Figure \ref{fig.1} for an illustration of Poncelet's Theorem.

\begin{definition}\label{def.1.1}
For a natural number $n \geq 3$, an $n$-gon inscribed in $\mathcal{C}_1$ and circumscribed about $\mathcal{C}_2$ is called an \textit{$n$-Poncelet polygon}  and the pair $(\mathcal{C}_1,\mathcal{C}_2)$ is called an \textit{$n$-Poncelet pair} if there exists an $n$-Poncelet polygon inscribed in $\mathcal{C}_1$ and circumscribed about $\mathcal{C}_2$.
\end{definition}

In this paper, we study $n$-Poncelet pairs of a circle and a parabola from a confocal pencil. We will denote by $\mathcal{D}(E,R)$  the circle centered at $E$ with radius $R$. By $\mathcal{P}=\mathcal{P}(p)$ we will denote a parabola from the confocal family $\mathcal{F}$ of parabolas with the focus at $F=(0,0)$ and $x$-axis as the axis of symmetry, as given by the following:
\begin{eqnarray}
\mathcal{D}(E,R)&:&  (x-x_E)^2+(y-y_E)^2=R^2, \quad E\in \mathbb{R}^2, \label{eq.1.0.1}\\
\mathcal{P}(p)&:&  y^2=2px+p^2, \quad p \in \mathbb{R}^*=\mathbb{R}\setminus \{0\}. \label{eq.1.0.2}\\
\mathcal{F}&:&  \{\mathcal{P}(p) \mid ~p \in \mathbb{R}^*\}. \label{eq.1.0.3}
\end{eqnarray}
where $|p|$ is the distance between the focus and the directrix, $\ell: x=-p$, of the parabola.  

With the use of the following transformations:
\begin{equation}\label{eq.1.0.4}
\left(\frac{x}{R},\frac{y}{R}\right) \to (x,y), \qquad \frac{p}{R}\to p,
\end{equation}
it is sufficient to consider $R=1$. We will denote by $\mathcal{D}(E)$ the unit circle centered at $E$.

The organization of this paper is as follows. In Section \ref{sec.2} we review Cayley's theorem which provides the necessary and sufficient conditions for two conics to be an $n$-Poncelet pair for a given $n \geq 3$. For a fixed natural number $n \geq 3$ and a given parabola $\mathcal{P}\in \mathcal{F}$, we use Cayley's theorem in section \ref{sec.2} to describe the locus of the centers $(x,y)$ of $\mathcal{D}$ as an algebraic curve such that for a given parabola $\mathcal{P}(p)$, the pair $(\mathcal{D}(x,y),\mathcal{P}(p))$ is an $n$-Poncelet pair.

In section \ref{sec.3}, Cayley's theorem is used to derive the necessary and sufficient conditions on $x_E, y_E$ and $p$ for $(\mathcal{D}(E),\mathcal{P}(p))$ to be an $n$-Poncelet pair for $n = 3, 4$.

For $n=3$ we also prove that there exist $3$-Poncelet pairs if and only if the circle contains the focus of the parabolas. For $n=4$ we prove that if the center of the circle does not coincide with the focus or does not belong to the line containing the latus rectum, then there exists a unique $4$-Poncelet pair with the circle. And no $4$-Poncelet pair exists if the center of the circle lies on the line containing the latus rectum but does not coincide with the focus.

Moreover, for $n=3$, every parabola from the confocal family forms a $3$-Poncelet pair with the circle if and only if the circle contains the focus.

Similarly, every parabola from the confocal family forms a $4$-Poncelet pair with the circle if and only if the center of the circle coincides with the focus. We refer to such families as $3$-and $4$-isoperiodic families, respectively. 

This motivates us to define the following. 
\begin{definition}\label{def.1.2}
A family $\mathcal{F}$ of nondegenerate conics is said to be an \textit{$n$-isoperiodic with $\mathcal{D}$} if there exist a nondegenerate conic $\mathcal{D}$ and a natural number $n \geq 3$ such that $(\mathcal{D},\mathcal{C})$ is an $n$-Poncelet pair for every nondegenerate conic $\mathcal{C}\in \mathcal{F}$.
\end{definition}

\noindent This concept was introduced and applied in \cite{Dragovic-Radnovic2024} to investigate Poncelet pairs and $n$-isoperiodicity for pairs from a confocal pencil of central conics  with a circle.

We also provide  geometric proofs for $n$-isoperiodicity for $n=3,4$ in section \ref{sec.3}.
Geometric properties of associated Poncelet triangles and quadrilaterals are studied in \cite{Dragovic-Murad2025a}.

In section \ref{sec.4}, for $n = 5$ and $n = 6$, we describe the regions in the $\mathbb{R}^2$-plane that have $0$, $1$, or $2$
parabolas forming $n$-Poncelet pairs $(\mathcal{D}(E), \mathcal{P}(p))$. Similar description of the regions in the $\mathbb{R}^2$-plane is provided that gives $0$, $2$ or $4$ parabolas forming $7$-Poncelet
pairs.

In section \ref{sec.4}, we prove: 
\begin{center}
\textit{A confocal family of parabolas is $n$-isoperiodic with a circle if and only if $n \in \{3,4\}$.}    
\end{center}

Using isoperiodic families that we singled out for $n=3$ and $n=4$, we construct explicit algebraic solutions to Painlev\'e VI equations in section \ref{sec.5}.

The solution we obtained for Painlev\'e VI by using the isoperiodic family for the case of $n=3$ is equivalent to the one that was obtained by Hitchin \cite{Hitchin1995}.

However, we find an intriguing contrast with the situation studied in \cite{Dragovic-Radnovic2024}, where it was proved that isoperiodic families exist, in the case of central confocal conics, exactly for $n=4$ and $n=6$. Moreover, the solutions of Painlev\'e VI equations that we construct here for $n=4$ are different from the corresponding ones obtained in \cite{Dragovic-Radnovic2024}.

\section{Cayley's Theorem}\label{sec.2}
We start this section with Cayley's theorem. It gives a necessary and sufficient condition for two conics to be an $n$-Poncelet pair. We will use the same notation for a conic and for the corresponding $3\times 3$ symmetric matrix that defines it.

\begin{theorem}[Cayley 1853, \cite{Cayley1853a,Cayley1853b}\footnote{This theorem was incorrectly stated in \cite{Cayley1853a} but corrected in \cite{Cayley1853b}.}]\label{thm.2.1}
Let $\mathcal{C}_1$ and $\mathcal{C}_2$ be two distinct conics in $\mathbb{CP}^2$, and
\begin{equation*}
    \sqrt{\det(\lambda\mathcal{C}_1+\mathcal{C}_2)}=A_0+A_1\lambda+\cdots +A_n \lambda^n+\cdots.
\end{equation*}
Then $(\mathcal{C}_1,\mathcal{C}_2)$ is an $n$-Poncelet pair if and only if
\begin{eqnarray}
\left|\begin{array}{ccc}
    A_2 & \cdots & A_{m+1}\\
    \vdots & \ddots & \vdots \\
     A_{m+1} & \cdots & A_{2m}
\end{array}\right| &=& 0,~\text{if}~ n = 2m+1 ~(m \geq 1);\label{eq.2.0.1}\\\nonumber\\
\left|\begin{array}{ccc}
    A_3 & \cdots & A_{m+1}\\
    \vdots & \ddots & \vdots \\
     A_{m+1} & \cdots & A_{2m-1}
\end{array}\right| &=& 0,~\text{if}~ n = 2m ~(m \geq 2)\label{eq.2.0.2}.
\end{eqnarray}
\end{theorem}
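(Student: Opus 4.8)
The plan is to pass to the elliptic curve naturally attached to the pencil $\lambda\mathcal{C}_1+\mathcal{C}_2$ and to read the Poncelet closure condition off the group law. Since each $\lambda\mathcal{C}_1+\mathcal{C}_2$ is a $3\times 3$ symmetric matrix with entries linear in $\lambda$, the discriminant $f(\lambda):=\det(\lambda\mathcal{C}_1+\mathcal{C}_2)$ is a cubic in $\lambda$ (of degree exactly $3$ when $\mathcal{C}_1$ is nondegenerate), and I would set
\[
E:\qquad \mu^2=f(\lambda)=\det(\lambda\mathcal{C}_1+\mathcal{C}_2),
\]
a smooth genus-one curve. Its branch points $\lambda_1,\lambda_2,\lambda_3$ are the parameters of the three degenerate conics in the pencil; over $\lambda=\infty$ (the conic $\mathcal{C}_1$) there is a single ramification point $\infty\in E$, while over $\lambda=0$ (the conic $\mathcal{C}_2$) there are two points $Q_\pm=(0,\pm A_0)$ with $A_0=\sqrt{\det\mathcal{C}_2}$.

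First I would identify the Poncelet correspondence with a translation on $E$. Following the standard incidence construction, a point of $E$ encodes a point $p\in\mathcal{C}_1$ together with a tangent line of $\mathcal{C}_2$ through $p$; advancing one side of a Poncelet polygon is the composition of two involutions — swapping the two tangents to $\mathcal{C}_2$ at $p$, and swapping the two intersections of a tangent with $\mathcal{C}_1$ — each of which is a reflection $z\mapsto c-z$ in the group law. Their composition is therefore a translation $z\mapsto z+P$, and analyzing the tangency behaviour over the branch loci identifies the translation point as $P=Q_+-\infty$ in $\mathrm{Pic}^0(E)$. Consequently $(\mathcal{C}_1,\mathcal{C}_2)$ is $n$-Poncelet if and only if $nP\sim 0$, i.e. there exists a rational function on $E$ with divisor exactly $nQ_+-n\infty$.

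It remains to convert this $n$-torsion condition into Hankel determinants, and here the series $\mu=\sqrt{f(\lambda)}=\sum_{k\ge 0}A_k\lambda^k$ enters as the local expansion of $\mu$ at $Q_+$ (with $\lambda$ a local coordinate there, since $f(0)\neq 0$). A function with the required divisor lies in the Riemann--Roch space $L(n\infty)$ and must vanish to order $n$ at $Q_+$. I would take the explicit basis of $L(n\infty)$ consisting of the monomials $\lambda^j$ (pole order $2j$ at $\infty$) together with $\lambda^j\mu$ (pole order $2j+3$), retaining those of pole order $\le n$, and write a general element $\phi=\sum_j\alpha_j\lambda^j+\mu\sum_j\beta_j\lambda^j$. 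Imposing $\phi=O(\lambda^{n})$ at $Q_+$ gives $n$ linear equations; since the $\alpha_j$ only affect the coefficients of $\lambda^0,\dots,\lambda^{\lfloor n/2\rfloor}$, the top block of equations involves the $\beta_j$ alone and forms a square homogeneous Hankel system $\sum_j\beta_j A_{s-j}=0$. Its solvability is exactly the vanishing of a Hankel determinant, and the parity of $n$ — which controls how many terms $\lambda^j\mu$ have pole order $\le n$ — produces precisely the two shapes \eqref{eq.2.0.1} and \eqref{eq.2.0.2}: size $m$ starting at $A_2$ for $n=2m+1$, and size $m-1$ starting at $A_3$ for $n=2m$.

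The main obstacle I anticipate is the second step: rigorously establishing that the Poncelet correspondence is a genuine translation on $E$ and pinning down $P=Q_+-\infty$ with the correct normalization (the sign is immaterial, since only the torsion order matters). The final reduction is then essentially bookkeeping — reindexing the Hankel matrix and checking that the $\alpha$-block can always be solved once the $\beta$-block is — which I would confirm by hand in the cases $n=3,4$, yielding $A_2=0$ and $A_3=0$ respectively, before asserting the general pattern.
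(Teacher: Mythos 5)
The paper does not actually prove this statement: it is quoted as Cayley's classical theorem, with the reader pointed to \cite{Griffiths-Harris1978,Flatto2009,Dragovic-Radnovic2011} for proofs. Your sketch is, in substance, the Griffiths--Harris argument from the first of those references, and its skeleton is sound. The dimension count and the Hankel bookkeeping in your final step are correct: $L(n\infty)$ has dimension $n$ with basis $\{\lambda^j\}_{2j\le n}\cup\{\lambda^j\mu\}_{2j+3\le n}$, the vanishing of $\phi$ to order $n$ at $Q_+$ gives an $n\times n$ homogeneous system whose $\alpha$-block is unitriangular, and the residual $\beta$-block is exactly the Hankel matrix of size $m$ in $A_2,\dots,A_{2m}$ when $n=2m+1$ and of size $m-1$ in $A_3,\dots,A_{2m-1}$ when $n=2m$, i.e.\ conditions \eqref{eq.2.0.1} and \eqref{eq.2.0.2}. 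One should also note (as you implicitly do) that a nontrivial solution forces the $\beta$-part to be nonzero, so $\phi$ is nonconstant and its divisor is exactly $nQ_+-n\infty$, giving the equivalence with $n(Q_+-\infty)\sim 0$ in both directions.

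The genuine gap is the one you name yourself: showing that the Poncelet step is translation by precisely the class $Q_+-\infty$ on $E$. That the step is \emph{some} translation follows from your two-involutions observation, but identifying the translation vector requires the tangency analysis over the branch points (this is the content of the key lemma in \cite{Griffiths-Harris1978}), and without it the Hankel conditions could be attached to the wrong torsion point. A second, smaller caveat: closure after $n$ steps is equivalent to $nP\sim 0$, which also holds when $kP\sim 0$ for a proper divisor $k\mid n$; this is why the paper's Remark following the theorem, and Proposition \ref{prop.6.1}, divide out the factors $\tilde{\mathcal{Q}}^k$ for $k\mid n$. Your proof as stated establishes the theorem in the same convention the paper adopts, so this is a matter of interpretation rather than an error, but it is worth making explicit.
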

For proofs and various variations and generalizations of Cayley's theorem see e.g. \cite{Griffiths-Harris1978,Flatto2009,Dragovic-Radnovic2011}.

\begin{remark}
    Cayley's theorem is based on the consideration that every $k$-gon is also an $n$-gon if $k|n$ for $k,n \geq 3$ and $k<n$. From now on, we will not consider a $k$-gon as a special $mk$-gon where the $k$-gon is  traversed $m$ times, for any $m>1$. For example, a triangle is not a hexagon, traversed twice.
\end{remark}

The characteristic polynomial for the pair $(\mathcal{C}_1,\mathcal{C}_2)$ is given by
\begin{equation}\label{eq.2.0.3}
    f(\lambda):=\det(\lambda\mathcal{C}_1+\mathcal{C}_2)=\Delta_1 \lambda^3+\Theta_1\lambda^2+\Theta_2\lambda+\Delta_2,
\end{equation}
where
\begin{eqnarray*}
    \Delta_1 = \det \mathcal{C}_1,\quad \Theta_1 = \mathrm{tr}(\mathrm{adj}(\mathcal{C}_1)~\mathcal{C}_2), \quad \Theta_2 = \mathrm{tr}(\mathcal{C}_1~\mathrm{adj}(\mathcal{C}_2)), \quad \Delta_2 = \det \mathcal{C}_2.
\end{eqnarray*}

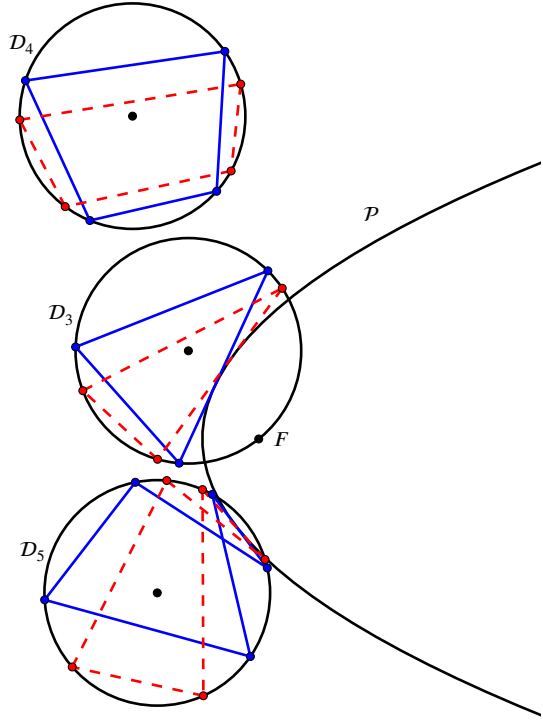
\begin{figure}
    \centering
\definecolor{qqqqff}{rgb}{0.,0.,1.}
\definecolor{ffqqqq}{rgb}{1.,0.,0.}
\begin{tikzpicture}[scale=1.5]
\clip(-2.5,-2.5) rectangle (2.5,4.);
\draw [line width=1.pt] (-0.6234179512971516,0.7818887759782013) circle (1.cm);
\draw [samples=100,rotate around={-90.:(-0.5,0.)},xshift=-0.5cm,yshift=0.cm,line width=1.pt,domain=-6.0:6.0)] plot (\x,{(\x)^2/2/1.0});
\draw [line width=1.pt,color=qqqqff] (-1.6228378603723965,0.8159452782065496)-- (0.08114636169074187,1.4915289163760637);
\draw [line width=1.pt,color=qqqqff] (0.08114636169074187,1.4915289163760637)-- (-0.7051444039126917,-0.21476602230597888);
\draw [line width=1.pt,color=qqqqff] (-0.7051444039126917,-0.21476602230597888)-- (-1.6228378603723965,0.8159452782065496);
\draw [line width=1.pt] (-1.1183562133739073,2.863276083436026) circle (1.cm);
\draw [line width=1.pt,color=qqqqff] (-2.06730763936574,3.1786983260063106)-- (-0.2999701655273033,3.4379449243261653);
\draw [line width=1.pt,color=qqqqff] (-0.2999701655273033,3.4379449243261653)-- (-0.37280356430466965,2.1968293507815386);
\draw [line width=1.pt,color=qqqqff] (-0.37280356430466965,2.1968293507815386)-- (-1.4966310571216137,1.9375827526900125);
\draw [line width=1.pt,color=qqqqff] (-1.4966310571216137,1.9375827526900125)-- (-2.06730763936574,3.1786983260063106);
\draw [line width=1.pt] (-0.8991308467779795,-1.3651422554041102) circle (1.cm);
\draw [line width=1.pt,color=qqqqff] (-1.8972351479880025,-1.4266873925479302)-- (-1.0934641975778934,-0.3842067076911745);
\draw [line width=1.pt,color=qqqqff] (-1.0934641975778934,-0.3842067076911745)-- (0.07609785282640974,-1.1439432873506137);
\draw [line width=1.pt,color=qqqqff] (0.07609785282640974,-1.1439432873506137)-- (-0.41198711880377226,-0.4918204428427891);
\draw [line width=1.pt,color=qqqqff] (-0.41198711880377226,-0.4918204428427891)-- (-0.07379805259746797,-1.9297888518254374);
\draw [line width=1.pt,color=qqqqff] (-0.07379805259746797,-1.9297888518254374)-- (-1.8972351479880025,-1.4266873925479302);
\draw [line width=1.pt,dash pattern=on 4pt off 4pt,color=ffqqqq] (-1.5587673494575651,0.4281635954199716)-- (0.20850659147678305,1.3367775532393562);
\draw [line width=1.pt,dash pattern=on 4pt off 4pt,color=ffqqqq] (0.20850659147678305,1.3367775532393562)-- (-0.89657514461357,-0.1800806350424904);
\draw [line width=1.pt,dash pattern=on 4pt off 4pt,color=ffqqqq] (-0.89657514461357,-0.1800806350424904)-- (-1.5587673494575651,0.4281635954199716);
\draw [line width=1.pt,dash pattern=on 4pt off 4pt,color=ffqqqq] (-2.1178228521428233,2.830619721299904)-- (-0.1595297087091903,3.147268572689601);
\draw [line width=1.pt,dash pattern=on 4pt off 4pt,color=ffqqqq] (-0.1595297087091903,3.147268572689601)-- (-0.2442886617312895,2.377471683326806);
\draw [line width=1.pt,dash pattern=on 4pt off 4pt,color=ffqqqq] (-0.2442886617312895,2.377471683326806)-- (-1.7150712036911653,2.0608228322552025);
\draw [line width=1.pt,dash pattern=on 4pt off 4pt,color=ffqqqq] (-1.7150712036911653,2.0608228322552025)-- (-2.1178228521428233,2.830619721299904);
\draw [line width=1.pt,dash pattern=on 4pt off 4pt,color=ffqqqq] (-1.6527856683892757,-2.0224126790974672)-- (-0.8150648374120749,-0.36868206750431676);
\draw [line width=1.pt,dash pattern=on 4pt off 4pt,color=ffqqqq] (-0.8150648374120749,-0.36868206750431676)-- (0.055990574282660854,-1.0689276861623145);
\draw [line width=1.pt,dash pattern=on 4pt off 4pt,color=ffqqqq] (0.055990574282660854,-1.0689276861623145)-- (-0.4981164993903512,-0.449070482457648);
\draw [line width=1.pt,dash pattern=on 4pt off 4pt,color=ffqqqq] (-0.4981164993903512,-0.449070482457648)-- (-0.49041023366909225,-2.277801808808278);
\draw [line width=1.pt,dash pattern=on 4pt off 4pt,color=ffqqqq] (-0.49041023366909225,-2.277801808808278)-- (-1.6527856683892757,-2.0224126790974672);
\begin{scriptsize}
\draw [fill=black] (0,0) circle (1.0pt);
\draw [fill=black] (-0.6234179512971784,0.7818887759780344) circle (1.0pt);
\draw [fill=qqqqff] (-1.6228378603723965,0.8159452782065496) circle (1.0pt);
\draw [fill=qqqqff] (0.08114636169074187,1.4915289163760637) circle (1.0pt);
\draw [fill=qqqqff] (-0.7051444039126917,-0.21476602230597888) circle (1.0pt);
\draw [fill=black] (-1.118356213373633,2.8632760834362996) circle (1.0pt);
\draw [fill=qqqqff] (-2.06730763936574,3.1786983260063106) circle (1.0pt);
\draw [fill=qqqqff] (-0.2999701655273033,3.4379449243261653) circle (1.0pt);
\draw [fill=qqqqff] (-0.37280356430466965,2.1968293507815386) circle (1.0pt);
\draw [fill=qqqqff] (-1.4966310571216137,1.9375827526900125) circle (1.0pt);
\draw [fill=black] (-0.8991308467772496,-1.365142255403641) circle (1.0pt);
\draw [fill=qqqqff] (-1.8972351479880025,-1.4266873925479302) circle (1.0pt);
\draw [fill=qqqqff] (-1.0934641975778931,-0.3842067076911743) circle (1.0pt);
\draw [fill=qqqqff] (-0.07379805259746797,-1.9297888518254374) circle (1.0pt);
\draw [fill=qqqqff] (0.07609785282640974,-1.1439432873506137) circle (1.0pt);
\draw [fill=qqqqff] (-0.41198711880377226,-0.4918204428427891) circle (1.0pt);
\draw [fill=ffqqqq] (-1.5587673494575651,0.4281635954199716) circle (1.0pt);
\draw [fill=ffqqqq] (0.20850659147678305,1.3367775532393562) circle (1.0pt);
\draw [fill=ffqqqq] (-0.89657514461357,-0.1800806350424904) circle (1.0pt);
\draw [fill=ffqqqq] (-2.1178228521428233,2.830619721299904) circle (1.0pt);
\draw [fill=ffqqqq] (-0.1595297087091903,3.147268572689601) circle (1.0pt);
\draw [fill=ffqqqq] (-0.2442886617312895,2.377471683326806) circle (1.0pt);
\draw [fill=ffqqqq] (-1.7150712036911653,2.0608228322552025) circle (1.0pt);
\draw [fill=ffqqqq] (-1.6527856683892757,-2.0224126790974672) circle (1.0pt);
\draw [fill=ffqqqq] (-0.8150648374120737,-0.36868206750431687) circle (1.0pt);
\draw [fill=ffqqqq] (-0.49041023366909225,-2.277801808808278) circle (1.0pt);
\draw [fill=ffqqqq] (0.055990574282660854,-1.0689276861623145) circle (1.0pt);
\draw [fill=ffqqqq] (-0.4981164993903512,-0.449070482457648) circle (1.0pt);
\draw[color=black] (-1.75,1.1) node {$\mathcal{D}_3$};
\draw[color=black] (-2.1,3.5) node {$\mathcal{D}_4$};
\draw[color=black] (-2,-1) node {$\mathcal{D}_5$};
\draw[color=black] (0.2,0) node {$F$};
\draw[color=black] (1,2) node {$\mathcal{P}$};
\end{scriptsize}
\end{tikzpicture}
    \caption{An illustration of Poncelet's Theorem. $(\mathcal{D}_n,\mathcal{P})$ is a $n$-Poncelet pair. Two $n$-Poncelet polygons are inscribed in $\mathcal{D}_n$ and circumscribed about $\mathcal{P}$.}
    \label{fig.1}
\end{figure}

\subsection{Recursive formula for $A_k$}

\begin{lemma}\label{lemm.2.1}
Let
\begin{equation}\label{eq.2.1.1}
y:=\sqrt{f(\lambda)}=\sum_{k=0}^{+\infty} A_k \lambda^k.
\end{equation}
Then the recursive formula for $A_k$, $k \in \mathbb{N}$ is given by
\begin{eqnarray}\label{eq.2.1.2}
A_k=\left\{\begin{array}{lll}
   \sqrt{\Delta_2}  & if & k=0 \\
   \displaystyle\frac{1}{k!A_0}\tilde{A}_{k}  & if & k\in \mathbb{N}
\end{array}\right.
\end{eqnarray}
where
\begin{eqnarray}
     \tilde{A}_1&=&\frac{1}{2}\Theta_2\label{eq.2.1.3},\\
    \tilde{A}_{k+1}=&&\frac{1}{2}f^{(k+1)}(0)-\frac{1}{A_0^2}\sum_{l=1}^{k} \binom{k}{l}  \tilde{A}_l \tilde{A}_{k-l+1},\quad k \in \mathbb{N}.\label{eq.2.1.4}
\end{eqnarray}
\end{lemma}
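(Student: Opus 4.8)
The plan is to work directly from the defining relation $y^2 = f(\lambda)$ together with the Taylor-coefficient interpretation of the $A_k$. Since $f(0) = \Delta_2$, one takes the branch of the square root with $y(0) = A_0 = \sqrt{\Delta_2}$; provided $\Delta_2 \neq 0$ (which holds as $\mathcal{C}_2$ is nondegenerate, and is in any case needed since $A_0$ appears in denominators), $y$ is analytic near $\lambda = 0$, so the expansion \eqref{eq.2.1.1} is legitimate and $A_k = y^{(k)}(0)/k!$. The base case $A_0 = \sqrt{\Delta_2}$ is then immediate, and it is natural to introduce the auxiliary quantities $\tilde{A}_k := A_0\, y^{(k)}(0)$, for which the claimed identity $A_k = \tilde{A}_k/(k!\,A_0)$ holds by construction. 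The whole lemma thus reduces to verifying the recursion \eqref{eq.2.1.3}--\eqref{eq.2.1.4} for the $\tilde{A}_k$.

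First I would differentiate $y^2 = f(\lambda)$ once to obtain $2 y y' = f'(\lambda)$. Evaluating at $\lambda = 0$ gives $2 A_0\, y'(0) = f'(0) = \Theta_2$, hence $\tilde{A}_1 = A_0\, y'(0) = \tfrac12 \Theta_2$, which is \eqref{eq.2.1.3}. The key step is then to differentiate the \emph{first-order} relation $2 y y' = f'(\lambda)$ exactly $k$ times by the general Leibniz rule, rather than differentiating $y^2 = f$ directly; this is precisely what produces the binomial coefficients $\binom{k}{l}$ and the single factor $\tfrac12$ seen in \eqref{eq.2.1.4}. Writing $y^{(j)}(0)$ for the derivatives at the origin, Leibniz yields $2\sum_{j=0}^{k}\binom{k}{j} y^{(j)}(0)\, y^{(k-j+1)}(0) = f^{(k+1)}(0)$.

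The only summand containing the highest derivative $y^{(k+1)}(0)$ is the $j=0$ term, which equals $2 A_0\, y^{(k+1)}(0)$; isolating it gives $A_0\, y^{(k+1)}(0) = \tfrac12 f^{(k+1)}(0) - \sum_{l=1}^{k}\binom{k}{l} y^{(l)}(0)\, y^{(k-l+1)}(0)$. Substituting $y^{(l)}(0) = \tilde{A}_l/A_0$ throughout, and recognizing the left side as $\tilde{A}_{k+1}$, converts each product $y^{(l)}(0)\, y^{(k-l+1)}(0)$ into $\tilde{A}_l \tilde{A}_{k-l+1}/A_0^2$, reproducing \eqref{eq.2.1.4} verbatim. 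An induction on $k$ then closes the argument, the $\tilde{A}_k$ for $k \le k$ being available at each stage.

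The computation is elementary and self-contained, so I expect no serious obstacle; the difficulty is purely one of bookkeeping. The one point that genuinely requires care is the choice to differentiate $2yy' = f'$ rather than $y^2 = f$: differentiating the latter $k$ times gives an equivalent but superficially different recursion (with $\binom{k+1}{j}$ and an overall $\tfrac12$ on the entire right side), so matching the stated form \eqref{eq.2.1.4} exactly hinges on starting from the first-order relation and on correctly tracking how the factor $2$ distributes. It is also worth noting that, since $f$ is a cubic, $f^{(k+1)}(0) = 0$ for $k+1 > 3$, so the recursion becomes purely quadratic in the $\tilde{A}_l$ for large $k$; this is a convenient sanity check rather than a step in the proof.
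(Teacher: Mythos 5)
Your proposal is correct and follows essentially the same route as the paper: both set $\tilde{A}_k = A_0\,y^{(k)}(0)$, apply the Leibniz rule to the first-order relation $yy'=\tfrac12 f'$ (equivalently $(yy')^{(k)}=\tfrac12 f^{(k+1)}$), isolate the $j=0$ term containing $y^{(k+1)}(0)$, and substitute $y^{(l)}(0)=\tilde{A}_l/A_0$ to recover \eqref{eq.2.1.4}. Your additional remarks on the branch choice, the nonvanishing of $\Delta_2$, and the vanishing of $f^{(k+1)}(0)$ for $k+1>3$ (which is exactly Corollary \ref{cor.2.1}) are consistent with the paper.
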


\begin{proof} We start from two obvious relations:
\begin{eqnarray}
A_k &=& \frac{y^{(k)}(0)}{k!} \label{eq.2.1.5}\\
y(0)y'(0)&=&\frac{1}{2}f'(0). \label{eq.2.1.6}
\end{eqnarray}
Then, by applying Leibniz's rule to the following product
\begin{equation*}
\left(y y'\right)^{(k)}=\frac{1}{2} f^{(k+1)},
\end{equation*}
and evaluating at $\lambda = 0$, we obtain
\begin{eqnarray*}
y(0)y^{(k+1)}(0)&=&\frac{1}{2}f^{(k+1)}(0)-\sum_{l=1}^{k} \binom{k}{l} y^{(l)}(0) y^{(k-l+1)}(0).
\end{eqnarray*}
Now we use the eq. \eqref{eq.2.1.5} in the previous equation to conclude the proof.
\end{proof}

In particular, the previous lemma gives
\begin{eqnarray*}
\tilde{A}_2&:=&\frac{1}{4\Delta_2}(4 \Delta_2\Theta_1 -\Theta_2^2),\\
\tilde{A}_3&:=&\frac{3}{8\Delta_2^2}(8\Delta_1\Delta_2^2-4\Delta_2\Theta_1\Theta_2+\Theta_2^3).
\end{eqnarray*}
\begin{corollary}\label{cor.2.1}
For $k \geq 3$, the eq. \eqref{eq.2.1.4} reduces to the following:
 \begin{eqnarray}
\tilde{A}_{k+1}&=&-\frac{1}{A_0^2}\sum_{l=1}^{k} \binom{k}{l} \tilde{A}_l \tilde{A}_{k-l+1}.\label{eq.2.1.7}
\end{eqnarray}
\end{corollary}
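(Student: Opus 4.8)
The plan is to exploit the fact that the characteristic polynomial $f(\lambda)$ has low degree. By its definition in \eqref{eq.2.0.3}, $f(\lambda)=\det(\lambda\mathcal{C}_1+\mathcal{C}_2)$ is the determinant of a $3\times 3$ matrix whose entries are affine in $\lambda$, so $f$ is a polynomial in $\lambda$ of degree at most $3$. Indeed, \eqref{eq.2.0.3} exhibits it explicitly as the cubic $\Delta_1\lambda^3+\Theta_1\lambda^2+\Theta_2\lambda+\Delta_2$.

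The key step is then the observation that all derivatives of $f$ of order four or higher vanish identically, that is, $f^{(m)}\equiv 0$ for every $m\geq 4$. In particular, whenever $k\geq 3$ we have $k+1\geq 4$, and therefore $f^{(k+1)}(0)=0$.

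Substituting $f^{(k+1)}(0)=0$ into the recursion \eqref{eq.2.1.4} cancels the first summand on its right-hand side and leaves exactly the expression \eqref{eq.2.1.7}, which is the claim. There is no genuine obstacle in this argument: its entire content is the degree count for $f$, which is immediate from \eqref{eq.2.0.3}, and no separate treatment of small $k$ is required, since the hypothesis $k\geq 3$ is precisely the threshold that forces the derivative term to disappear.
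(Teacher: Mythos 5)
Your argument is correct and is exactly the intended one: since $f(\lambda)=\Delta_1\lambda^3+\Theta_1\lambda^2+\Theta_2\lambda+\Delta_2$ is a cubic, $f^{(k+1)}\equiv 0$ for $k\geq 3$, so the term $\tfrac{1}{2}f^{(k+1)}(0)$ in \eqref{eq.2.1.4} vanishes and \eqref{eq.2.1.7} follows. The paper leaves this proof implicit, and your degree-count is precisely the content it relies on.
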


Using the same symbols $\mathcal{P}(p)$ and $\mathcal{D}(E)$ to denote the matrices associated with the conics (parabolas and circles of radius 1) given by equations \eqref{eq.1.0.1}-\eqref{eq.1.0.2}, we write:
\begin{eqnarray}
\mathcal{P}(p) &=&\begin{pmatrix}
0 & 0 & -p\\
0 & 1 & 0\\
-p & 0 & -p^2
\end{pmatrix},\label{eq.2.1.8}\\
\mathcal{D}(E) &=& \begin{pmatrix}
1 & 0 & -x_E\\
0 & 1 & -y_E\\
-x_E & -y_E & x_E^2+y_E^2-1
\end{pmatrix}.\label{eq.2.1.9}
\end{eqnarray}
It is easy to see that matrices $\mathcal{P}(p)$ and $\mathcal{D}(E)$ are nonsingular for $p \in \mathbb{R}^*$ and for all $E \in \mathbb{R}^2$.

Forming the pencil, we thus obtain
\begin{equation}\label{eq.2.0.10}
 \det(\lambda\mathcal{D}(x_E,y_E)+\mathcal{P}(p))=\Delta_1(x_E,y_E,p) \lambda^3+\Theta_1(x_E,y_E,p)\lambda^2+\Theta_2(x_E,y_E,p)\lambda+\Delta_2(x_E,y_E,p)   
\end{equation}
where
\begin{eqnarray}
    \Delta_1(x_E,y_E,p) &=& -1, \label{eq.2.1.11}\\
    \Theta_1(x_E,y_E,p) &=& -p^2 - 2p x_E + y_E^2 - 1, \label{eq.2.1.12}\\
    \Theta_2(x_E,y_E,p) &=& -2 p^2 - 2p x_E, \label{eq.2.1.13}\\
    \Delta_2(x_E,y_E,p) &=& -p^2. \label{eq.2.1.14}
\end{eqnarray}

We note the following symmetries:
\begin{eqnarray}\label{eq.2.1.15}
    g(-x_E,y_E,-p) = g(x_E,y_E,p) = g(x_E,-y_E,p)
\end{eqnarray}
where $g=\Delta_k,\Theta_k,k=1,2$.

\subsection{Locus of the centers of circles for a given parabola from an $n$-Poncelet pair}\label{sec.2.2}

In eq. \eqref{eq.2.1.2}, we defined
\begin{equation*}
  \tilde{A}_n(p,x,y):=n!A_0 A_n(x,y,p).
\end{equation*}

By the mathematical induction on $n$, it easily follows from the eq. \eqref{eq.2.1.12} and the eqs. \eqref{eq.2.1.3}-\eqref{eq.2.1.4} of Lemma \ref{lemm.2.1} that $\tilde{A}_n(p,x,y) \in \mathbb{R}[x,y]$, and

\begin{lemma}\label{lemm.6.1} The following identities hold:
    \begin{eqnarray*}
\left|\begin{array}{ccc}
    A_2(x,y,p) & \cdots & A_{m+1}(x,y,p)\\
    \vdots & \ddots & \vdots \\
     A_{m+1}(x,y,p) & \cdots & A_{2m}(x,y,p)
\end{array}\right| &=& \frac{1}{A_0^m}\left|\begin{array}{ccc}
    \frac{1}{2!}\tilde{A}_2(p,x,y) & \cdots & \frac{1}{(m+1)!}\tilde{A}_{m+1}(p,x,y)\\
    \vdots & \ddots & \vdots \\
     \frac{1}{(m+1)!}\tilde{A}_{m+1}(p,x,y) & \cdots & \frac{1}{(2m)!}\tilde{A}_{2m}(p,x,y)
\end{array}\right|\\\nonumber\\
\left|\begin{array}{ccc}
    A_3(x,y,p) & \cdots & A_{m+1}(x,y,p)\\
    \vdots & \ddots & \vdots \\
     A_{m+1}(x,y,p) & \cdots & A_{2m-1}(x,y,p)
\end{array}\right| &=& \frac{1}{A_0^m}\left|\begin{array}{ccc}
    \frac{1}{3!}\tilde{A}_3(p,x,y) & \cdots & \frac{1}{(m+1)!}\tilde{A}_{m+1}(p,x,y)\\
    \vdots & \ddots & \vdots \\
     \frac{1}{(m+1)!}\tilde{A}_{m+1}(p,x,y) & \cdots & \frac{1}{(2m-1)!}\tilde{A}_{2m-1}(p,x,y)
\end{array}\right|.
\end{eqnarray*}
\end{lemma}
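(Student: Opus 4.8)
The plan is to reduce both determinantal identities to a single, purely algebraic observation: how the determinant of a square matrix behaves when every entry is multiplied by one and the same scalar, combined with the explicit relation between $A_k$ and $\tilde{A}_k$ furnished by Lemma~\ref{lemm.2.1}.

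First I would invoke eq.~\eqref{eq.2.1.2}, which gives, for every index $k\in\mathbb{N}$,
\[
A_k(x,y,p)=\frac{1}{k!\,A_0}\,\tilde{A}_k(p,x,y),\qquad A_0=\sqrt{\Delta_2}.
\]
The crucial point is that every entry occurring in either Cayley determinant has subscript $k\geq 2$ (indeed $k\geq 3$ in the even case), so the displayed formula applies uniformly to all of them and none of them is the exceptional term $A_0$. Consequently each entry carries the common scalar factor $1/A_0$, exactly the feature needed to pull it out of the determinant.

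For the odd case $n=2m+1$ the left-hand matrix is the $m\times m$ Hankel matrix with $(i,j)$ entry $A_{i+j}$, and substituting the relation above gives
\[
A_{i+j}=\frac{1}{A_0}\cdot\frac{1}{(i+j)!}\,\tilde{A}_{i+j}.
\]
Thus this matrix equals the scalar $1/A_0$ times the matrix whose $(i,j)$ entry is $\tfrac{1}{(i+j)!}\tilde{A}_{i+j}$. By multilinearity of the determinant in its rows, factoring $1/A_0$ out of each of the $m$ rows produces $1/A_0^{m}$ in front of the determinant of the rescaled matrix, which is precisely the asserted right-hand side. The even case $n=2m$ is handled identically: there the block has $(i,j)$ entry $A_{i+j+1}$, each entry again factors as $\tfrac{1}{A_0}\cdot\tfrac{1}{(i+j+1)!}\tilde{A}_{i+j+1}$, and factoring $1/A_0$ out of every row yields the analogous statement.

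I do not anticipate any real obstacle; the content is essentially bookkeeping. The only point that needs care is matching the power of $1/A_0$ to the size of the block, since under a uniform rescaling of entries the determinant is homogeneous of degree equal to the number of rows. Here the odd-case block is $m\times m$ while the even-case block is $(m-1)\times(m-1)$, so the exponent of $1/A_0$ is fixed accordingly once the index ranges are pinned down.
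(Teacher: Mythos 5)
Your approach is the right one, and it is in fact the only argument available: the paper states this lemma without any proof, treating it as an immediate consequence of the definition $\tilde{A}_k(p,x,y)=k!\,A_0\,A_k(x,y,p)$ from eq.~\eqref{eq.2.1.2}, which is exactly the substitution-plus-multilinearity bookkeeping you describe. So there is no divergence of method to report.

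There is, however, one point you raise but do not push to its conclusion, and it matters. You correctly observe that the odd-case block is $m\times m$ while the even-case block is $(m-1)\times(m-1)$ (its first row runs $A_3,\dots,A_{m+1}$, which is $m-1$ entries), and that factoring $1/A_0$ out of every row yields $A_0^{-r}$ where $r$ is the number of rows. Carried out honestly, your computation therefore gives $A_0^{-m}$ in the odd case but $A_0^{-(m-1)}$ in the even case --- whereas the lemma as printed asserts $A_0^{-m}$ in both. (The $m=2$, $n=4$ instance makes this concrete: the left side is the $1\times 1$ determinant $A_3=\tfrac{1}{3!A_0}\tilde{A}_3$, which is $A_0^{-1}$ times the rescaled entry, not $A_0^{-2}$ times it.) So your proof establishes a corrected version of the second identity rather than the one stated; the stated exponent appears to be a typo in the paper. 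You should say this explicitly rather than leaving it at ``the exponent is fixed accordingly.'' The discrepancy is harmless for everything downstream, since $A_0=\sqrt{-p^2}\neq 0$ for $p\in\mathbb{R}^*$ and Proposition~\ref{prop.6.1} only uses the fact that the two determinants have the same vanishing locus, but a proof that silently delivers a different constant from the one claimed is not a complete proof of the claim as written.
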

Define
\begin{eqnarray}\label{eq.2.2.1}
\tilde{\mathcal{Q}}^n(p,x,y):=\left\{\begin{array}{lll}
    \left|\begin{array}{ccc}
    \frac{1}{2!}\tilde{A}_2(p,x,y) & \cdots & \frac{1}{(m+1)!}\tilde{A}_{m+1}(p,x,y)\\
    \vdots & \ddots & \vdots \\
     \frac{1}{(m+1)!}\tilde{A}_{m+1}(p,x,y) & \cdots & \frac{1}{(2m)!}\tilde{A}_{2m}(p,x,y)
\end{array}\right| &\text{if} & n = 2m+1 ~(m \geq 1); \\\\
    \left|\begin{array}{ccc}
    \frac{1}{3!}\tilde{A}_3(p,x,y) & \cdots & \frac{1}{(m+1)!}\tilde{A}_{m+1}(p,x,y)\\
    \vdots & \ddots & \vdots \\
     \frac{1}{(m+1)!}\tilde{A}_{m+1}(p,x,y) & \cdots & \frac{1}{(2m-1)!}\tilde{A}_{2m-1}(p,x,y)
\end{array}\right| & \text{if} & n = 2m ~(m \geq 2).
\end{array}\right.
\end{eqnarray}

\begin{proposition} \label{prop.6.1}
For a given parabola $\mathcal{P}(p)\in \mathcal{F}$, the pair $(\mathcal{D}(x,y),\mathcal{P}(p))$ is an $n$-Poncelet pair if and only if the center $(x,y)$ of $\mathcal{D}$ satisfies the algebraic curve given by
\begin{equation*}
\mathcal{Q}^n(p,x,y):=\frac{\tilde{\mathcal{Q}}^n(p,x,y)}{\prod_{k|n} \tilde{\mathcal{Q}}^k(p,x,y)}=0,\quad 3 \leq k < n.
\end{equation*}
\end{proposition}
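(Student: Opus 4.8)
The plan is to reduce the statement to Cayley's theorem and then account for the combinatorial convention in the Remark, namely that a genuine $n$-gon must have minimal period $n$. First, by Lemma \ref{lemm.6.1} the Cayley determinant \eqref{eq.2.0.1}--\eqref{eq.2.0.2} for the pair $(\mathcal{D}(x,y),\mathcal{P}(p))$ equals $A_0^{-m}\,\tilde{\mathcal{Q}}^n(p,x,y)$, where $A_0=\sqrt{\Delta_2}=\sqrt{-p^2}\neq 0$ for $p\in\mathbb{R}^*$. Hence, by Theorem \ref{thm.2.1}, the pair carries a closed Poncelet polygonal line of combinatorial length $n$ (allowing repeated traversals) if and only if $\tilde{\mathcal{Q}}^n(p,x,y)=0$. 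I would record here that, since $\Delta_1=-1\neq 0$, the characteristic cubic $f(\lambda)$ defines an elliptic curve $\mathcal{E}\colon w^2=f(\lambda)$ on which the Poncelet map is a translation by a point $P=P(x,y,p)$; closure of a length-$n$ line is equivalent to $nP=O$, i.e.\ to $\operatorname{ord}(P)\mid n$, while a genuine $n$-gon corresponds to $\operatorname{ord}(P)=n$. This is the standard elliptic-curve reading of Cayley's condition, cf.\ \cite{Griffiths-Harris1978}.

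Next I would translate the Remark's convention into the torsion language. A closed length-$k$ line exists if and only if $\tilde{\mathcal{Q}}^k=0$, i.e.\ $\operatorname{ord}(P)\mid k$, and every closed length-$n$ line is either a genuine $n$-gon or a genuine $d$-gon traversed $n/d$ times for a proper divisor $d\mid n$ with $d\geq 3$. In particular $\operatorname{ord}(P)\mid k$ forces $\operatorname{ord}(P)\mid n$ whenever $k\mid n$, giving the variety inclusions $\{\tilde{\mathcal{Q}}^k=0\}\subseteq\{\tilde{\mathcal{Q}}^n=0\}$. Thus the set of centers yielding a genuine $n$-Poncelet pair should be
\[
\{\tilde{\mathcal{Q}}^n=0\}\setminus\!\!\bigcup_{\substack{k\mid n\\ 3\leq k<n}}\!\!\{\tilde{\mathcal{Q}}^k=0\}.
\]

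The main step is to upgrade these inclusions to a polynomial divisibility $\tilde{\mathcal{Q}}^k\mid\tilde{\mathcal{Q}}^n$ in $\mathbb{R}[x,y]$ for each proper divisor $k\mid n$ with $k\geq 3$, so that the quotient $\mathcal{Q}^n=\tilde{\mathcal{Q}}^n/\prod_{k}\tilde{\mathcal{Q}}^k$ is an honest polynomial and its zero locus is exactly the set difference above. I expect this to be the hard part. The cleanest route is through $\mathcal{E}$: the Hankel-type determinant $\tilde{\mathcal{Q}}^n$ is, up to a nonvanishing factor, the $n$-th division-type polynomial evaluated at the shift $P$, and such polynomials factor over divisors, $\psi_n=\prod_{d\mid n}\phi_d$, into pairwise coprime primitive factors $\phi_d$; pulling this factorization back identifies $\prod_{3\leq k<n,\,k\mid n}\tilde{\mathcal{Q}}^k$ with the product of the corresponding primitive factors and $\mathcal{Q}^n$ with the primitive factor $\phi_n$, whose vanishing is precisely $\operatorname{ord}(P)=n$.

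Two points require care and should be checked directly from the recursion \eqref{eq.2.1.3}--\eqref{eq.2.1.4}. The first is the pairwise coprimality of the factors $\tilde{\mathcal{Q}}^k$, so that their \emph{product} (not merely the union of their zero sets) divides $\tilde{\mathcal{Q}}^n$. The second is the low-order torsion $\operatorname{ord}(P)\in\{1,2\}$, which is \emph{not} removed by the product over $k\geq 3$: since $\{\operatorname{ord}(P)=n\}\cup\{\operatorname{ord}(P)\mid 2\}$ is what the quotient isolates when $n$ is even, one must verify that for a circle and a parabola with $p\in\mathbb{R}^*$ no degenerate mono- or digon closure occurs, i.e.\ $P\neq O$ and $\operatorname{ord}(P)\neq 2$ on the relevant locus, so that these contribute no spurious zeros to $\mathcal{Q}^n$. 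Granting the divisibility together with this degeneracy check, $\mathcal{Q}^n(p,x,y)=0$ holds if and only if $\operatorname{ord}(P)=n$, that is, if and only if $(\mathcal{D}(x,y),\mathcal{P}(p))$ is a genuine $n$-Poncelet pair, which is the assertion.
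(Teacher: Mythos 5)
The paper's own proof of Proposition \ref{prop.6.1} is a single sentence: it is declared to follow from Theorem \ref{thm.2.1}, with Lemma \ref{lemm.6.1} identifying the Cayley determinants with $A_0^{-m}\tilde{\mathcal{Q}}^n$ and the Remark after Theorem \ref{thm.2.1} supplying the convention that a $k$-gon traversed $n/k$ times is not an $n$-gon. Your first two paragraphs cover exactly this ground and are correct: reading $\mathcal{Q}^n=0$ as the vanishing of a rational function (numerator zero, denominator nonzero) gives precisely the set difference $\{\tilde{\mathcal{Q}}^n=0\}\setminus\bigcup_{k\mid n,\,3\le k<n}\{\tilde{\mathcal{Q}}^k=0\}$, the torsion-order reading ($\operatorname{ord}(P)=n$ versus $\operatorname{ord}(P)\mid n$) is the right conceptual frame, and your worry about low-order torsion is correctly disposed of, since $P=O$ or $2P=O$ would force $\Delta_2=f(0)=0$ while $\Delta_2=-p^2\neq 0$ for $p\in\mathbb{R}^*$. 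So for the if-and-only-if statement itself your argument is complete and, if anything, more careful than the source.

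Where you go beyond the paper is the polynomial divisibility $\prod_{k}\tilde{\mathcal{Q}}^k\mid\tilde{\mathcal{Q}}^n$, which the paper never proves (it is visible only in the worked case $n=6$, eq. \eqref{eq.4.2.2}) and which is needed only to justify calling $\mathcal{Q}^n=0$ an algebraic curve, not for the equivalence. Here your proposed mechanism has a genuine gap: the factorization $\psi_n=\prod_{d\mid n}\phi_d$ of division polynomials lives on a \emph{fixed} elliptic curve with a varying point, whereas in this problem the curve $w^2=f(\lambda)$ varies with $(x,y,p)$ together with the point $P$, so there is no single curve from which to ``pull back'' that factorization. To make this step rigorous one would have to work with the Hankel determinants directly over the function field of the parameter space (or cite the classical factorization of Cayley determinants along divisors of $n$). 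Flag this as an open step rather than presenting the division-polynomial analogy as a finished argument; with that caveat, your route is sound and strictly more informative than the paper's one-line proof.
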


\begin{proof} 
Follows from the Theorem \ref{thm.2.1}.
\end{proof}

\begin{example}\label{ex.2.1}
For the parabola $\mathcal{P}(1/2)$,  $(\mathcal{D}(x,y),\mathcal{P}(1/2))$ is a $3$-Poncelet pair or a $4$-Poncelet pair if and only if the center $(x,y)$ of $\mathcal{D}(x,y)$ belongs to the following algebraic curves: 
\begin{eqnarray*}
  Q^3\left(\frac{1}{2}, x,y\right)&:=&  x^2 + y^2 -1=0,\\
  Q^4\left(\frac{1}{2}, x,y\right)&:=&  x^2 + y^2 + 2x(x^2+y^2-1)=0,
\end{eqnarray*}
respectively. See Figure \ref{fig.2}.
\end{example}

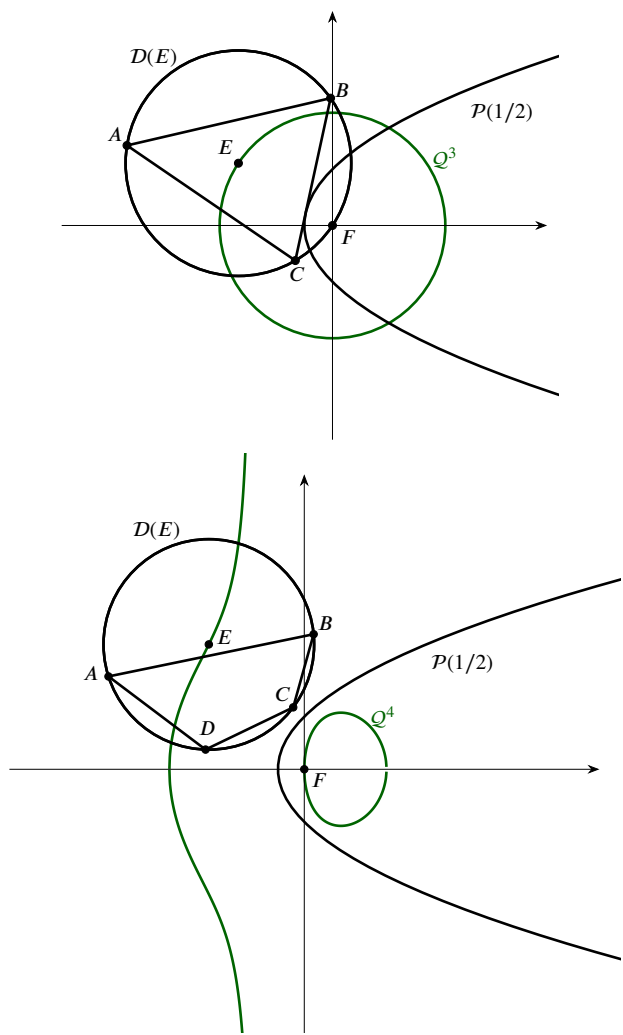
\begin{figure}
    \centering
\definecolor{qqwuqq}{rgb}{0.,0.39215686274509803,0.}
\begin{tikzpicture}[scale=1.5]
\clip(-2.5,-2) rectangle (2,2);
\draw [line width=1.pt,color=qqwuqq] (0.,0.) circle (1.cm);
\draw [samples=100,rotate around={-90.:(-0.25,0.)},xshift=-0.25cm,yshift=0.cm,line width=1.pt,domain=-4.0:4.0)] plot (\x,{(\x)^2/2/0.5});
\draw [line width=1.pt] (-0.833638963731342,0.5523097664795563) circle (1.cm);
\draw [line width=1.pt] (-0.833638963731342,0.5523097664795563) circle (1.cm);
\draw [line width=1.pt] (-1.8211528070302694,0.7098420149018443)-- (-0.0169319484555866,1.1293623214512563);
\draw [line width=1.pt] (-0.0169319484555866,1.1293623214512563)-- (-0.3291931719767479,-0.3111335977204823);
\draw [line width=1.pt] (-0.3291931719767479,-0.3111335977204823)-- (-1.8211528070302694,0.7098420149018443);
\begin{scriptsize}
\draw[color=qqwuqq] (1,0.6) node {$\mathcal{Q}^3$};
\draw[color=black] (1.5,1) node {$\mathcal{P}(1/2)$};
\draw [fill=black] (-0.8336389637315158,0.5523097664795042) circle (1.0pt);
\draw[color=black] (-0.95,0.7) node {$E$};
\draw[color=black] (-1.586172760495931,1.4895050271909835) node {$\mathcal{D}(E)$};
\draw [fill=black] (-1.8211528070302694,0.7098420149018443) circle (1.0pt);
\draw[color=black] (-1.93,0.81) node {$A$};
\draw [fill=black] (-0.0169319484555866,1.1293623214512563) circle (1.0pt);
\draw[color=black] (0.08,1.2) node {$B$};
\draw [fill=black] (-0.3291931719767479,-0.3111335977204823) circle (1.0pt);
\draw[color=black] (-0.31,-0.43) node {$C$};
\draw [fill=black] (0.,0.) circle (1.0pt);
\draw[color=black] (0.14068848205184692,-0.1) node {$F$};
\draw[line width=0.1mm,-{Stealth},color=black] (0,-1.9) -- (0,1.9);
\draw[line width=0.1mm,-{Stealth},color=black] (-2.4,0) -- (1.9,0);
\end{scriptsize}
\end{tikzpicture}

\begin{tikzpicture}[scale=1.4]
\clip(-3,-2.5) rectangle (3,3);
\draw[line width=1.pt,color=qqwuqq,smooth,samples=200,domain=-1.2807752736935358:-0.5000014544792233] plot(\x,{(-sqrt(-4*(\x)^(4)-4*(\x)^(3)+3*(\x)^(2)+2*(\x)))/(2*(\x)+1)});
\draw[line width=1.pt,color=qqwuqq,smooth,samples=200,domain=3.983828253437852E-6:0.7807745858073388] plot(\x,{(-sqrt(-4*(\x)^(4)-4*(\x)^(3)+3*(\x)^(2)+2*(\x)))/(2*(\x)+1)});
\draw[line width=1.pt,color=qqwuqq,smooth,samples=200,domain=-1.2807752736935358:-0.5000014544792233] plot(\x,{sqrt(-4*(\x)^(4)-4*(\x)^(3)+3*(\x)^(2)+2*(\x))/(2*(\x)+1)});
\draw[line width=1.pt,color=qqwuqq,smooth,samples=200,domain=3.983828253437852E-6:0.7807745858073388] plot(\x,{sqrt(-4*(\x)^(4)-4*(\x)^(3)+3*(\x)^(2)+2*(\x))/(2*(\x)+1)});
\draw [samples=100,rotate around={-90.:(-0.25,0.)},xshift=-0.25cm,yshift=0.cm,line width=1.pt,domain=-4.0:4.0)] plot (\x,{(\x)^2/2/0.5});
\draw [line width=1.pt] (-0.9064666527520012,1.1867735335654033) circle (1.cm);
\draw [line width=1.pt] (-0.9064666527520012,1.1867735335654033) circle (1.cm);
\draw [line width=1.pt] (-1.858998275048948,0.8823338703735866)-- (0.08884065753570088,1.283537979969339);
\draw [line width=1.pt] (0.08884065753570088,1.283537979969339)-- (-0.10520342634853108,0.5884615396913863);
\draw [line width=1.pt] (-0.10520342634853108,0.5884615396913863)-- (-0.9375722616422588,0.18725743009554607);
\draw [line width=1.pt] (-0.9375722616422588,0.18725743009554607)-- (-1.858998275048948,0.8823338703735866);
\begin{scriptsize}
\draw[color=qqwuqq] (0.75,0.5) node {$\mathcal{Q}^4$};
\draw[color=black] (1.5,1) node {$\mathcal{P}(1/2)$};
\draw [fill=black] (0,0) circle (1.0pt);
\draw [fill=black] (-0.9064666527516024,1.1867735335660863) circle (1.0pt);
\draw[color=black] (-0.7582356491321075,1.2597043711793439) node {$E$};
\draw[color=black] (-1.4023811147639076,2.264961688756242) node {$\mathcal{D}(E)$};
\draw [fill=black] (-1.858998275048948,0.8823338703735866) circle (1.0pt);
\draw[color=black] (-2.0270070208311073,0.9181120787988443) node {$A$};
\draw [fill=black] (0.08884065753570088,1.283537979969339) circle (1.0pt);
\draw[color=black] (0.19822276953329257,1.3865815083492437) node {$B$};
\draw [fill=black] (-0.10520342634853108,0.5884615396913863) circle (1.0pt);
\draw[color=black] (-0.20192820154100746,0.7131567033705447) node {$C$};
\draw [fill=black] (-0.9375722616422588,0.18725743009554607) circle (1.0pt);
\draw[color=black] (-0.9241519054312075,0.3910839705546452) node {$D$};
\draw[color=black] (0.14068848205184692,-0.1) node {$F$};
\draw[line width=0.1mm,-{Stealth},color=black] (0,-2.8) -- (0,2.8);
\draw[line width=0.1mm,-{Stealth},color=black] (-2.8,0) -- (2.8,0);
\end{scriptsize}
\end{tikzpicture}
    \caption{Example \ref{ex.2.1}: the algebraic curves $Q^3(1/2,x,y)=0$ and $Q^4(1/2,x,y)=0$.}
    \label{fig.2}
\end{figure}

\section{Triangles and Cyclic Quadrilaterals circumscribing Parabolas from a Confocal Family}\label{sec.3}

\subsection{Triangles }\label{sec.3.1}
For $n=3$, we obtain the following:
\begin{proposition}\label{prop.3.1}
A circle $\mathcal{D}(E)$ and a parabola $\mathcal{P}(p)\in \mathcal{F}$ form a $3$-Poncelet pair $(\mathcal{D}(E),\mathcal{P}(p))$ if and only if 
\begin{equation}\label{eq.3.1.1}
\mathcal{Q}^3_0(x_E,y_E,p):= x_E^2 + y_E^2 - 1=0.
\end{equation}
\end{proposition}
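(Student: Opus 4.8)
The plan is to apply Cayley's theorem (Theorem \ref{thm.2.1}) directly in the lowest odd case. For $n=3$ we are in the situation $n=2m+1$ with $m=1$, so the determinant in \eqref{eq.2.0.1} collapses to the single entry $A_2$, and the Poncelet condition becomes simply $A_2(x_E,y_E,p)=0$. Equivalently, by Proposition \ref{prop.6.1}, the center must lie on $\mathcal{Q}^3=0$; since $3$ has no divisor $k$ with $3\le k<3$, the product in the denominator is empty and $\mathcal{Q}^3$ agrees with $\tilde{\mathcal{Q}}^3=\tfrac{1}{2!}\tilde A_2$ up to a nonzero constant. Either way, the whole statement reduces to computing $\tilde A_2$ and identifying its zero locus.

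Next I would invoke Lemma \ref{lemm.2.1}, which gives $A_2=\frac{1}{2!A_0}\tilde A_2$ with $A_0=\sqrt{\Delta_2}=\sqrt{-p^2}$. Because $p\in\mathbb{R}^*$ we have $\Delta_2=-p^2\neq 0$, so $A_0\neq 0$ and the vanishing of $A_2$ is equivalent to the vanishing of $\tilde A_2$. Using the closed form $\tilde A_2=\frac{1}{4\Delta_2}(4\Delta_2\Theta_1-\Theta_2^2)$ recorded just after Lemma \ref{lemm.2.1}, together with the explicit expressions \eqref{eq.2.1.12}--\eqref{eq.2.1.14} for $\Theta_1,\Theta_2,\Delta_2$, the computation is a short polynomial simplification.

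Carrying out that simplification, the numerator $4\Delta_2\Theta_1-\Theta_2^2$ factors as $-4p^2(x_E^2+y_E^2-1)$, and dividing by $4\Delta_2=-4p^2$ yields the remarkably clean identity
\begin{equation*}
\tilde A_2(x_E,y_E,p)=x_E^2+y_E^2-1,
\end{equation*}
with no residual dependence on $p$. Combining this with the reductions above, $(\mathcal{D}(E),\mathcal{P}(p))$ is a $3$-Poncelet pair if and only if $x_E^2+y_E^2-1=0$, which is exactly \eqref{eq.3.1.1}.

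Since the argument is a direct specialization of Cayley's theorem followed by an explicit evaluation, I do not expect a serious obstacle; the only points requiring care are confirming $A_0\neq 0$ (equivalently the nondegeneracy of $\mathcal{P}(p)$ for $p\neq 0$, already noted after \eqref{eq.2.1.9}) so that passing from $A_2$ to $\tilde A_2$ loses no information, and observing that the $p$-dependence cancels completely. This last cancellation is the real content of the statement: the condition is independent of the chosen parabola, which is precisely the phenomenon underlying the $3$-isoperiodicity discussed later in this section.
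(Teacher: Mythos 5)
Your proposal is correct and follows essentially the same route as the paper: both reduce the $n=3$ Cayley condition to $A_2(x_E,y_E,p)=0$ and then compute, via the formulas \eqref{eq.2.1.11}--\eqref{eq.2.1.14}, that $A_2$ equals $x_E^2+y_E^2-1$ times the nonzero factor $-\sqrt{-p^2}/(2p^2)$. Your intermediate factorization $4\Delta_2\Theta_1-\Theta_2^2=-4p^2(x_E^2+y_E^2-1)$ checks out, so the argument is sound.
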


\begin{proof} By using eq. \eqref{eq.2.1.2} together with eqs. \eqref{eq.2.1.11}-\eqref{eq.2.1.14}, we obtain
\begin{eqnarray}\label{eq.3.1.2}
A_2(x_E,y_E,p)=-\frac{\sqrt{-p^2}}{2p^2}\mathcal{Q}^3_0(x_E,y_E,p).
\end{eqnarray}
Therefore, the Cayley condition, eq. \eqref{eq.2.0.1} for $n=3$, given by $A_2(x_E,y_E,p)=0$ is equivalent to $\mathcal{Q}^3_0(x_E,y_E,p)=0$. 
\end{proof}

Denote
\begin{equation*}
S^1=\{(x,y)\in \mathbb{R}^2~|~ x^2+y^2=1\}.
\end{equation*}
We note that $E\in S^1$ is equivalent to the condition that the circle $\mathcal{D}(E)$ contains the focus $F=(0,0)$ of the confocal family $\mathcal{F}$. This gives
\begin{theorem}\label{thm.3.1}
A circle $\mathcal{D}$ and a parabola $\mathcal{P}$ with the focus at a point $F$ form a $3$-Poncelet pair if and only if $F\in \mathcal{D}$.
\end{theorem}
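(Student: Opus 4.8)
The plan is to read the statement off directly from Proposition~\ref{prop.3.1}, which already carries the analytic content, and then to translate its algebraic conclusion into the coordinate-free incidence $F\in\mathcal{D}$. First I would invoke Proposition~\ref{prop.3.1}: after the normalization $R=1$ fixed in eq.~\eqref{eq.1.0.4}, with the focus placed at the origin $F=(0,0)$, the pair $(\mathcal{D}(E),\mathcal{P}(p))$ is a $3$-Poncelet pair if and only if
\[
\mathcal{Q}^3_0(x_E,y_E,p)=x_E^2+y_E^2-1=0.
\]
The feature I would stress is that this condition does not involve $p$ at all, so whether the pair closes after three steps is decided purely by the position of the center $E$ relative to the focus, independently of which parabola of $\mathcal{F}$ is taken.

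Next I would give the geometric reading of $x_E^2+y_E^2=1$. With $R=1$ the left-hand side is the squared distance from the center $E=(x_E,y_E)$ of $\mathcal{D}(E)$ to the focus $F=(0,0)$, so the condition says exactly that $\operatorname{dist}(E,F)=R$, i.e.\ that $F$ lies on the circle $\mathcal{D}(E)$. This is precisely the equivalence $E\in S^1\iff F\in\mathcal{D}(E)$ recorded just before the theorem, and it converts the algebraic Cayley condition into the desired incidence claim.

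Finally I would dispose of the normalization to reach the coordinate-free statement. Both notions in play—being a $3$-Poncelet pair and the incidence $F\in\mathcal{D}$—are invariant under the similarity transformations of eq.~\eqref{eq.1.0.4} together with the rigid motion that carries an arbitrary parabola of $\mathcal{F}$ into the standard position with $F$ at the origin and axis along the $x$-axis. Since a similarity preserves tangency, inscription, and incidence, the equivalence established in the normalized model transfers verbatim to an arbitrary circle and an arbitrary parabola with focus $F$.

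I do not expect a genuine obstacle, as Proposition~\ref{prop.3.1} bears all the computational weight; the only points requiring care are that the $p$-independence of $\mathcal{Q}^3_0$ is what makes the condition a statement about the pair $(\mathcal{D},F)$ alone, and that the return from normalized coordinates to a general conic is legitimate because similarity preserves all the relevant incidences. As an independent sanity check one could bypass the algebra entirely: by the classical fact that the circumscribed circle of any triangle whose three sides are tangent to a parabola passes through the focus, the forward direction is immediate, while the converse follows from Poncelet's closure theorem once the Cayley condition $A_2=0$ is met.
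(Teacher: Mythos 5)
Your proposal is correct and follows essentially the same route as the paper: the authors likewise place $F$ at the origin, invoke Proposition~\ref{prop.3.1}, and read off that $x_E^2+y_E^2=1$ is exactly the incidence $F\in\mathcal{D}(E)$ (cf.\ the remark preceding the theorem and Remark~\ref{rem.3.1}). Your additional observations---the $p$-independence of $\mathcal{Q}^3_0$ and the similarity-invariance justifying the normalization---are sound elaborations of steps the paper leaves implicit rather than a different argument.
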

\begin{proof}
Take $F=(0,0)$, denote by $E=(x_E,y_E)$ the center of $\mathcal{D}$ and $\mathcal{P}=\mathcal{P}(p)$. The rest now follows from the Proposition \ref{prop.3.1}. See Figure \ref{fig.3}.
\end{proof}

\begin{remark}\label{rem.3.1}
The circle $\mathcal{D}(E)$ contains $F$ if and only if $E$ lies on a circle $\tilde{\mathcal{D}}(F)$, congruent to $\mathcal{D}$. That is, $F \in \mathcal{D}(E) \Leftrightarrow E \in \tilde{\mathcal{D}}(F)$. See Figure \ref{fig.3}.
\end{remark}
\begin{remark}\label{rem.3.2}
Poncelet triangles corresponding to  real $3$-Poncelet pairs $(\mathcal{D}(E),\mathcal{P}(p))$ that satisfy the eq. \eqref{eq.3.1.1} generally exist in $\mathbb{CP}^2$. See Figure \ref{fig.3}.
\end{remark}
\begin{figure}
    \centering
\begin{tikzpicture}[scale=1.5]
\clip(-2.5,-2) rectangle (2.5,2);
\draw [line width=1.0pt,dash pattern=on 3pt off 3pt] (0.,0.) circle (1.cm);
\draw [samples=100,rotate around={-90.:(-0.5,0.)},xshift=-0.5cm,yshift=0.cm,line width=1.0pt,domain=-6.0:6.0)] plot (\x,{(\x)^2/2/1.0});
\draw [line width=1.0pt] (-1.,0.) circle (1.cm);
\draw [line width=1.0pt] (1.,0.) circle (1.cm);
\draw [line width=1.0pt] (-1.,0.) circle (1.cm);
\draw [line width=1.0pt] (1.,0.) circle (1.cm);
\draw [line width=1.0pt] (-1.9558058112196797,0.2939987265936541)-- (-0.6522365335520236,0.9375823011363255);
\draw [line width=1.0pt] (-0.6522365335520236,0.9375823011363255)-- (-0.39195765522829673,-0.7939045956313195);
\draw [line width=1.0pt] (-0.39195765522829673,-0.7939045956313195)-- (-1.9558058112196797,0.2939987265936541);
\begin{scriptsize}
\draw [fill=black] (-1.,0.) circle (1.0pt);
\draw[color=black] (-0.85,0.16703142999011367) node {$E$};
\draw[color=black] (1.7006952858498614,1.934149623493163) node {$\mathcal{P}$};
\draw [fill=black] (1.,0.) circle (1.0pt);
\draw[color=black] (1.1,0.16703142999011367) node {$E'$};
\draw[color=black] (-1.5,1) node {$\mathcal{D}$};
\draw[color=black] (1.1997081691350782,1.141679093417053) node {$\mathcal{D}'$};
\draw [fill=black] (0.,0.) circle (1.0pt);
\draw[color=black] (0.1,0.16703142999011367) node {$F$};
\draw[color=black] (-0.1,1.14) node {$\tilde{\mathcal{D}}$};
\draw [fill=black] (-1.9558058112196797,0.2939987265936541) circle (1.0pt);
\draw [fill=black] (-0.6522365335520236,0.9375823011363255) circle (1.0pt);
\draw [fill=black] (-0.39195765522829673,-0.7939045956313195) circle (1.0pt);
\end{scriptsize}
\end{tikzpicture}
    \caption{Both $\mathcal{D}$ and $\mathcal{D}'$ contain the focus $F$ of $\mathcal{P}$. Two 3-Poncelet pairs $(\mathcal{D},\mathcal{P})$ and $(\mathcal{D}',\mathcal{P})$ with real and imaginary 3-Poncelet triangles, respectively.}
    \label{fig.3}
\end{figure}
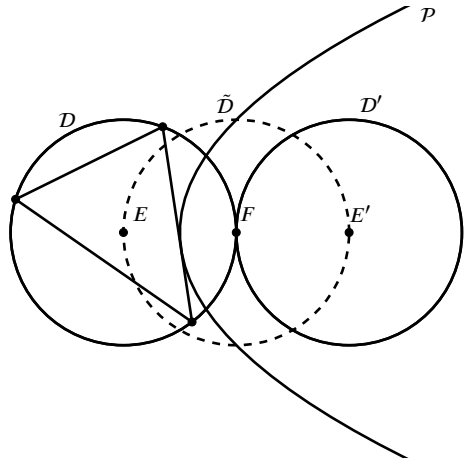
Also, we observe that the equation \eqref{eq.3.1.1} is satisfied for all $p \in \mathbb{R}^*$ if and only if $E\in S^1$. \noindent
This proves the following:
\begin{theorem}\label{thm.3.2}
The confocal family of parabolas $\mathcal{F}$ is $3$-isoperiodic with $\mathcal{D}(E)$ if and only if \newline $F\in \mathcal{D}(E)$.
\end{theorem}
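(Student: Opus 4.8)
The plan is to reduce the claim directly to Proposition \ref{prop.3.1}, which characterizes each individual $3$-Poncelet pair by a single polynomial equation, and then to exploit the fact that this equation does not involve the parameter $p$. By Definition \ref{def.1.2}, the family $\mathcal{F}$ is $3$-isoperiodic with $\mathcal{D}(E)$ precisely when $(\mathcal{D}(E),\mathcal{P}(p))$ is a $3$-Poncelet pair for every $p\in\mathbb{R}^*$. For a fixed $p$, Proposition \ref{prop.3.1} states that this holds if and only if $\mathcal{Q}^3_0(x_E,y_E,p)=x_E^2+y_E^2-1=0$.

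The key observation is that the left-hand side of \eqref{eq.3.1.1} is independent of $p$. Consequently the universally quantified requirement ``$x_E^2+y_E^2-1=0$ for all $p\in\mathbb{R}^*$'' collapses to the single equation $x_E^2+y_E^2=1$, i.e. to $E\in S^1$. Hence $\mathcal{F}$ is $3$-isoperiodic with $\mathcal{D}(E)$ if and only if $E\in S^1$.

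It then remains to restate $E\in S^1$ geometrically. Since $F=(0,0)$ and $\mathcal{D}(E)$ is the unit circle centered at $E$, the focus $F$ lies on $\mathcal{D}(E)$ exactly when $|EF|=\sqrt{x_E^2+y_E^2}=1$, that is exactly when $E\in S^1$. Chaining the two equivalences yields the asserted ``if and only if.''

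Since Proposition \ref{prop.3.1} already carries the computational weight --- the explicit evaluation of $A_2$ through the recursion of Lemma \ref{lemm.2.1} --- there is no genuine analytic obstacle here. The whole point is that for $n=3$ the Cayley determinant degenerates to a $p$-free quadratic, so that ``$3$-isoperiodic'' and ``a single $3$-Poncelet instance'' become interchangeable; the only step requiring care is the logic of the quantifier over $p$, namely making explicit that it is precisely the $p$-independence of \eqref{eq.3.1.1}, rather than a coincidence at special values of $p$, that forces the equivalence.
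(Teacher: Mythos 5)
Your proposal is correct and is essentially the paper's own argument: both reduce the claim to Proposition \ref{prop.3.1} and observe that the Cayley condition $x_E^2+y_E^2-1=0$ is independent of $p$, so that holding for one $p$ is equivalent to holding for all $p\in\mathbb{R}^*$, which in turn is equivalent to $F\in\mathcal{D}(E)$. Your explicit emphasis on the quantifier over $p$ is a slightly more careful phrasing of the same step the paper states in the sentence immediately preceding the theorem.
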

\begin{proof}
 With $F=(0,0)$ and $E=(x_E,y_E)$, $F\in \mathcal{D}(E)$ if and only if $x_E^2+y_E^2=1$ which is the necessary and sufficient condition for $(\mathcal{D}(E),\mathcal{P})$ to be a 3-Poncelet pair by the Proposition \ref{prop.3.1}. Since the equation holds true for every $p\in \mathbb{R}^*$, the confocal family $\mathcal{F}$ is 3-isoperiodic with $\mathcal{D}(E)$. See Figure \ref{fig.4}.
\end{proof}

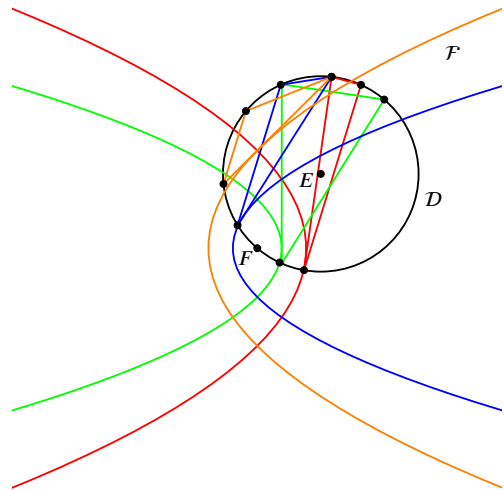
\begin{figure}
\centering
\definecolor{qqqqff}{rgb}{0.,0.,1.}
\definecolor{xdxdff}{rgb}{0.49019607843137253,0.49019607843137253,1.}
\definecolor{yqyqyq}{rgb}{0.5019607843137255,0.5019607843137255,0.5019607843137255}
\definecolor{ffqqqq}{rgb}{1.,0.,0.}
\definecolor{xdxdff}{rgb}{0.49019607843137253,0.49019607843137253,1.}
\definecolor{yqyqyq}{rgb}{0.5019607843137255,0.5019607843137255,0.5019607843137255}
\begin{tikzpicture}[scale=1.3]
\clip(-2.5,-2.5) rectangle (2.5,2.5);
\draw [line width=0.7pt] (0.6494995268592774,0.7603619957683008) circle (1.cm);
\draw [samples=100,rotate around={-270.:(0.5,0.)},xshift=0.5cm,yshift=0.cm,line width=0.7pt,color=ffqqqq,domain=-6.0:6.0)] plot (\x,{(\x)^2/2/1.0});
\draw [line width=0.7pt,color=ffqqqq] (0.47810607729852256,-0.22484066635135924)-- (1.0605984402760933,1.6719527391326205);
\draw [line width=0.7pt,color=ffqqqq] (0.7602945361439927,1.7542052761287406)-- (0.47810607729852256,-0.22484066635135924);
\draw [line width=0.7pt,color=ffqqqq] (0.7602945361439927,1.7542052761287406)-- (1.0605984402760933,1.6719527391326205);
\draw [samples=100,rotate around={-270.:(0.25,0.)},xshift=0.25cm,yshift=0.cm,line width=0.7pt,color=green,domain=-3.0:3.0)] plot (\x,{(\x)^2/2/0.5});
\draw [line width=0.7pt,color=green] (0.24992573762915582,-0.1563390388990996)-- (0.2507974620789881,1.6774425067050092);
\draw [line width=0.7pt,color=green] (1.2982758540102473,1.521341152728922)-- (0.24992573762915582,-0.1563390388990996);
\draw [line width=0.7pt,color=green] (1.2982758540102473,1.521341152728922)-- (0.2507974620789881,1.6774425067050092);
\draw [samples=100,rotate around={-90.:(-0.25,0.)},xshift=-0.25cm,yshift=0.cm,line width=0.7pt,color=blue,domain=-4.0:4.0)] plot (\x,{(\x)^2/2/0.5});
\draw [line width=0.7pt,color=blue] (0.24010408933620908,1.6727190397676857)-- (-0.20139957176193027,0.23503283803420325);
\draw [line width=0.7pt,color=blue] (0.7602945361439927,1.7542052761287406)-- (0.24010408933620908,1.6727190397676857);
\draw [line width=0.7pt,color=blue] (0.7602945361439927,1.7542052761287406)-- (-0.20139957176193027,0.23503283803420325);
\draw [samples=100,rotate around={-90.:(-0.5,0.)},xshift=-0.5cm,yshift=0.cm,line width=0.7pt,color=orange,domain=-6.0:6.0)] plot (\x,{(\x)^2/2/1.0});
\draw [line width=0.7pt,color=orange] (-0.11608848744035058,1.403693164266107)-- (-0.34520699498548285,0.6576053152093247);
\draw [line width=0.7pt,color=orange] (0.7602945361439927,1.7542052761287406)-- (-0.11608848744035058,1.403693164266107);
\draw [line width=0.7pt,color=orange] (0.7602945361439927,1.7542052761287406)-- (-0.34520699498548285,0.6576053152093247);
\begin{scriptsize}
\draw [fill=black] (0,0) circle (1.pt);
\draw [fill=black] (0.649499526859468,0.760361995768678) circle (1.pt);
\draw [fill=black] (0.7602945361439927,1.7542052761287406) circle (1.pt);
\draw [fill=black] (0.7602945361439927,1.7542052761287406) circle (1.pt);
\draw [fill=black] (0.47810607729852256,-0.22484066635135924) circle (1.pt);
\draw [fill=black] (1.0605984402760933,1.6719527391326205) circle (1.pt);
\draw [fill=black] (0.22852305032599546,-0.1467095594056133) circle (1.pt);
\draw [fill=black] (0.24010408933620908,1.6727190397676857) circle (1.pt);
\draw [fill=black] (-0.20139957176193027,0.23503283803420325) circle (1.pt);
\draw [fill=black] (-0.11608848744035058,1.403693164266107) circle (1.pt);
\draw [fill=black] (-0.34520699498548285,0.6576053152093247) circle (1.pt);
\draw [fill=black] (1.2982758540102473,1.521341152728922) circle (1.pt);
\draw[color=black] (1.8,0.5) node {$\mathcal{D}$};
\draw[color=black] (2,2) node {$\mathcal{F}$};
\draw[color=black] (0.5,0.7) node {$E$};
\draw[color=black] (-0.12,-0.1) node {$F$};
\end{scriptsize}
\end{tikzpicture}
\caption{An illustration of $3$-isoperiodicity of $\mathcal{F}$ with $\mathcal{D}(E)$.}\label{fig.4}
\end{figure}

\begin{corollary}\label{cor.3.1} A circle and a parabola do not form an $n$-Poncelet pair $(\mathcal{D}(E),\mathcal{P})$ for $n\neq 3$ and for any $\mathcal{P}\in \mathcal{F}$ if $F \in \mathcal{D}(E)$. 
\end{corollary}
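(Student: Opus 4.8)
The plan is to reduce the statement to the uniqueness of the Poncelet period and then to apply the $n=3$ analysis already carried out. By Theorem~\ref{thm.3.1} (equivalently Proposition~\ref{prop.3.1}), the hypothesis $F\in\mathcal{D}(E)$ is equivalent to $x_E^2+y_E^2=1$, that is, to $\mathcal{Q}^3_0(x_E,y_E,p)=0$, and this holds for every $p\in\mathbb{R}^*$. Hence for each parabola $\mathcal{P}\in\mathcal{F}$ the pair $(\mathcal{D}(E),\mathcal{P})$ already carries a genuine Poncelet triangle. I would then show that this single closure rules out every other one.

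To do so I would invoke the group-theoretic description of Poncelet's porism (see \cite{Griffiths-Harris1978,Dragovic-Radnovic2011}): for the pencil $\lambda\mathcal{D}(E)+\mathcal{P}$ the Poncelet map is translation by a fixed element $\tau$ of the elliptic curve $w^2=f(\lambda)=\det(\lambda\mathcal{D}(E)+\mathcal{P})$, and a genuine $n$-Poncelet polygon exists if and only if $\tau$ has exact order $n$. Since a genuine triangle exists and the two conics are distinct and nondegenerate, $\tau$ is a nonzero $3$-torsion point, hence of exact order $3$; an element of order $3$ in an abelian group has order $n$ for no $n\neq 3$, so no genuine $n$-Poncelet polygon can exist for $n\neq 3$.

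The delicate point — and the step I expect to require the most care — is the difference between the raw Cayley determinant vanishing and the existence of a \emph{genuine} $n$-gon. For $n$ not divisible by $3$ one has $n\tau\neq 0$, so the Cayley determinant $\tilde{\mathcal{Q}}^n$ is itself nonzero and nothing closes. For a multiple of $3$ with $n>3$ one has $n\tau=0$, so $\tilde{\mathcal{Q}}^n$ does vanish on $x_E^2+y_E^2=1$; however the closing polygon is then the triangle traversed $n/3$ times, which by the convention fixed in the remark following Theorem~\ref{thm.2.1} is not an $n$-gon. Algebraically this is precisely accounted for by Proposition~\ref{prop.6.1}: since $3\mid n$, the factor $\tilde{\mathcal{Q}}^3$ — which by \eqref{eq.3.1.2} vanishes exactly on $x_E^2+y_E^2=1$ because $A_2\propto\mathcal{Q}^3_0$ — divides the denominator $\prod_{k\mid n}\tilde{\mathcal{Q}}^k$, so the reduced polynomial $\mathcal{Q}^n=\tilde{\mathcal{Q}}^n/\prod_{k\mid n}\tilde{\mathcal{Q}}^k$ does not vanish there. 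Thus genuine $n$-closure fails in both cases, which is the assertion; the group argument is what guarantees the nonvanishing of $\mathcal{Q}^n$ that a purely algebraic route would otherwise have to verify directly.
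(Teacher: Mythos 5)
Your proof is correct and follows essentially the same route as the paper: both reduce the claim to the fact that $F\in\mathcal{D}(E)$ forces $(\mathcal{D}(E),\mathcal{P})$ to be a $3$-Poncelet pair (Proposition~\ref{prop.3.1}/Theorem~\ref{thm.3.2}) and then conclude that no genuine $n$-gon with $n\neq 3$ can close. The only difference is that you make explicit, via the order of the Poncelet translation $\tau$ and the divisibility discussion of $\tilde{\mathcal{Q}}^n$ versus $\mathcal{Q}^n$, the uniqueness-of-period step that the paper's one-line contradiction leaves implicit.
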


\begin{proof}
Let us assume that $F \in \mathcal{D}(E)$ and $(\mathcal{D}(E),\mathcal{P})$ be an $n$-Poncelet pair for $n\neq 3$ and for some $\mathcal{P}\in \mathcal{F}$. Then by Theorem \ref{thm.3.2}, $(\mathcal{D}(E),\mathcal{P})$ is a $3$-Poncelet pair for every $\mathcal{P}\in \mathcal{F}$. But this contradicts our hypothesis that $n\neq 3$.
\end{proof}

\subsubsection{Geometric Approach to $3$-isoperiodicity}
We will use the following well-known properties of a parabola to prove the Theorem \ref{thm.3.3}.

\begin{lemma}[Two Properties of Parabola]\label{lemm.3.1}
Let $X$ be an arbitrary point on a parabola $\mathcal{P}$.
\begin{itemize}
\item[(i)] [Defining property]
There exist a point $F$ (the focus) and a line $\ell$ (the directrix) such that $X$ is equidistant from $F$ and $\ell$, that is, $\mathrm{dist}(X,F)=\mathrm{dist}(X,\ell)$.
\item[(ii)] [Focal property]  The tangent to the parabola $\mathcal{P}$ at $X$ is the bisector of the angle between $FX$ and the perpendicular from $X$ to $\ell$. 
\end{itemize}
\end{lemma}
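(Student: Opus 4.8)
The plan is to treat the two items by different means. Item (i) is essentially built into the algebraic normal form \eqref{eq.1.0.2}, so I would dispatch it with a single distance computation. For item (ii) I would invoke the classical synthetic characterization of the tangent as the perpendicular bisector of the segment joining the focus $F$ to the foot of the perpendicular from $X$ onto the directrix $\ell$.

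For (i), take $X=(x_0,y_0)\in\mathcal{P}(p)$, so that $y_0^2=2px_0+p^2$. With $F=(0,0)$ and $\ell:x=-p$ one computes
\[
\mathrm{dist}(X,F)^2=x_0^2+y_0^2=x_0^2+2px_0+p^2=(x_0+p)^2=\mathrm{dist}(X,\ell)^2,
\]
since the distance from $(x_0,y_0)$ to the vertical line $x=-p$ equals $|x_0+p|$. Taking nonnegative square roots gives $\mathrm{dist}(X,F)=\mathrm{dist}(X,\ell)$, which is (i); it also records the identity $\mathrm{dist}(X,F)=|x_0+p|$ that I will reuse below.

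For (ii), let $X'=(-p,y_0)$ be the foot of the perpendicular from $X$ onto $\ell$, so that $XX'$ is horizontal, i.e.\ parallel to the axis of symmetry. By (i), $|XF|=|XX'|$, hence the triangle $FXX'$ is isosceles with apex $X$. I claim the tangent $t$ to $\mathcal{P}(p)$ at $X$ is precisely the perpendicular bisector of the base $FX'$. Indeed, every $Y\in t$ satisfies $|YF|=|YX'|\ge\mathrm{dist}(Y,\ell)$, with equality only when $YX'$ is horizontal; since $p\neq 0$ the segment $FX'$ is not vertical, so $t$ is not horizontal and $X$ is the only point of $t$ whose ordinate equals that of $X'$. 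Hence $|YF|>\mathrm{dist}(Y,\ell)$ for every $Y\in t\setminus\{X\}$, so $t$ meets $\mathcal{P}(p)$ only at $X$ and is therefore the tangent there. Finally, in an isosceles triangle the perpendicular bisector of the base passes through the apex and bisects the apex angle, so $t$ bisects $\angle FXX'$; that is, the tangent bisects the angle between the focal segment $XF$ and the perpendicular $XX'$ from $X$ to $\ell$, which is exactly (ii).

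The only genuinely delicate step is the tangency claim in (ii): guaranteeing that the perpendicular bisector $t$ touches $\mathcal{P}(p)$ solely at $X$ rather than crossing it. The argument rests on the elementary bound $\mathrm{dist}(Y,\ell)\le|YX'|$, on the fact that equality forces $Y$ to share the ordinate of $X'$, and on the use of $p\neq 0$ to ensure that $t$ meets the corresponding horizontal line in the single point $X$ (this also covers the vertex, where $t$ is vertical). Should this synthetic bookkeeping become awkward, I would verify (ii) analytically instead: implicit differentiation of \eqref{eq.1.0.2} yields the tangent direction $(y_0,p)$ at $X$, and a direct comparison of normalized dot products shows that this direction makes equal angles with the ray toward $F$ and with the perpendicular direction toward $\ell$, the two cosines coinciding by virtue of the identity $\mathrm{dist}(X,F)=|x_0+p|$ from (i).
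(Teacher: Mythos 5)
The paper does not prove Lemma \ref{lemm.3.1} at all: it is stated as a pair of ``well-known properties of a parabola'' and used as a black box in the geometric proofs of Theorems \ref{thm.3.3} and \ref{thm.3.6}. So there is no in-paper argument to compare against; what matters is whether your self-contained proof is sound, and it is. Part (i) is a one-line computation from the normal form $y^2=2px+p^2$ with $F=(0,0)$ and $\ell:x=-p$, exactly as you do it. For part (ii) you use the classical characterization of the tangent at $X$ as the perpendicular bisector of $FX'$, where $X'$ is the foot of the perpendicular from $X$ to $\ell$; the bisection of $\angle FXX'$ then follows from the isosceles triangle $FXX'$.

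You correctly identified the only delicate point, namely why the perpendicular bisector $t$ of $FX'$ is the tangent, and your handling of it is adequate: every $Y\in t\setminus\{X\}$ satisfies $\operatorname{dist}(Y,F)=|YX'|>\operatorname{dist}(Y,\ell)$ because $t$, being non-horizontal (here $p\neq 0$ is used, since $FX'$ is vertical only if $p=0$), meets the horizontal line through $X'$ only at $X$. Hence $t$ lies in the open region $\operatorname{dist}(\cdot,F)>\operatorname{dist}(\cdot,\ell)$ except at $X$, so it meets $\mathcal P$ only at $X$; since $t$ is not parallel to the axis, this single intersection is a double point and $t$ is the tangent. (One could equally say $t$ is a supporting line of the convex region $\operatorname{dist}(\cdot,F)\le\operatorname{dist}(\cdot,\ell)$ at the smooth boundary point $X$.) The degenerate case where $X$ is the vertex, so that $F$, $X$, $X'$ are collinear and the ``angle'' is straight, is covered by your parenthetical remark, and the analytic fallback via the tangent direction $(y_0,p)$ is also correct. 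No gaps.
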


\begin{theorem}\label{thm.3.3}
Given a parabola $\mathcal{P}$ with the focus $F$. Let $\mathcal{D}$ be the circle which contains $F$ with the center $E$ on the axis of $\mathcal{P}$. Then, $(\mathcal{D}, \mathcal{P})$ is a $3$-Poncelet pair. Moreover, $(\mathcal{D}, \mathcal{P'})$ is a $3$-Poncelet pair if $\mathcal{P}'$is confocal with $\mathcal{P}$. 
\end{theorem}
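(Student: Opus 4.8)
The plan is to exhibit a single triangle inscribed in $\mathcal{D}$ and circumscribed about $\mathcal{P}$; by Poncelet's closure theorem this already suffices to conclude that $(\mathcal{D},\mathcal{P})$ is a $3$-Poncelet pair. Since the center $E$ lies on the axis of $\mathcal{P}$, I would look for a triangle symmetric with respect to the axis. Placing $F$ at the origin with the axis horizontal, the axis passes through $E$ and is therefore a diameter of $\mathcal{D}$; it meets $\mathcal{D}$ at $F$ and at the antipodal point $A$, so $FA$ is a diameter. I take as one side the tangent line $\ell_V$ to $\mathcal{P}$ at its vertex $V$ (the unique tangent perpendicular to the axis), and as the other two sides the two tangents drawn from $A$ to $\mathcal{P}$. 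By the symmetry of $\mathcal{P}$ about the axis and of $A$ on the axis, these two tangents are mirror images; let $B$ and $C$ denote the points where they meet $\ell_V$. The triangle $ABC$ is circumscribed about $\mathcal{P}$ by construction, so the only thing left to verify is that $B,C\in\mathcal{D}$.

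The key step is a pedal property of the parabola, which I would derive from Lemma \ref{lemm.3.1}: the foot of the perpendicular from the focus $F$ to any tangent line of $\mathcal{P}$ lies on $\ell_V$. Indeed, if the tangent touches $\mathcal{P}$ at $X$, then by the focal property (ii) together with the defining property (i) the reflection of $F$ across that tangent lands on the directrix $\ell$ (reflection across the angle bisector of the angle at $X$ between $FX$ and the axis-parallel ray sends $F$ to the point of $\ell$ at distance $\operatorname{dist}(X,\ell)=|XF|$ from $X$). The foot of the perpendicular is the midpoint of $F$ and its reflection, and since $V$ is equidistant from $F$ and $\ell$, this midpoint lies on the vertex tangent $\ell_V$. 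Applying this to the tangent line through $A$: its perpendicular foot from $F$ lies on $\ell_V$ and on the line $AB$, hence coincides with $B=AB\cap\ell_V$. Therefore $FB\perp AB$, i.e. $\angle FBA=90^\circ$. As $FA$ is a diameter of $\mathcal{D}$, the converse of the inscribed-angle (Thales) theorem forces $B\in\mathcal{D}$, and by symmetry $C\in\mathcal{D}$. Thus $ABC$ is inscribed in $\mathcal{D}$ and circumscribed about $\mathcal{P}$, as required.

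For the ``moreover'' clause, I would observe that the entire construction depends on $\mathcal{P}$ only through its focus $F$ and its axis, both of which are shared by every confocal parabola $\mathcal{P}'$. Running the identical argument for $\mathcal{P}'$, now using the vertex tangent of $\mathcal{P}'$ and the same antipode $A$, produces a triangle inscribed in $\mathcal{D}$ and circumscribed about $\mathcal{P}'$, so $(\mathcal{D},\mathcal{P}')$ is again a $3$-Poncelet pair; this recovers geometrically the $3$-isoperiodicity already established analytically in Theorem \ref{thm.3.2}.

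I expect the main obstacle to be twofold. First, the pedal property must be extracted carefully from the two statements of Lemma \ref{lemm.3.1}: the reflection-to-directrix step and the identification of the midpoint locus with $\ell_V$ both need to be spelled out, as they are the crux that converts ``tangent'' into the right-angle condition $\angle FBA=90^\circ$. Second, one must watch non-degeneracy: the real triangle exists only when $A$ is genuinely exterior to $\mathcal{P}$, so that the two tangents from $A$ are real and meet $\ell_V$ in distinct points $B\neq C$ with $AB\neq\ell_V$. When $A$ is not exterior, the same incidence relations persist but the triangle is realized in $\mathbb{CP}^2$ rather than $\mathbb{R}^2$; this is precisely the distinction noted in Remark \ref{rem.3.2}, and it remains fully consistent with the projective formulation of Cayley's condition used in Proposition \ref{prop.3.1}.
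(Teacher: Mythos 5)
Your proposal is correct, and it verifies the key incidence by a genuinely different mechanism than the paper, even though the triangle you construct is the same one. Like the paper, you exhibit the single axis-symmetric triangle with one vertex at the antipode $A$ of $F$, the two tangents from $A$ as two sides, and the vertex tangent as the third side, and then invoke Poncelet's closure theorem. The paper's proof defines $B$ as the second intersection of $AT$ with $\mathcal{D}$ and shows that $BD$ is perpendicular to the axis by an angle chase: the focal property of Lemma \ref{lemm.3.1} gives $\angle ATF=\angle ATH$, the isosceles triangle $EAB$ then forces $EB\parallel FT$, and a $1{:}2$ similarity $\triangle EDB\sim\triangle FGT$ transports the right angle at $G$ to the vertex $D$. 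You instead define $B$ as the intersection of $AT$ with the vertex tangent and show $B\in\mathcal{D}$ via the pedal property of the parabola (the foot of the perpendicular from $F$ to any tangent lies on the vertex tangent, obtained by reflecting $F$ across the tangent onto the directrix) combined with Thales' theorem on the diameter $FA$. Your derivation of the pedal property from the two parts of Lemma \ref{lemm.3.1} is sound, and your route is somewhat shorter and isolates a reusable classical lemma, whereas the paper's argument stays closer to the raw angle relations. Your closing caution about non-degeneracy — the real triangle exists only when $A$ is exterior to $\mathcal{P}$, and otherwise the configuration lives in $\mathbb{CP}^2$ — correctly matches Remark \ref{rem.3.2} and is in fact slightly more explicit than the paper's proof, which tacitly assumes the intersection point $A$ outside $\mathcal{P}$ exists.
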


\begin{proof}
Let $D$ be the vertex and $\ell$ be the directrix of the parabola $\mathcal{P}$, and $|DF|=x$ as shown in the Figure \ref{fig.5}. Let $E$ be the center of the circle $\mathcal{D}(E)$ and $F\in \mathcal{D}(E)$. Denote $|DE|=y$ and let $A$ be the intersection of the line $DF$ with $\mathcal{D}$, outside $\mathcal{P}$. Denote by $T,T'$, the points of contact of the tangents through $A$ to $\mathcal{P}$. Denote by $B$ the intersection of $\mathcal{D}$ with $AT$. Then 
\begin{equation*}
    |EA|=|EB|=x+y.
\end{equation*}
We want to show that $BD$ is tangent to $\mathcal{P}$ at $D$, equivalently, $BD \perp DF$. By the focal property (ii) from Lemma \ref{lemm.3.1}, $\angle ATF=\angle ATH$. Since $EA \cong EB$, we have $\angle EAB = \angle EBA$. Also, $\angle EAB = \angle ATH$ (alternate angles.) Thus, $\angle FTA=\angle EBA$. Hence, $EB \parallel FT$. 

From the similarity of $\triangle AEB$ and $\triangle AFT$, we get $|FT|=2(x+y)$.

Let $G$ be the orthogonal projection from $F$ to $TT'$. Since $\mathrm{dist}(F,\ell)=2x$, we, thus, have $|FG|=2y$. From there, it follows that $\triangle EDB \sim \triangle FGT$. Since $\triangle FGT$ is a right triangle, we get $\angle EDB=\angle FGT=90^\circ$. This proves that $BD$ is tangent to $\mathcal{P}$ at $D$.

Denote by $C$ the intersection of $\mathcal{D}$ with $AT'$, then by symmetry, $CD$ is tangent to $\mathcal{P}$ at $D$. Thus, $B,D,C$ are collinear. We get that $\triangle ABC$ is a Poncelet triangle inscribed in $\mathcal{D}$ and circumscribed about $\mathcal{P}$.

By varying $x$, one gets different parabolas $\mathcal{P}'$, from the confocal family $\mathcal{F}$ that contains $\mathcal{P}$, with the same property.
\end{proof}

\begin{figure}
    \centering
\definecolor{uuuuuu}{rgb}{0.26666666666666666,0.26666666666666666,0.26666666666666666}
\begin{tikzpicture}[scale=1.5]
\clip(-2.5,-2) rectangle (2.5,2);
\draw [line width=1.pt] (-1.,0.) circle (1.cm);
\draw [samples=100,rotate around={-90.:(-0.25,0.)},xshift=-0.25cm,yshift=0.cm,line width=1.pt,domain=-3.0:3.0)] plot (\x,{(\x)^2/2/0.5});
\draw [line width=1.pt,dash pattern=on 4pt off 4pt,domain=-2.8917829569443203:3.950127171467037] plot(\x,{(-2.6457513110645907-1.3228756555322954*\x)/3.5});
\draw [line width=1.pt,dash pattern=on 4pt off 4pt,domain=-2.8917829569443203:3.950127171467037] plot(\x,{(--2.6457513110645903--1.3228756555322951*\x)/3.5});
\draw [line width=1.pt] (-0.5,-2.0495640515345803) -- (-0.5,3.0414635637006433);
\draw [line width=1.pt] (-0.25,0.6614378277661477)-- (-0.25,0.);
\draw [line width=1.pt] (0.,0.)-- (1.5,1.3228756555322951);
\draw [line width=1.pt,domain=-2.8917829569443203:3.950127171467037] plot(\x,{(--1.3228756555322951-0.*\x)/1.});
\draw [line width=1.pt] (-0.25,-0.6614378277661477)-- (-0.25,0.);
\draw [line width=1.pt,domain=-2.8917829569443203:3.950127171467037] plot(\x,{(-0.-0.*\x)/1.});
\draw [line width=1.pt] (-1.,0.)-- (-0.25,0.6614378277661477);
\draw [line width=1.pt] (1.5,1.3228756555322951)-- (1.5,0.);
\draw [line width=1.pt] (1.5,0.)-- (1.5,-1.3228756555322954);
\begin{scriptsize}
\draw[color=black] (-1.47,1) node {$\mathcal{D}$};
\draw[color=black] (0.3877546735048245,0.65) node {$\mathcal{P}$};
\draw [fill=black] (-1.,0.) circle (1.0pt);
\draw[color=black] (-1,0.2) node {$E$};
\draw [fill=black] (-2.,0.) circle (1.0pt);
\draw[color=black] (-2.09,0.13) node {$A$};
\draw [fill=black] (1.5,1.3228756555322951) circle (1.0pt);
\draw[color=black] (1.5157270618009882,1.506074282915417) node {$T$};
\draw [fill=black] (1.5,-1.3228756555322954) circle (1.0pt);
\draw[color=black] (1.5628662063864995,-1.5) node {$T'$};
\draw [fill=black] (-0.25,0.) circle (1.0pt);
\draw[color=black] (-0.365,0.1) node {$D$};
\draw [fill=black] (-0.25,0.6614378277661477) circle (1.0pt);
\draw[color=black] (-0.20485171557017495,0.8) node {$B$};
\draw[color=black] (-0.38667413040000437,1.9) node {$\ell$};
\draw [fill=black] (0.,0.) circle (1.0pt);
\draw[color=black] (0.1,-0.08) node {$F$};
\draw [fill=uuuuuu] (-0.5,1.3228756555322951) circle (1.0pt);
\draw[color=uuuuuu] (-0.35,1.4522009748176898) node {$H$};
\draw [fill=uuuuuu] (-0.25,-0.6614378277661477) circle (1.0pt);
\draw[color=uuuuuu] (-0.26545918718011813,-0.8) node {$C$};
\draw [fill=uuuuuu] (1.5,0.) circle (1.0pt);
\draw[color=uuuuuu] (1.7,0.12557076291115674) node {$G$};
\draw[color=black] (-0.67,-0.14) node {$y$};
\draw[color=black] (-0.12,-0.14) node {$x$};
\end{scriptsize}
\end{tikzpicture}
    \caption{A geometric proof of $3$-isoperiodicity.}
    \label{fig.5}
\end{figure}

\subsection{Quadrilaterals}\label{sec.3.2}
Using Cayley's Theorem for $n=4$, we get:
\begin{proposition} \label{prop.3.2} A circle $\mathcal{D}(E)$ and a parabola $\mathcal{P}(p)$ form a $4$-Poncelet pair $(\mathcal{D}(E),\mathcal{P}(p))$ if and only if $E\in \mathbb{R}^2\setminus S^1$ and
\begin{equation}\label{eq.3.2.1}
\mathcal{Q}^4_{1}(x_E,y_E,p):=(x_E^2 + y_E^2)p + x_E(x_E^2+y_E^2-1)=0.
\end{equation}
\end{proposition}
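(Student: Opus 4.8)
The plan is to invoke Cayley's theorem (Theorem~\ref{thm.2.1}) in the even case $n=2m$ with $m=2$. For $n=4$ the Cayley matrix in \eqref{eq.2.0.2} collapses to the single entry $A_3$, so the pair $(\mathcal{D}(E),\mathcal{P}(p))$ is a $4$-Poncelet pair if and only if $A_3(x_E,y_E,p)=0$. Since $p\in\mathbb{R}^*$, the leading coefficient $A_0=\sqrt{\Delta_2}=\sqrt{-p^2}$ is nonzero, and by the recursion of Lemma~\ref{lemm.2.1} we have $A_3=\frac{1}{3!\,A_0}\tilde A_3$; hence the closure condition is equivalent to $\tilde A_3(x_E,y_E,p)=0$.

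Next I would substitute the explicit pencil coefficients \eqref{eq.2.1.11}--\eqref{eq.2.1.14}, namely $\Delta_1=-1$, $\Delta_2=-p^2$, $\Theta_1=-p^2-2px_E+y_E^2-1$ and $\Theta_2=-2p(p+x_E)$, into the closed form $\tilde A_3=\frac{3}{8\Delta_2^2}\bigl(8\Delta_1\Delta_2^2-4\Delta_2\Theta_1\Theta_2+\Theta_2^3\bigr)$ recorded just after Lemma~\ref{lemm.2.1}. The key step is to factor the resulting polynomial. Pulling out the common factor $-8p^3$ from $8\Delta_1\Delta_2^2-4\Delta_2\Theta_1\Theta_2+\Theta_2^3$ and expanding in the variable $u=p+x_E$ (so that $\Theta_2=-2pu$ and the term $(p+x_E)^3$ is transparent), the cubic and quadratic monomials in $p$ cancel, leaving
\[
\tilde A_3(x_E,y_E,p)=-\frac{3}{p}\Bigl((x_E^2+y_E^2)p+x_E(x_E^2+y_E^2-1)\Bigr)=-\frac{3}{p}\,\mathcal{Q}^4_{1}(x_E,y_E,p).
\]
Since $p\neq 0$, this yields $A_3=0\iff \mathcal{Q}^4_1=0$, which is exactly the asserted equation \eqref{eq.3.2.1}.

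It remains to account for the clause $E\in\mathbb{R}^2\setminus S^1$. I would observe that this restriction is in fact forced by the equation itself: evaluating $\mathcal{Q}^4_1$ on $S^1$, where $x_E^2+y_E^2=1$, gives $\mathcal{Q}^4_1=p\neq0$, so no point of $S^1$ can satisfy \eqref{eq.3.2.1}. Thus any solution of $\mathcal{Q}^4_1=0$ with $p\in\mathbb{R}^*$ lies automatically off $S^1$, and the stated condition is equivalent to $\mathcal{Q}^4_1=0$ alone. As a consistency check, for $E\in S^1$ the pair is a $3$-Poncelet pair (Proposition~\ref{prop.3.1}), hence by Corollary~\ref{cor.3.1} never a $4$-Poncelet pair, which matches $\tilde A_3=-3\neq0$ there.

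The only genuine obstacle is the algebraic factorization in the second step: one must verify that every monomial of degree $\ge 2$ in $p$ inside the bracketed cubic cancels, so that precisely the linear-in-$p$ expression $\mathcal{Q}^4_1$ survives. This is a finite but slightly delicate hand computation; organizing it through the substitution $u=p+x_E$ is what makes the cancellation visible and keeps the bookkeeping manageable.
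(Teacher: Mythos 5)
Your proposal is correct and follows essentially the same route as the paper: both reduce the $n=4$ Cayley condition to $A_3=0$ and show that $A_3$ is a nonzero multiple of $\mathcal{Q}^4_1$ (your factorization via $u=p+x_E$ indeed yields $\tilde A_3=-\tfrac{3}{p}\mathcal{Q}^4_1$, consistent with the paper's stated formula for $A_3$ up to the choice of branch of $\sqrt{-p^2}$). The only cosmetic difference is that the paper obtains the clause $E\in\mathbb{R}^2\setminus S^1$ by citing Corollary~\ref{cor.3.1}, whereas you observe, equivalently and correctly, that $\mathcal{Q}^4_1$ restricted to $S^1$ equals $p\neq 0$, so the exclusion is automatic.
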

\begin{proof} 
By Corollary \ref{cor.3.1}, if $\left(\mathcal{D}(E),\mathcal{P}(p)\right)$ is a $4$-Poncelet pair, then $E\in \mathbb{R}^2\setminus S^1$.
Now we calculate $A_3(x_E,y_E,p)$:
\begin{eqnarray*}
A_3(x_E,y_E,p)=-\frac{\sqrt{-p^2}}{2p^3}\mathcal{Q}^4_1(x_E,y_E,p).
\end{eqnarray*}
Therefore, the Cayley condition, eq. \eqref{eq.2.0.2}, for $n=4$, given by $A_3(x_E,y_E,p)=0$ is equivalent to $\mathcal{Q}^4_1(x_E,y_E,p)=0$.
\end{proof}

\begin{theorem}\label{thm.3.4}
Let $F$ be the focus of the  confocal family of parabolas $\mathcal{F}$. Let $\mathcal{D}$ be a circle centered at $E$. Let $\ell'$ be the line containing $F$ parallel to the directrix $\ell$ (Figure \ref{fig.6}). Then
\begin{itemize}
    \item[a)] for $E \in \ell'$, $(\mathcal{D}(E),\mathcal{P})$ is a $4$-Poncelet pair if and only if $E=F$.
    \item[b)] $(\mathcal{D}(E),\mathcal{P})$ is a 4-Poncelet pair for a unique $\mathcal{P}\in \mathcal{F}$ if and only if $E \notin \ell'$ and $F\notin \mathcal{D}$;
\end{itemize}
\end{theorem}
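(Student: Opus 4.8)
The plan is to reduce everything to the algebraic criterion of Proposition \ref{prop.3.2}, reading the defining equation \eqref{eq.3.2.1} as a \emph{linear} equation in the single unknown $p$ with the center $E=(x_E,y_E)$ held fixed. Writing $\rho:=x_E^2+y_E^2$, I record three dictionary entries: $E=F\iff\rho=0$; $F\in\mathcal{D}\iff E\in S^1\iff\rho=1$; and $E\in\ell'\iff x_E=0$ (since the directrix $\ell:x=-p$ is vertical, the line through $F=(0,0)$ parallel to it is the $y$-axis). In this notation \eqref{eq.3.2.1} becomes $\rho\,p=-x_E(\rho-1)$, and the whole theorem amounts to counting the solutions $p\in\mathbb{R}^*$ of this equation, subject to the standing constraint $E\notin S^1$ that already appears in Proposition \ref{prop.3.2}.

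For part a) I would set $x_E=0$. The right-hand side then vanishes and the equation reads $\rho\,p=0$; since $p\neq 0$ this forces $\rho=0$, i.e. $E=F$. Conversely, when $E=F$ both sides vanish identically, so \eqref{eq.3.2.1} holds for \emph{every} $p$, and as $\rho=0\neq 1$ we have $E\notin S^1$; hence by Proposition \ref{prop.3.2} a $4$-Poncelet pair exists (indeed for all $\mathcal{P}\in\mathcal{F}$, which is the $4$-isoperiodic phenomenon). This establishes the equivalence in a): for $E\in\ell'$, a $4$-Poncelet pair exists if and only if $E=F$.

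For the ``if'' direction of part b) I would assume $E\notin\ell'$ (so $x_E\neq 0$, whence $\rho\geq x_E^2>0$) and $F\notin\mathcal{D}$ (so $\rho\neq 1$). Because $\rho>0$, the linear equation has the unique solution $p=x_E(1-\rho)/\rho$, and since $x_E\neq 0$ and $\rho\neq 1$ this value is nonzero, so $p\in\mathbb{R}^*$; combined with $E\notin S^1$, Proposition \ref{prop.3.2} then certifies that $(\mathcal{D}(E),\mathcal{P}(p))$ is a $4$-Poncelet pair, and uniqueness of the root $p$ shows it is the only parabola of $\mathcal{F}$ with this property. For the converse I would argue contrapositively using the earlier results: if $E\in\ell'$ then by part a) either no pair exists or (when $E=F$) every parabola works, so the pair is never unique; and if $F\in\mathcal{D}$ then $E\in S^1$, and Corollary \ref{cor.3.1} forbids any $4$-Poncelet pair whatsoever. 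Hence the existence of a unique $\mathcal{P}$ forces both $E\notin\ell'$ and $F\notin\mathcal{D}$.

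The computation is routine once \eqref{eq.3.2.1} is viewed as linear in $p$; the point I would flag as the crux is the \emph{degenerate} locus $\rho=0$, i.e. $E=F$. This is exactly where the equation collapses to the identity $0=0$, so its solution set jumps from a single point to all of $\mathbb{R}^*$, and recognizing that this collapse is what separates the ``unique parabola'' regime of b) from the ``isoperiodic'' regime of a) is the conceptual heart of the statement. A secondary bookkeeping point is that, for $x_E\neq 0$, the excluded value $p=0$ and the excluded circles $E\in S^1$ coincide (both say $\rho=1$), so nothing is lost when passing between the constraint $p\in\mathbb{R}^*$ and the hypothesis $F\notin\mathcal{D}$.
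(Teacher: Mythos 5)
Your proof is correct and follows essentially the same route as the paper: both reduce the statement to the linear-in-$p$ criterion $\mathcal{Q}^4_1(x_E,y_E,p)=0$ of Proposition \ref{prop.3.2} and solve for $p$, with the degenerate case $x_E^2+y_E^2=0$ giving the isoperiodic collapse. If anything you are slightly more complete, since you spell out the ``only if'' direction of b) (via part a) and Corollary \ref{cor.3.1}), which the paper leaves implicit.
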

\begin{proof}
With $F=(0,0)$ and $E=(x_E,y_E)$, we have $\ell': x=0$.

If $E \in \ell'$, $x_E=0$, then the eq. \eqref{eq.3.2.1} can only be satisfied for $p \neq 0$ if and only if $y_E=0$. Hence, there exists no $4$-Poncelet pair if $x_E=0$ and $y_E \neq 0$. This proves a).

To prove b), let $E \notin \ell'$ and $F \notin \mathcal{D}$. Then $(x_E, y_E)\in \mathbb{R}^2\setminus S^1$ with $x_E \neq 0$. By the Proposition \ref{prop.3.2}, there exists a unique nonzero $p^*$ such that $\left(\mathcal{D}(E),\mathcal{P}(p^*)\right)$ is a $4$-Poncelet pair where
\begin{eqnarray}\label{eq.3.2.2}
p^*(E) =-\frac{x_E(x_E^2+y_E^2-1)}{x_E^2 + y_E^2}.
\end{eqnarray}
See Figure \ref{fig.6}.
\end{proof}

\begin{figure}
    \centering
\begin{tikzpicture}[scale=1.5]
\clip(-2.5,-2) rectangle (2,4);
\draw [samples=100,rotate around={-90.:(-0.25,0.)},xshift=-0.25cm,yshift=0.cm,line width=1.pt,domain=-4.0:4.0)] plot (\x,{(\x)^2/2/0.5});

\draw [line width=1.pt,dash pattern=on 4pt off 4pt] (0.,0.) circle (1.cm);
\draw [line width=1.pt,dash pattern=on 4pt off 4pt] (0.,-2.169447815275758) -- (0.,4.501737632813708);
\draw [line width=1.pt] (-0.5,-2.169447815275758) -- (-0.5,4.501737632813708);
\draw [line width=1.pt] (-0.5,-2.169447815275758) -- (-0.5,4.501737632813708);
\draw [line width=1.pt] (-0.90647, 1.1868) circle (1.cm);
\draw [line width=1.pt] (0.,2.5) circle (1.cm);
\draw [line width=1.pt] (-0.8893287413672115,-0.45726840015392617) circle (1.cm);
\draw [line width=1.pt,domain=-5.242608256494929:3.722900229297295] plot(\x,{(-0.-0.*\x)/1.});
\begin{scriptsize}
\draw [fill=black] (0.,0.) circle (1.0pt);
\draw[color=black] (0.09,0.14693602086641783) node {$F$};
\draw[color=black] (-0.6716108198410193,3.7) node {$\ell$};
\draw[color=black] (0.22,3.7) node {$\ell'$};
\draw [fill=black] (0.,2.5) circle (1.0pt);
\draw[color=black] (0.1,2.6) node {$E$};
\draw[color=black] (-0.9363404011144117,3.2354478023893187) node {$\mathcal{D}$};
\draw [fill=black] (-0.90647, 1.1868) circle (1.0pt);
\draw[color=black] (-1.05, 1.1868) node {$E$};
\draw[color=black] (-1.9423128099533031,1.6647189535005291) node {$\mathcal{D}$};
\draw [fill=black] (-0.8893287413665716,-0.45726840015394626) circle (1.0pt);
\draw[color=black] (-0.8,-0.29427994792256795) node {$E$};
\draw[color=black] (-2,-0.2263548952484353) node {$\mathcal{D}$};
\draw[color=black] (0.9,0.7) node {$\tilde{\mathcal{D}}$};
\draw[color=black] (1.8,1.5) node {$\mathcal{P}$};
\end{scriptsize}
\end{tikzpicture}
\caption{Theorem \ref{thm.3.4}: no 4-Poncelet pair exists if $E\in \ell' \cup \tilde{\mathcal{D}}\setminus \{F\}$.}\label{fig.6}
\end{figure}
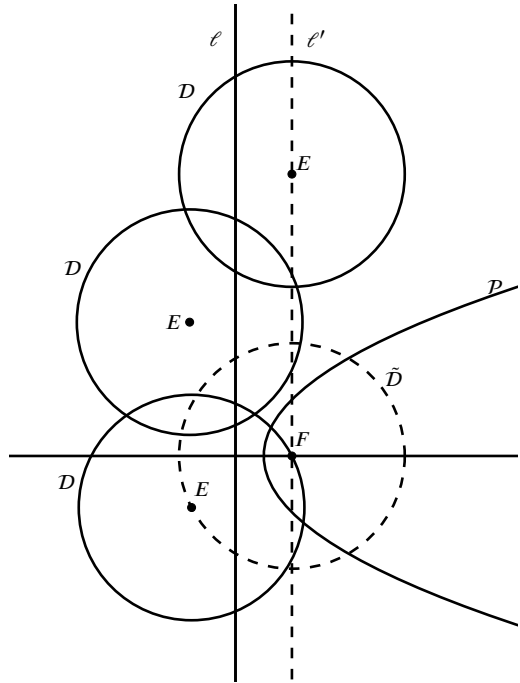

Observe that $\mathcal{Q}^4_{1}(0,0,p)=0$ for every $p\neq 0$. Thus, we also have the following:

\begin{theorem}\label{thm.3.5}
The confocal family  $\mathcal{F}$ of parabolas with the focus $F$ is $4$-isoperiodic with $\mathcal{D}(E)$ if and only if $E=F$.
\end{theorem}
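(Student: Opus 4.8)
The plan is to reduce the statement directly to the algebraic characterization of $4$-Poncelet pairs already established in Proposition \ref{prop.3.2}. By Definition \ref{def.1.2}, the family $\mathcal{F}$ is $4$-isoperiodic with $\mathcal{D}(E)$ precisely when $(\mathcal{D}(E),\mathcal{P}(p))$ is a $4$-Poncelet pair for \emph{every} $p\in\mathbb{R}^*$. Proposition \ref{prop.3.2} tells us this holds if and only if $E\in\mathbb{R}^2\setminus S^1$ and the identity $\mathcal{Q}^4_1(x_E,y_E,p)=0$ holds for all $p\in\mathbb{R}^*$. Thus the entire theorem comes down to analyzing when the single relation $\mathcal{Q}^4_1(x_E,y_E,p)=0$ can be satisfied for infinitely many values of $p$.

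The key observation is that $\mathcal{Q}^4_1(x_E,y_E,p)=(x_E^2+y_E^2)p+x_E(x_E^2+y_E^2-1)$ is an affine-linear polynomial in the variable $p$. Such a polynomial vanishes at more than one point --- in particular on all of $\mathbb{R}^*$ --- only if it is identically zero, i.e.\ only if both of its coefficients vanish. I would therefore set the coefficient of $p$ and the constant term separately to zero: the leading coefficient $x_E^2+y_E^2=0$ forces $x_E=y_E=0$ over the reals, and the constant term $x_E(x_E^2+y_E^2-1)$ then vanishes automatically. This yields $E=(0,0)=F$, establishing the forward implication.

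For the converse I would simply substitute $E=F=(0,0)$ into $\mathcal{Q}^4_1$, obtaining $\mathcal{Q}^4_1(0,0,p)=0$ identically in $p$ (as already noted just before the theorem), and verify the side condition $E\notin S^1$, which holds since $x_E^2+y_E^2=0\neq 1$. Proposition \ref{prop.3.2} then guarantees that $(\mathcal{D}(F),\mathcal{P}(p))$ is a $4$-Poncelet pair for every $p\in\mathbb{R}^*$, so $\mathcal{F}$ is $4$-isoperiodic with $\mathcal{D}(F)$.

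I do not anticipate any genuine obstacle here: the argument is an elementary polynomial-vanishing argument, and the only point requiring a moment's care is checking that the excluded-locus condition $E\in\mathbb{R}^2\setminus S^1$ from Proposition \ref{prop.3.2} remains compatible at the distinguished point $E=F$, which it does. The substantive content all sits inside Proposition \ref{prop.3.2}, and this theorem is its clean corollary.
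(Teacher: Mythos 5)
Your proposal is correct and follows essentially the same route as the paper: the paper's proof likewise observes that $4$-isoperiodicity forces $\mathcal{Q}^4_1(x_E,y_E,p)$ to be the zero polynomial in $\mathbb{R}[p]$, which by eq.~\eqref{eq.3.2.1} happens if and only if $(x_E,y_E)=(0,0)$. Your version merely spells out the coefficient-vanishing argument and the compatibility check $E\notin S^1$ in slightly more detail.
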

\begin{proof}
The confocal family $\mathcal{F}$ with the focus $F=(0,0)$ is $4$-isoperiodic with $\mathcal{D}(E)$ if and only if the polynomial $\mathcal{Q}^4_{1}(x_E,y_E,p)=0$---the zero polynomial in $\mathbb{R}[p]$. By the eq. \eqref{eq.3.2.1}, this happens if and only if $(x_E,y_E)=(0,0)$. See Figure \ref{fig.7}. 
\end{proof}

\begin{figure}
\centering
\definecolor{yqyqyq}{rgb}{0.5019607843137255,0.5019607843137255,0.5019607843137255}
\begin{tikzpicture}[scale=1.7]
\clip(-1.5,-2.5) rectangle (2.5,2.5);
\draw [samples=100,rotate around={-90.:(-0.5,0.)},xshift=-0.5cm,yshift=0.cm,line width=0.7pt,color=yqyqyq,domain=-6.0:6.0)] plot (\x,{(\x)^2/2/1.0});
\draw [line width=0.7pt] (0.,0.) circle (1.cm);
\draw [line width=0.7pt,color=black] (-0.5806227727406026,-0.8141727063559762)-- (-0.419377227259398,-0.907812062739981);
\draw [line width=0.7pt,color=black] (-0.419377227259398,-0.907812062739981)-- (-0.5806227727406013,-0.814172706355977);
\draw [line width=0.7pt,color=black] (-0.5806227727406013,-0.814172706355977)-- (-0.41937722725939736,0.9078120627399813);
\draw [line width=0.7pt,color=black] (-0.419377227259398,-0.907812062739981)-- (-0.5806227727406019,0.8141727063559766);
\draw [line width=0.7pt,color=black] (-0.5806227727406019,0.8141727063559766)-- (-0.41937722725939736,0.9078120627399813);
\draw [samples=100,rotate around={-90.:(-0.4,0.)},xshift=-0.4cm,yshift=0.cm,line width=0.7pt,color=yqyqyq,domain=-4.800000000000001:4.800000000000001)] plot (\x,{(\x)^2/2/0.8});
\draw [line width=0.7pt,color=black] (-0.5806227727406026,-0.8141727063559762)-- (-0.21937722725939784,-0.9756401140584465);
\draw [line width=0.7pt,color=black] (-0.21937722725939784,-0.9756401140584465)-- (-0.5806227727406023,-0.8141727063559765);
\draw [line width=0.7pt,color=black] (-0.5806227727406023,-0.8141727063559765)-- (-0.21937722725939746,0.9756401140584465);
\draw [line width=0.7pt,color=black] (-0.21937722725939784,-0.9756401140584465)-- (-0.5806227727406021,0.8141727063559765);
\draw [line width=0.7pt,color=black] (-0.5806227727406021,0.8141727063559765)-- (-0.21937722725939746,0.9756401140584465);
\draw [samples=100,rotate around={-90.:(-0.4,0.)},xshift=-0.4cm,yshift=0.cm,line width=0.7pt,color=yqyqyq,domain=-4.800000000000001:4.800000000000001)] plot (\x,{(\x)^2/2/0.8});
\draw [line width=0.7pt,color=black] (-0.5806227727406026,-0.8141727063559762)-- (-0.21937722725939784,-0.9756401140584465);
\draw [line width=0.7pt,color=black] (-0.21937722725939784,-0.9756401140584465)-- (-0.5806227727406023,-0.8141727063559765);
\draw [line width=0.7pt,color=black] (-0.5806227727406023,-0.8141727063559765)-- (-0.21937722725939746,0.9756401140584465);
\draw [line width=0.7pt,color=black] (-0.21937722725939784,-0.9756401140584465)-- (-0.5806227727406021,0.8141727063559765);
\draw [line width=0.7pt,color=black] (-0.5806227727406021,0.8141727063559765)-- (-0.21937722725939746,0.9756401140584465);
\draw [samples=100,rotate around={-90.:(-0.3,0.)},xshift=-0.3cm,yshift=0.cm,line width=0.7pt,color=yqyqyq,domain=-3.5999999999999996:3.5999999999999996)] plot (\x,{(\x)^2/2/0.6});
\draw [line width=0.7pt,color=black] (-0.5806227727406026,-0.8141727063559762)-- (-0.019377227259398083,-0.9998122439056933);
\draw [line width=0.7pt,color=black] (-0.019377227259398083,-0.9998122439056933)-- (-0.5806227727406011,-0.8141727063559772);
\draw [line width=0.7pt,color=black] (-0.5806227727406011,-0.8141727063559772)-- (-0.01937722725939739,0.9998122439056933);
\draw [line width=0.7pt,color=black] (-0.019377227259398083,-0.9998122439056933)-- (-0.580622772740602,0.8141727063559765);
\draw [line width=0.7pt,color=black] (-0.580622772740602,0.8141727063559765)-- (-0.01937722725939739,0.9998122439056933);
\draw [samples=100,rotate around={-90.:(-0.2,0.)},xshift=-0.2cm,yshift=0.cm,line width=0.7pt,color=yqyqyq,domain=-3.2:3.2)] plot (\x,{(\x)^2/2/0.4});
\draw [line width=0.7pt,color=black] (-0.5806227727406026,-0.8141727063559762)-- (0.18062277274060312,-0.9835524459669126);
\draw [line width=0.7pt,color=black] (0.18062277274060312,-0.9835524459669126)-- (-0.5806227727406029,-0.8141727063559759);
\draw [line width=0.7pt,color=black] (-0.5806227727406029,-0.8141727063559759)-- (0.18062277274060257,0.9835524459669127);
\draw [line width=0.7pt,color=black] (0.18062277274060312,-0.9835524459669126)-- (-0.5806227727406031,0.8141727063559758);
\draw [line width=0.7pt,color=black] (-0.5806227727406031,0.8141727063559758)-- (0.18062277274060257,0.9835524459669127);
\draw [samples=100,rotate around={-90.:(-0.1,0.)},xshift=-0.1cm,yshift=0.cm,line width=0.7pt,color=yqyqyq,domain=-2.0:2.0)] plot (\x,{(\x)^2/2/0.2});
\draw [line width=0.7pt,color=black] (-0.5806227727406026,-0.8141727063559762)-- (0.3806227727406026,-0.9247303957755771);
\draw [line width=0.7pt,color=black] (0.3806227727406026,-0.9247303957755771)-- (-0.5806227727406027,-0.814172706355976);
\draw [line width=0.7pt,color=black] (-0.5806227727406027,-0.814172706355976)-- (0.3806227727406025,0.924730395775577);
\draw [line width=0.7pt,color=black] (0.3806227727406026,-0.9247303957755771)-- (-0.5806227727406026,0.814172706355976);
\draw [line width=0.7pt,color=black] (-0.5806227727406026,0.814172706355976)-- (0.3806227727406025,0.924730395775577);
\begin{scriptsize}
\draw [fill=black] (0,0) circle (1.0pt);
\draw [fill=black] (-0.5806227727406026,-0.8141727063559762) circle (1.0pt);
\draw [fill=black] (-0.419377227259398,-0.907812062739981) circle (1.0pt);
\draw [fill=black] (-0.41937722725939736,0.9078120627399813) circle (1.0pt);
\draw [fill=black] (-0.5806227727406019,0.8141727063559766) circle (1.0pt);
\draw [fill=black] (-0.21937722725939746,0.9756401140584465) circle (1.0pt);
\draw [fill=black] (-0.21937722725939746,-0.9756401140584465) circle (1.0pt);
\draw [fill=black] (-0.019377227259398083,-0.9998122439056933) circle (1.0pt);
\draw [fill=black] (-0.01937722725939739,0.9998122439056933) circle (1.0pt);
\draw [fill=black] (0.18062277274060312,-0.9835524459669126) circle (1.0pt);
\draw [fill=black] (0.18062277274060257,0.9835524459669127) circle (1.0pt);
\draw [fill=black] (0.3806227727406026,-0.9247303957755771) circle (1.0pt);
\draw [fill=black] (0.3806227727406025,0.924730395775577) circle (1.0pt);
\draw[color=black] (1,2) node {$\mathcal{F}$};
\draw[color=black] (-1,0.5) node {$\mathcal{D}$};
\draw[color=black] (0.1,0.1) node {$F$};
\end{scriptsize}
\end{tikzpicture}
\caption{An illustration of $4$-isoperiodicity.}\label{fig.7}
\end{figure}
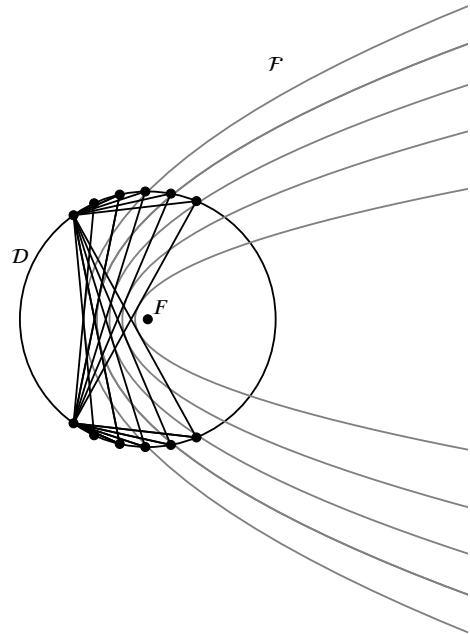

\begin{corollary}\label{cor.3.2} A circle and a parabola do not form an $n$-Poncelet pair $(\mathcal{D}(E),\mathcal{P})$ for $n\neq 4$ and for any $\mathcal{P}\in \mathcal{F}$ if $E=F$. 
\end{corollary}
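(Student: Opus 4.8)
The plan is to argue by contradiction, reusing the template of Corollary~\ref{cor.3.1} almost verbatim but with the $4$-isoperiodicity statement of Theorem~\ref{thm.3.5} playing the role that the $3$-isoperiodicity statement of Theorem~\ref{thm.3.2} plays there. So I would assume $E=F$ and suppose, contrary to the claim, that there exist some $n\neq 4$ and some parabola $\mathcal{P}\in\mathcal{F}$ for which $(\mathcal{D}(E),\mathcal{P})$ is an $n$-Poncelet pair.

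First I would apply Theorem~\ref{thm.3.5}: since $E=F$, the family $\mathcal{F}$ is $4$-isoperiodic with $\mathcal{D}(E)$, so by Definition~\ref{def.1.2} the pair $(\mathcal{D}(E),\mathcal{C})$ is a $4$-Poncelet pair for every nondegenerate $\mathcal{C}\in\mathcal{F}$, in particular for the $\mathcal{P}$ fixed above. Hence the single pair $(\mathcal{D}(E),\mathcal{P})$ would be simultaneously a $4$-Poncelet pair and an $n$-Poncelet pair. The final step is to turn this simultaneity into a contradiction: by the convention adopted in the Remark following Theorem~\ref{thm.2.1}, a $k$-gon is never counted as an $mk$-gon traversed $m$ times, so the number of sides of the closing Poncelet polygon is a well-defined invariant of the two conics. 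A single pair therefore cannot be both a $4$-Poncelet and an $n$-Poncelet pair with $n\neq 4$, which is the desired contradiction.

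I do not expect a serious obstacle; the argument is a short mirror of Corollary~\ref{cor.3.1}. The one place meriting care is the last step, where one must confirm that the uniqueness of the Poncelet period is precisely what the Remark after Cayley's theorem secures, so that the contradiction rests on the paper's stated conventions rather than on an unstated fact. Once that is noted, the proof collapses to essentially the same three lines as Corollary~\ref{cor.3.1}, with Theorem~\ref{thm.3.5} substituted for Theorem~\ref{thm.3.2}.
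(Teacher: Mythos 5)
Your proof is correct and is essentially the paper's own argument: the paper states Corollary~\ref{cor.3.2} without a written proof, but it is plainly intended to follow from Theorem~\ref{thm.3.5} by the same three-line contradiction used for Corollary~\ref{cor.3.1}, which is exactly what you carry out. Your added care in grounding the final contradiction in the uniqueness of the Poncelet period (via the Remark after Cayley's theorem) only makes explicit what the paper's version of the argument leaves implicit.
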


Denote
\begin{equation}\label{eq.3.2.3}
\Sigma = S^1 \cup \{(0,0)\}.
\end{equation}
\begin{corollary}\label{cor.3.3}
If $(\mathcal{D}(E),\mathcal{P}(p))$ is an $n$-Poncelet pair for $n\neq 3,4$, then $E \in \mathbb{R}^2\setminus \Sigma$.
\end{corollary}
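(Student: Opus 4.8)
The plan is to rule out the two components of $\Sigma = S^1 \cup \{(0,0)\}$ separately, invoking Corollary~\ref{cor.3.1} and Corollary~\ref{cor.3.2} respectively, each in contrapositive form. So I would begin by assuming that $(\mathcal{D}(E),\mathcal{P}(p))$ is a genuine $n$-Poncelet pair with $n \notin \{3,4\}$, and then show that $E$ can lie neither on $S^1$ nor at the focus $F=(0,0)$.

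For the circle $S^1$: the discussion preceding Theorem~\ref{thm.3.1} records that $E \in S^1$ is equivalent to $F \in \mathcal{D}(E)$. If $E$ were on $S^1$, the focus would lie on the circle, and Corollary~\ref{cor.3.1} asserts that such a circle forms no $n$-Poncelet pair with any parabola of $\mathcal{F}$ unless $n = 3$. Since our pair exists with $n \neq 3$, this is a contradiction, whence $E \notin S^1$. For the focus: if $E = F = (0,0)$, then Corollary~\ref{cor.3.2} asserts that such a circle forms no $n$-Poncelet pair with any parabola of $\mathcal{F}$ unless $n = 4$. Our pair exists with $n \neq 4$, again a contradiction, so $E \neq (0,0)$. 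Combining the two exclusions gives $E \in \mathbb{R}^2 \setminus (S^1 \cup \{(0,0)\}) = \mathbb{R}^2 \setminus \Sigma$, as claimed.

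There is no real computational obstacle here: the two preceding corollaries have already isolated the exceptional loci associated with $n=3$ and $n=4$, so the only remaining content is the logical bookkeeping that assembles them. The single point worth stating carefully is that the conclusion concerns one fixed pair and one fixed value of $n$, so each corollary must be applied in its contrapositive reading---\emph{if the pair exists and $n \neq 3$ then $F \notin \mathcal{D}(E)$}, and symmetrically for the focus---rather than in the isoperiodicity form of Theorems~\ref{thm.3.2} and~\ref{thm.3.5}, which quantify over all $\mathcal{P} \in \mathcal{F}$.
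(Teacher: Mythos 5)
Your proof is correct and matches the paper's (implicit) argument: the corollary is stated in the paper without proof precisely because it is the conjunction of the contrapositives of Corollaries \ref{cor.3.1} and \ref{cor.3.2}, which is exactly what you assemble. The remark about applying each corollary to the single fixed pair, rather than in isoperiodicity form, is the right point of care and is handled correctly.
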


\begin{corollary}\label{cor.3.4}
If $\mathcal{F}$ is an $n$-isoperiodic family of parabolas with $\mathcal{D}(E)$ for $n\neq 3,4$, then $x_E=0$ and $y_E \in \mathbb{R}\setminus \{0,\pm 1\}$.
\end{corollary}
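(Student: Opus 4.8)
The plan is to separate the conclusion into the assertion $E\notin\Sigma$ (with $\Sigma$ as in \eqref{eq.3.2.3}) and the assertion $x_E=0$, and to note that the two together give the full statement. By Definition \ref{def.1.2}, $n$-isoperiodicity of $\mathcal F$ with $\mathcal D(E)$ means that $(\mathcal D(E),\mathcal P(p))$ is an $n$-Poncelet pair for \emph{every} $p\in\mathbb R^*$. First I would apply Corollary \ref{cor.3.3} to one such $p$, obtaining $E\in\mathbb R^2\setminus\Sigma$, i.e. $x_E^2+y_E^2\neq1$ and $(x_E,y_E)\neq(0,0)$. Setting $s:=x_E^2+y_E^2$, this reads $s\neq0,1$. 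Hence, once $x_E=0$ is proved, $s=y_E^2$ gives $y_E\neq0$ and $y_E\neq\pm1$, that is $y_E\in\mathbb R\setminus\{0,\pm1\}$. So the corollary reduces to showing $x_E=0$.

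To prove $x_E=0$ I would argue by contradiction. Assume $x_E\neq0$; since also $E\notin S^1$, the number
\begin{equation*}
p^*:=-\frac{x_E(x_E^2+y_E^2-1)}{x_E^2+y_E^2}=-\frac{x_E(s-1)}{s}
\end{equation*}
is a well-defined element of $\mathbb R^*$ (its numerator is nonzero because $x_E\neq0$ and $s\neq1$, and $s\neq0$). By construction $\mathcal Q^4_{1}(x_E,y_E,p^*)=s\,p^*+x_E(s-1)=0$, so Proposition \ref{prop.3.2} shows that $(\mathcal D(E),\mathcal P(p^*))$ is a $4$-Poncelet pair; this is exactly the unique member of the family singled out in Theorem \ref{thm.3.4}(b), cf.\ \eqref{eq.3.2.2}. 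On the other hand, $p^*\in\mathbb R^*$ means $\mathcal P(p^*)\in\mathcal F$, so isoperiodicity forces $(\mathcal D(E),\mathcal P(p^*))$ to be a genuine $n$-Poncelet pair with $n\geq5$ as well.

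The main obstacle is to convert ``simultaneously a $4$- and an $n$-Poncelet pair'' into a contradiction, and here I would use the dynamical interpretation behind Cayley's Theorem \ref{thm.2.1} (see \cite{Griffiths-Harris1978,Dragovic-Radnovic2011}): to $(\mathcal D(E),\mathcal P(p^*))$ one attaches the elliptic curve $\eta^2=f(\lambda)$ from \eqref{eq.2.0.3} and a shift element $\tau$ on it, so that the Poncelet map is translation by $\tau$ and the pair closes after $k$ steps precisely when $k\tau=0$. The $4$-Poncelet condition (equivalently $A_3=0$, by Proposition \ref{prop.3.2}) then forces $4\tau=0$, hence $\operatorname{ord}(\tau)\mid4$ and $\operatorname{ord}(\tau)\le4$; whereas the existence of a genuine $n$-gon forces $\operatorname{ord}(\tau)=n\geq5$. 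These are incompatible, so the assumption $x_E\neq0$ is untenable and $x_E=0$.

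The delicate point I would take care of is genuineness: a priori the closing quadrilateral at $p^*$ could degenerate (e.g.\ to a doubly traced digon), and one must know that the polygon supplied by isoperiodicity is an honest $n$-gon in the sense of Definitions \ref{def.1.1}--\ref{def.1.2} and the remark following Theorem \ref{thm.2.1}. Phrasing the contradiction through $\operatorname{ord}(\tau)$ is what makes this harmless: the $4$-condition only yields $\operatorname{ord}(\tau)\le4$ regardless of degeneracy, while a genuine $n$-gon with $n\geq5$ yields $\operatorname{ord}(\tau)=n$. As a cross-check for small $n$ I would also run the purely computational route: expand the Cayley determinant $\tilde{\mathcal Q}^n(p,x_E,y_E)$ of \eqref{eq.2.2.1} as a Laurent series in $1/p$ using Lemma \ref{lemm.2.1}, starting from $A_2=-\tfrac{\sqrt{-p^2}}{2p^2}\mathcal Q^3_0$ and $A_3=-\tfrac{\sqrt{-p^2}}{2p^3}\mathcal Q^4_1$; demanding that every coefficient in $1/p$ vanish should, after dividing out the nonzero factor $s$, isolate a factor $x_E$ and again give $x_E=0$.
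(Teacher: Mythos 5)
Your proof is correct and follows essentially the same route as the paper: Corollary \ref{cor.3.3} to exclude $\Sigma$, then Theorem \ref{thm.3.4}~b) (via \eqref{eq.3.2.2}) to produce the unique $4$-Poncelet partner $\mathcal{P}(p^*)$ when $x_E\neq 0$, contradicting $n$-isoperiodicity for $n\neq 3,4$, after which $y_E\notin\{0,\pm1\}$ follows from $E\notin\Sigma$. The only difference is that you spell out, via the order of the shift on the associated elliptic curve, why a pair cannot simultaneously be a genuine $4$- and a genuine $n$-Poncelet pair — a step the paper leaves implicit.
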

\begin{proof}
By Corollary \ref{cor.3.3}, $E=(x_E,y_E)\in \mathbb{R}^2\setminus\Sigma$. Now if $x_E \neq 0$, then by Theorem \ref{thm.3.4}, there exists a unique $\mathcal{P}\in \mathcal{F}$ such that $(\mathcal{D}(E),\mathcal{P})$ is a 4-Poncelet pair, so it cannot be $n$-isoperiodic with $\mathcal{D}(E)$ for $n\neq 4$. Hence, $x_E=0$. Therefore, $y_E \notin \{0,\pm 1\}$.
\end{proof}

\subsubsection{Geometric Approach to $4$-isoperiodicity}

\begin{theorem}\label{thm.3.6}
Given a parabola $\mathcal{P}$ with the focus $F$ and a circle $\mathcal{D}(F)$. Then, $(\mathcal{D}(F), \mathcal{P})$ is a $4$-Poncelet pair. Moreover, $(\mathcal{D}(F), \mathcal{P}')$ is a $4$-Poncelet pair for every parabola $\mathcal{P}'$ confocal with $\mathcal{P}$.
\end{theorem}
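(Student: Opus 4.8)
The plan is to give a synthetic construction paralleling the geometric proof of Theorem \ref{thm.3.3}, but replacing the defining property by the \emph{reflection property} of the parabola: the reflection of the focus $F$ across any tangent line lies on the directrix $\ell$. I would first deduce this from the focal property, Lemma \ref{lemm.3.1}(ii). Reflecting the focal ray $XF$ across the tangent at a point $X\in\mathcal{P}$ sends it to the ray from $X$ perpendicular to $\ell$; since $|XF|=\mathrm{dist}(X,\ell)$, the image of $F$ is exactly the foot of the perpendicular from $X$ to $\ell$, hence a point of $\ell$. This produces a bijection between tangent lines of $\mathcal{P}$ and points of $\ell$, under which each tangent is the perpendicular bisector of the segment joining $F$ to its associated directrix point.

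The crucial observation comes next. If a point $V$ is the intersection of two tangents whose associated directrix points are $P_\alpha=(-p,\alpha)$ and $P_\beta=(-p,\beta)$, then, each tangent being a perpendicular bisector through $V$, we have $|VF|=|VP_\alpha|=|VP_\beta|$; that is, $V$ is the circumcenter of $\triangle F P_\alpha P_\beta$ and $|FV|$ is its circumradius. A short computation (using $\mathrm{Area}(\triangle FP_\alpha P_\beta)=|p|\,|\alpha-\beta|/2$ and $R_c=\tfrac{abc}{4\,\mathrm{Area}}$) yields the clean factorized form
\[
|FV|^2 \;=\; \frac{(\alpha^2+p^2)(\beta^2+p^2)}{4p^2}.
\]
Hence $V$ lies on the circle $\mathcal{D}(F)$ of radius $R$ precisely when $(\alpha^2+p^2)(\beta^2+p^2)=4p^2R^2$.

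I would then construct a Poncelet quadrilateral directly. Choose $\eta,\zeta$ with $(\eta^2+p^2)(\zeta^2+p^2)=4p^2R^2$ and $\eta\neq\pm\zeta$, $\eta\zeta\neq0$, and take the four directrix points $(-p,\eta),(-p,\zeta),(-p,-\eta),(-p,-\zeta)$. Their four perpendicular bisectors are tangent to $\mathcal{P}$ and meet in consecutive vertices $A,B,C,D$; for each vertex the relevant pair of directrix $y$-coordinates has $(\,\cdot^2+p^2)(\,\cdot^2+p^2)=4p^2R^2$, by the evenness of $t\mapsto t^2+p^2$, so by the displayed identity every vertex lies on $\mathcal{D}(F)$. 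This exhibits a quadrilateral inscribed in $\mathcal{D}(F)$ and circumscribed about $\mathcal{P}$, proving $(\mathcal{D}(F),\mathcal{P})$ is a $4$-Poncelet pair. Because the governing equation $(\eta^2+p^2)(\zeta^2+p^2)=4p^2R^2$ is solvable for every $p\in\mathbb{R}^*$, the identical construction applies to each confocal parabola $\mathcal{P}'=\mathcal{P}(p')$, which gives the \emph{moreover} clause and re-establishes $4$-isoperiodicity geometrically, in agreement with Theorem \ref{thm.3.5}.

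The main obstacle is organizational rather than computational: checking that the four tangent lines actually close up into a nondegenerate, correctly ordered quadrilateral. The symmetric choice $\pm\eta,\pm\zeta$ of directrix points is exactly what forces closure, since the four circumradius conditions then collapse to the single equation above, while $\eta\neq\pm\zeta$ and $\eta\zeta\neq0$ guarantee four distinct tangents. When $R<|p|/2$, so the circle lies strictly inside $\mathcal{P}$ and does not reach the vertex, the solutions $\eta,\zeta$ become complex; then, exactly as in Remark \ref{rem.3.2}, the resulting Poncelet quadrilateral lives in $\mathbb{CP}^2$ and still certifies the Cayley condition. I would state this case explicitly to match Theorem \ref{thm.3.5}.
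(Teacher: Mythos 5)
Your proof is correct, but it takes a genuinely different route from the paper's. The paper's argument is a short synthetic one built on a \emph{degenerate} Poncelet polygon: it takes the two intersection points $B,D$ of $\mathcal{D}(F)$ with $\mathcal{P}$ and the point $A$ where the circle meets the axis outside the parabola, shows via a rhombus (with vertices $A$, $F$, $D$, and the foot of the perpendicular from $D$ to the directrix) and the focal property that $AD$ and $AB$ are tangent to $\mathcal{P}$ at $D$ and $B$, and then takes the degenerate quadrilateral $ABAD$ as the certifying $4$-Poncelet polygon, invoking Poncelet's theorem for the rest. You instead use the perpendicular-bisector description of the tangents (each tangent is the perpendicular bisector of the segment from $F$ to a point of the directrix), note that the intersection $V$ of two tangents is the circumcenter of $F$ and the two associated directrix points, and derive $|FV|^2=(\alpha^2+p^2)(\beta^2+p^2)/(4p^2)$, which I verified. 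The symmetric choice $\pm\eta,\pm\zeta$ then yields a nondegenerate cyclic quadrilateral, and indeed a one-parameter family of them as $\eta$ varies, so you obtain the Poncelet closure explicitly rather than by citation; you also make the real/complex threshold $2R>|p|$ (equivalently, the circle meeting the parabola) transparent, whereas the paper leaves the non-intersecting case to the general philosophy of Remark \ref{rem.3.2}, exactly as you do. Two small points to nail down in a final write-up: nondegeneracy is automatic because distinct tangents to a parabola are never parallel and no three of your four tangents can concur (that would require $F$ and three collinear directrix points to be concyclic, which is impossible since a circle meets a line in at most two points); and the boundary case $2R=|p|$ forces $\eta=\zeta=0$, so it should be folded into the complex/limiting discussion rather than the generic real construction.
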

\begin{proof}
\begin{figure}
    \centering
\definecolor{qqqqff}{rgb}{0.,0.,1.}
\definecolor{uuuuuu}{rgb}{0.26666666666666666,0.26666666666666666,0.26666666666666666}
\definecolor{xdxdff}{rgb}{0.49019607843137253,0.49019607843137253,1.}
\definecolor{ududff}{rgb}{0.30196078431372547,0.30196078431372547,1.}
\begin{tikzpicture}[scale=2]
\clip(-1.5,-1.5) rectangle (1.5,1.5);
\draw [shift={(-1.,0.)},line width=0.5pt] (0,0) -- (-30.:0.20765031588921398) arc (-30.:0.:0.20765031588921398) -- cycle;
\draw [shift={(0.5,-0.8660254037844386)},line width=0.5pt] (0,0) -- (120.:0.20765031588921398) arc (120.:150.:0.20765031588921398) -- cycle;
\draw [shift={(-1.,0.)},line width=0.5pt] (0,0) -- (-60.:0.20765031588921398) arc (-60.:-30.:0.20765031588921398) -- cycle;
\draw [shift={(0.5,-0.8660254037844386)},line width=0.5pt] (0,0) -- (150.:0.20765031588921398) arc (150.:180.:0.20765031588921398) -- cycle;
\draw [shift={(-1.,0.)},line width=0.5pt] (-30.:0.20765031588921398) arc (-30.:0.:0.20765031588921398);
\draw[line width=0.5pt] (-0.8144683072671108,-0.04971306723815671) -- (-0.7843820868239397,-0.05777464570920912);
\draw [shift={(0.5,-0.8660254037844386)},line width=0.5pt] (120.:0.20765031588921398) arc (120.:150.:0.20765031588921398);
\draw[line width=0.5pt] (0.3641813745052674,-0.7302067782897061) -- (0.34215673253314866,-0.7081821363175874);
\draw [shift={(-1.,0.)},line width=0.5pt] (-60.:0.20765031588921398) arc (-60.:-30.:0.20765031588921398);
\draw[line width=0.5pt] (-0.8572593115703206,-0.1285242931479413) -- (-0.8341121729060481,-0.14936607041517516);
\draw[line width=0.5pt] (-0.8714757068520587,-0.1427406884296794) -- (-0.8506339295848251,-0.1658878270939518);
\draw [shift={(0.5,-0.8660254037844386)},line width=0.5pt] (150.:0.20765031588921398) arc (150.:180.:0.20765031588921398);
\draw[line width=0.5pt] (0.3121207910922474,-0.8260904451297231) -- (0.28165389235044963,-0.8196145058884176);
\draw[line width=0.5pt] (0.31732435291560485,-0.8066704880246275) -- (0.2877012750100274,-0.7970453665500635);
\draw [line width=1.pt] (0.,0.) circle (1.cm);
\draw [samples=100,rotate around={-90.:(-0.25,0.)},xshift=-0.25cm,yshift=0.cm,line width=1.pt,domain=-4.0:4.0)] plot (\x,{(\x)^2/2/0.5});
\draw [line width=1.pt,color=black] (-1.,0.)-- (0.,0.);
\draw [line width=1.pt,color=black] (0.,0.)-- (0.5,-0.8660254037844386);
\draw [line width=1.pt] (-0.5,-2.953924008264976) -- (-0.5,3.560314787585157);
\draw [line width=1.pt,color=black] (-1.,0.)-- (-0.5,-0.8660254037844386);
\draw [line width=1.pt,color=black] (-0.5,-0.8660254037844386)-- (0.5,-0.8660254037844386);
\draw [line width=1.pt,color=qqqqff] (-1.,0.)-- (0.5,-0.8660254037844386);
\draw [line width=1.pt,color=qqqqff] (-1.,0.)-- (0.5,0.8660254037844386);
\begin{scriptsize}
\draw[color=black] (-0.8,0.8) node {$\mathcal{D}$};
\draw[color=black] (-0.4,1.3) node {$\ell$};
\draw[color=black] (1.3,1.1) node {$\mathcal{P}$};
\draw [fill=black] (0.,0.) circle (1.0pt);
\draw[color=black] (0.06,0.14) node {$F$};
\draw [fill=black] (-1.,0.) circle (1.0pt);
\draw[color=black] (-1.15,0.1) node {$A$};
\draw [fill=black] (0.5,0.8660254037844386) circle (1.0pt);
\draw[color=black] (0.47869088091291345,1.0335573837588032) node {$B$};
\draw [fill=black] (0.5,-0.8660254037844386) circle (1.0pt);
\draw[color=black] (0.46,-1) node {$D$};
\draw [fill=black] (-0.5,-0.8660254037844386) circle (1.0pt);
\draw[color=black] (-0.6,-1.) node {$G$};
\end{scriptsize}
\end{tikzpicture}
    \caption{A geometric proof of $4$-isoperiodicity.}
    \label{fig.8}
\end{figure}
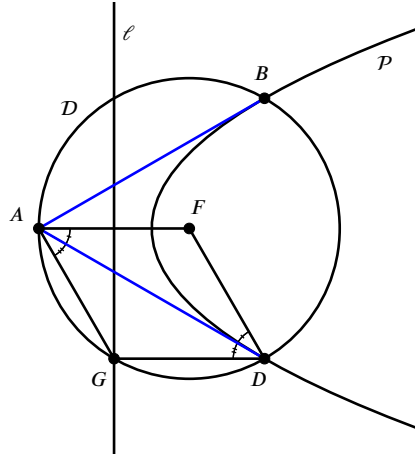
Consider a circle $\mathcal{D}$ with the center at $F$ as shown in the Figure \ref{fig.8}. Denote its intersection points with the parabola $\mathcal{P}$ by $B$ and $D$. Denote its intersection point with the axis of the parabola which lies outside of the parabola by $A$. Denote by $H$ the orthogonal projection of $D$ to the directrix $\ell$. Then $\parallelogram AGDF$ is a rhombus by the focal property (i) of Lemma \ref{lemm.3.1}. Thus, its diagonal $AD$ is the bisector of the angle $GDF$. Thus, it follows from the focal property (ii) of Lemma \ref{lemm.3.1} that $AD$ is tangent to the parabola $\mathcal{P}$ at $D$. 

Thus, $ABAD$ is a $4$-Poncelet polygon inscribed in $\mathcal{D}(F)$ and circumscribed about $\mathcal{P}$. By the Poncelet theorem, this is true for all polygonal lines that start from any point of $\mathcal{D}$ other than $A$ as well.

The same holds for any parabola $\mathcal{P}'$ confocal to $\mathcal{P}$.
\end{proof}

Motivated by the Theorems \ref{thm.3.2} and \ref{thm.3.5}, we now ask the following question:\\
    \textit{Does there exist $n \geq 3$ such that the confocal family $\mathcal{F}$ is $n$-isoperiodic with a circle $\mathcal{D}(E)$ for some $E\in \mathbb{R}^2\setminus\Sigma$ and $n \neq 3,4$?}

We will answer this question in the next section, see Theorem \ref{thm.4.5}.

\section{Cyclic Pentagons, Hexagons, and Heptagons circumscribing Parabolas from a Confocal Family}\label{sec.4}
\subsection{Pentagons}\label{sec.4.1} 
For $n=5$, we have the following (recalling $\Sigma$ from eq. \eqref{eq.3.2.3}):
\begin{proposition}\label{prop.4.1} A circle $\mathcal{D}(E)$ and a parabola $\mathcal{P}(p)\in \mathcal{F}$ form a $5$-Poncelet pair $(\mathcal{D}(E),\mathcal{P}(p))$ if and only if $E\in \mathbb{R}^2\setminus \Sigma$ and
\begin{eqnarray}\label{eq.4.1.1}
\mathcal{Q}^5_2 (x_E,y_E,p)&:=&4(x_E^2 + y_E^2) p^2 + 4x_E (x_E^2 + y_E^2 - 1)p -(x_E^2 + y_E^2 - 1)^3=0.
\end{eqnarray}
\end{proposition}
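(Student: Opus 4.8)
The plan is to run, in the first genuinely new case $n=5$, the same scheme that produced Propositions \ref{prop.3.1} and \ref{prop.3.2}. The necessity of $E\in\mathbb{R}^2\setminus\Sigma$ comes for free: by Corollary \ref{cor.3.3} any $n$-Poncelet pair with $n\neq 3,4$ already forces the center outside $\Sigma$, and $5\neq 3,4$. So the whole content is the equivalence, away from $\Sigma$, of the Cayley condition with $\mathcal{Q}^5_2=0$. Since $5=2m+1$ with $m=2$, the odd Cayley condition \eqref{eq.2.0.1} collapses to the single $2\times 2$ determinant
\[
\begin{vmatrix} A_2 & A_3\\ A_3 & A_4\end{vmatrix}=A_2A_4-A_3^2=0 .
\]
Because $5$ is prime there is no $k$ with $k\mid 5$ and $3\le k<5$, so by Proposition \ref{prop.6.1} no spurious factor must be divided out ($\mathcal{Q}^5=\tilde{\mathcal{Q}}^5$); it therefore suffices to show that $A_2A_4-A_3^2$ is a nonzero scalar multiple of $\mathcal{Q}^5_2$.

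First I would collect the lower coefficients. Writing $s:=x_E^2+y_E^2$ and $A_0=\sqrt{\Delta_2}$ with $A_0^2=\Delta_2=-p^2$, Lemma \ref{lemm.2.1} applied to the explicit coefficients \eqref{eq.2.1.11}--\eqref{eq.2.1.14} gives $\tilde A_1=-p(p+x_E)$, $\tilde A_2=s-1$ and $\tilde A_3=-\tfrac3p\,\mathcal{Q}^4_1$, with $\mathcal{Q}^4_1=ps+x_E(s-1)$ from \eqref{eq.3.2.1}. Feeding these into Corollary \ref{cor.2.1} (eq. \eqref{eq.2.1.7}) for $k=3$ yields
\[
\tilde A_4=-\frac{1}{A_0^2}\bigl(4\tilde A_1\tilde A_3+3\tilde A_2^2\bigr)=\frac{3}{p^2}\Bigl(4(p+x_E)\mathcal{Q}^4_1+(s-1)^2\Bigr).
\]
Passing from $A_k$ to $\tilde A_k$ via $A_k=\tilde A_k/(k!\,A_0)$ (equivalently, invoking Lemma \ref{lemm.6.1}) turns the determinant into
\[
A_2A_4-A_3^2=\frac{1}{144\,A_0^2}\bigl(3\tilde A_2\tilde A_4-4\tilde A_3^2\bigr),
\]
so everything reduces to identifying the polynomial $3\tilde A_2\tilde A_4-4\tilde A_3^2$.

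The one step that must be \emph{seen} rather than merely expanded is the degree drop of this polynomial. Substituting the expressions above,
\[
3\tilde A_2\tilde A_4-4\tilde A_3^2=\frac{9}{p^2}\Bigl(4(s-1)(p+x_E)\mathcal{Q}^4_1+(s-1)^3-4(\mathcal{Q}^4_1)^2\Bigr),
\]
and the bracket equals $4\mathcal{Q}^4_1\bigl[(s-1)(p+x_E)-\mathcal{Q}^4_1\bigr]+(s-1)^3$. Here one uses the linear identity
\[
(s-1)(p+x_E)-\mathcal{Q}^4_1=-p,
\]
immediate from $\mathcal{Q}^4_1=ps+x_E(s-1)$, which annihilates the $\mathcal{Q}^4_1$-quadratic and leaves $-4p\,\mathcal{Q}^4_1+(s-1)^3$, i.e. exactly $-\mathcal{Q}^5_2$. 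Tracking the prefactors with $A_0^2=-p^2$ then gives the clean identity $A_2A_4-A_3^2=\tfrac{1}{16p^4}\mathcal{Q}^5_2$, so for $p\neq0$ the Cayley condition is equivalent to $\mathcal{Q}^5_2=0$, as asserted. The hard part is thus not any single computation but spotting that linear identity; once it is in hand the remainder is bookkeeping, the only delicacy being to keep the powers of $p$ and the (immaterial, since it appears only squared) branch of $\sqrt{-p^2}$ consistent.
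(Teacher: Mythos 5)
Your proposal is correct and follows essentially the same route as the paper: the paper's proof simply asserts the identity $A_2A_4-A_3^2=\tfrac{1}{16p^4}\mathcal{Q}^5_2$ and concludes, while you derive that same identity explicitly from the recursion of Lemma \ref{lemm.2.1} (your values $\tilde A_1=-p(p+x_E)$, $\tilde A_2=s-1$, $\tilde A_3=-\tfrac{3}{p}\mathcal{Q}^4_1$, $\tilde A_4=\tfrac{3}{p^2}\bigl(4(p+x_E)\mathcal{Q}^4_1+(s-1)^2\bigr)$ and the final prefactor all check out). The handling of the $\Sigma$-exclusion via Corollary \ref{cor.3.3} and of the absence of spurious divisor factors via the primality of $5$ is likewise consistent with the paper.
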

\begin{proof} We calculate the determinant:
    \begin{eqnarray}\label{eq.4.1.2}
\left|\begin{array}{ccc}
    A_{2}(x_E,y_E,p) & A_{3}(x_E,y_E,p)\\
     A_{3}(x_E,y_E,p) & A_{4}(x_E,y_E,p)
\end{array}\right| =\frac{1}{16p^4}\mathcal{Q}^5_2 (x_E,y_E,p).
\end{eqnarray}
Therefore, the condition $\mathcal{Q}^5_2 (x_E,y_E,p)=0$ is equivalent to the Cayley condition eq. \eqref{eq.2.0.1} for $n=5$.
\end{proof}

\begin{theorem}\label{thm.4.1}
A confocal family of parabolas is not $5$-isoperiodic with a circle.
\end{theorem}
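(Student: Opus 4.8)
The plan is to upgrade the single Cayley equation into a polynomial identity in $p$ and then show the resulting coefficient system is inconsistent. By Definition \ref{def.1.2}, $\mathcal{F}$ being $5$-isoperiodic with $\mathcal{D}(E)$ means that $(\mathcal{D}(E),\mathcal{P}(p))$ is a $5$-Poncelet pair for \emph{every} $p\in\mathbb{R}^*$. By Proposition \ref{prop.4.1}, this forces $\mathcal{Q}^5_2(x_E,y_E,p)=0$ to hold for all $p\in\mathbb{R}^*$. Since $\mathcal{Q}^5_2$ is, by eq. \eqref{eq.4.1.1}, a polynomial in $p$ of degree at most two, and it vanishes on the infinite set $\mathbb{R}^*$, it must be the zero polynomial in $\mathbb{R}[p]$. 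The whole argument then reduces to examining its coefficients.

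First I would read off the coefficients of $\mathcal{Q}^5_2$ from eq. \eqref{eq.4.1.1}, abbreviating $r:=x_E^2+y_E^2$: the coefficient of $p^2$ is $4r$, the coefficient of $p^1$ is $4x_E(r-1)$, and the constant term is $-(r-1)^3$. Requiring all three to vanish, the leading coefficient gives $r=0$, i.e. $E=(0,0)=F$; but then the constant term equals $-(0-1)^3=1\neq 0$, a contradiction. Hence no circle $\mathcal{D}(E)$ can be $5$-isoperiodic with $\mathcal{F}$.

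An equivalent route, more in keeping with the structure already in place, is to invoke Corollary \ref{cor.3.4}: if $\mathcal{F}$ were $n$-isoperiodic with $\mathcal{D}(E)$ for $n=5\neq 3,4$, then necessarily $x_E=0$ and $y_E\in\mathbb{R}\setminus\{0,\pm 1\}$. Substituting $x_E=0$ into eq. \eqref{eq.4.1.1} collapses $\mathcal{Q}^5_2$ to $4y_E^2p^2-(y_E^2-1)^3$, whose identical vanishing in $p$ forces $4y_E^2=0$, i.e. $y_E=0$, contradicting $y_E\neq 0$. This second version has the advantage of reusing the locus restriction established in Section \ref{sec.3}, and the same template should transfer cleanly to the $n=6$ and $n=7$ cases.

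There is essentially no hard step here. The only conceptual point is the observation that isoperiodicity promotes the single Cayley condition into the demand that a polynomial in $p$ vanish identically; once that is recognized, the leading coefficient (or, in the reduced form, the constant term) alone already produces the contradiction. Everything else — extracting the three coefficients of $\mathcal{Q}^5_2$ — is routine, so the main ``obstacle'' is simply setting up the identity correctly rather than any genuine difficulty in the algebra.
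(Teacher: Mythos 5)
Your proposal is correct and follows essentially the same route as the paper: both reduce $5$-isoperiodicity to the requirement that the quadratic $\mathcal{Q}^5_2(x_E,y_E,p)\in\mathbb{R}[p]$ vanish identically and then kill that possibility via its leading coefficient $4(x_E^2+y_E^2)$. The only cosmetic difference is that the paper excludes $E=(0,0)$ by citing $E\in\mathbb{R}^2\setminus\Sigma$ from Proposition \ref{prop.4.1}, whereas you rule it out directly by noting the constant term equals $1$ there; both are valid.
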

\begin{proof}
The leading coefficient of the polynomial $\mathcal{Q}^5_2 (x_E,y_E,p)\in \mathbb{R}[p]$ is equal to 0 if and only if $(x_E,y_E)=(0,0)$. So, $\mathcal{Q}^5_2 (x_E,y_E,p)$ is not the zero polynomial in $\mathbb{R}[p]$ for any $(x_E,y_E)\in \mathbb{R}^2\setminus \Sigma$, thus, cannot be satisfied by every $p\in \mathbb{R}^*$. 

Alternatively, for every $(x_E,y_E)\in \mathbb{R}^2\setminus\Sigma$, the polynomial $\mathcal{Q}^5_2 (x_E,y_E,p)\in \mathbb{R}[p]$ is quadratic in $p$, can have at most two zeros. The proof is complete. 
\end{proof}

The polynomial $\mathcal{Q}^5_2 (x,y,p)$ is quadratic in $p$ and its discriminant is given by
\begin{eqnarray}\label{eq.4.1.3}
\mathrm{Disc}_p(\mathcal{Q}^5_2)(x,y)&=&16(x^2 + y^2-1)^2 (x^2 + y^2 - y) (x^2 + y^2 + y).
\end{eqnarray}

Let $Q(x,y)\in \mathbb{Q}[x,y]$ be a polynomial and $\Gamma$ be the following algebraic curve defined by $Q(x,y)$, that is, 
\begin{eqnarray*}
\Gamma&=& \left\{(x,y)\in \mathbb{R}^2 \mid Q(x,y)=0\right\}.
\end{eqnarray*}
In what follows, we will use the following notations:
\begin{eqnarray*}
\Gamma_{+} &=& \left\{(x,y)\in \mathbb{R}^2 \mid Q(x,y) > 0 \right\},\\
\Gamma_{-} &=& \left\{(x,y)\in \mathbb{R}^2 \mid Q(x,y) < 0 \right\}.
\end{eqnarray*}

We also define 
\begin{eqnarray*}
\Gamma^{5}= \left\{(x,y)\in \mathbb{R}^2~|~x^2+\left(y \pm \frac{1}{2}\right)^2 = \frac{1}{4}\right\}.
\end{eqnarray*}

Based on the discriminant, we have the following:

\begin{proposition}\label{prop.4.2}
Given a confocal family of parabolas $\mathcal F$, there exist(s)
\begin{itemize}
\item[a)] two $5$-Poncelet pairs $(\mathcal{D}(E),\mathcal{P}(p_{\pm}))$ corresponding to the distinct real zeros of $\mathcal{Q}^5_2(x_E,y_E,p)$ if $E\in \Gamma ^5_{+}\setminus S^1$; the zeros are given by
\begin{eqnarray} \label{eq.4.1.4}
p_{\pm}(E) =\frac{\left(-x_E\pm  \sqrt{\left(x_E^2+y_E^2\right)^2-y_E^2}\right)(x_E^2+y_E^2-1)}{2(x_E^2 + y_E^2)}.
\end{eqnarray}

\item[b)] one $5$-Poncelet pair $(\mathcal{D}(E),\mathcal{P}(p))$ if $E \in \Gamma^{5}\setminus\Sigma$ where
\begin{eqnarray}\label{eq.4.1.5}
p(E)=p_{+}(E)=p_{-}(E)=\frac{\sqrt{y_E}\left(1-y_E\right)^{\frac{3}{2}}}{2y_E},~0<y_E<1;
\end{eqnarray}
\item[c)] no real $5$-Poncelet pairs if $E\in \Gamma^5_{-}$.
\end{itemize}
\end{proposition}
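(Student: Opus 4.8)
The plan is to fix the center $E=(x_E,y_E)$ and use Proposition \ref{prop.4.1}, which identifies $5$-Poncelet pairs $(\mathcal{D}(E),\mathcal{P}(p))$ with real nonzero roots $p$ of the quadratic $\mathcal{Q}^5_2(x_E,y_E,p)$ over centers $E\in\mathbb{R}^2\setminus\Sigma$. Since $E\neq(0,0)$ on each of the three regions in question, the leading coefficient $4(x_E^2+y_E^2)$ is strictly positive and $\mathcal{Q}^5_2$ is a genuine quadratic in $p$, so the count of its real roots is controlled by the sign of the discriminant in eq. \eqref{eq.4.1.3}. First I would verify that all three regions avoid $S^1$: region a) does so by hypothesis; region c) does because on $S^1$ one has $(x^2+y^2-y)(x^2+y^2+y)=1-y^2\geq 0$, so $\Gamma^5_-\cap S^1=\emptyset$; region b) does because $\Gamma^5\setminus\Sigma$ removes $S^1$ by definition. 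Hence the factor $(x_E^2+y_E^2-1)^2$ in eq. \eqref{eq.4.1.3} is strictly positive throughout, and the sign of $\mathrm{Disc}_p(\mathcal{Q}^5_2)$ agrees with the sign of $Q:=(x^2+y^2-y)(x^2+y^2+y)$, i.e.\ with membership in $\Gamma^5_+$, $\Gamma^5$, or $\Gamma^5_-$.

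For part a), $E\in\Gamma^5_+\setminus S^1$ makes the discriminant strictly positive, giving two distinct real roots; the quadratic formula with $a=4(x_E^2+y_E^2)$, $b=4x_E(x_E^2+y_E^2-1)$, $c=-(x_E^2+y_E^2-1)^3$, after pulling the common factor $(x_E^2+y_E^2-1)$ out of the numerator, reproduces exactly the two values $p_\pm$ in eq. \eqref{eq.4.1.4}. To see these are genuine parabolas I would check $p_\pm\neq 0$ via the product of roots $p_+p_-=c/a=-(x_E^2+y_E^2-1)^3/\big(4(x_E^2+y_E^2)\big)$, which is nonzero precisely because $E\notin S^1$. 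For part c), $E\in\Gamma^5_-$ forces $Q<0$, hence a negative discriminant and no real roots, so no real $5$-Poncelet pairs; here $(0,0)\notin\Gamma^5_-$ since $Q(0,0)=0$, so the region automatically lies in $\mathbb{R}^2\setminus\Sigma$.

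The only computational point is part b). On $\Gamma^5\setminus\Sigma$ the discriminant vanishes, so $\mathcal{Q}^5_2$ has the double root $-b/(2a)=-x_E(x_E^2+y_E^2-1)/\big(2(x_E^2+y_E^2)\big)$, the common value of $p_+=p_-$ obtained by setting the radical in eq. \eqref{eq.4.1.4} to zero. To reach the closed form in eq. \eqref{eq.4.1.5}, I would parametrize the upper component $x^2+y^2=y$ of $\Gamma^5$, on which $x_E^2+y_E^2-1=y_E-1$ and $x_E^2=y_E(1-y_E)$; substituting yields $p=x_E(1-y_E)/(2y_E)=\sqrt{y_E}(1-y_E)^{3/2}/(2y_E)$ for $0<y_E<1$ on the branch $x_E>0$ (the sign of $p$ tracking that of $x_E$), while the $y\mapsto -y$ invariance recorded in eq. \eqref{eq.2.1.15} handles the lower component $x^2+y^2=-y$. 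The essential subtlety, and the reason $\Sigma$ must be excluded, lives at the boundary: on $S^1$ the discriminant also vanishes, but the corresponding double root is $p=0$, not a valid parabola, so the endpoints $y_E\in\{0,1\}$ --- where $E$ collapses to the origin or to $(0,1)\in S^1$ --- are forbidden. This is precisely why a) is stated over $\Gamma^5_+\setminus S^1$ and b) over $\Gamma^5\setminus\Sigma$, and it guarantees $p\neq 0$ on the open arc $0<y_E<1$.
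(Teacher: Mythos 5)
Your proof is correct and follows essentially the same route as the paper's: reduce to the sign of $\mathrm{Disc}_p(\mathcal{Q}^5_2)$ from eq.~\eqref{eq.4.1.3}, apply the quadratic formula for the roots in case a), and parametrize the component $x^2+y^2=y$ of $\Gamma^5$ to obtain the double root in case b). You are in fact somewhat more careful than the printed proof on the peripheral checks (that each region avoids $\Sigma$, that $p_\pm\neq 0$ via the product of roots, and the sign/branch bookkeeping on $\Gamma^5$), all of which are verified correctly.
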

\begin{proof}
The proofs of a) and c) go as follows. 

Take $E \in \mathbb{R}^2\setminus \Sigma$. Then for every $E \in \Gamma^{5}_{+}\setminus S^1$, $\mathrm{Disc}_p(\mathcal{Q}^5_2)(x_E,y_E)>0$. The zeros of $\mathcal{Q}^5_2(x_E,y_E,p)$ in $p$ are given by the eq. \eqref{eq.4.1.4}. See Figure \ref{fig.9}.

Similarly, for every $E \in \Gamma^{5}_{-}$, $\mathrm{Disc}_p(\mathcal{Q}^5_2)(x_E,y_E)<0$, so the zeros are complex conjugates.

For every $E \in \Gamma_{5}$, $\mathrm{Disc}_p(\mathcal{Q}^5_2)(x_E,y_E)=0$. The zeros of $\mathcal{Q}^5_2(x_E,y_E,p)$ in $p$ are given by the eq. \eqref{eq.4.1.5}. This proves b).

Furthermore, it can be easily shown that
\begin{itemize}
\item for every $E \in S^1_{+}$,
\begin{itemize}
    \item[] $p_{+}(E)>0>p_{-}(E)$ if $x_E \neq 0$;
    \item[] $p_{-}(E)=-p_{+}(E)$ if $x_E=0$;
\end{itemize}
\item for every $E \in \Gamma^{5}_{+} \cap S^1_{-}$,
\begin{equation*}
    p_{-}(E)>p_{+}(E)>0\quad \textrm{if} ~x_E\neq 0;
\end{equation*} 
\item for every $E \in \Gamma^{5}\setminus\Sigma$,
\begin{equation*}
    \mathrm{sign}(p(E)=\mathrm{sign}(x_E).
\end{equation*}
\end{itemize}
\end{proof}

\begin{figure}
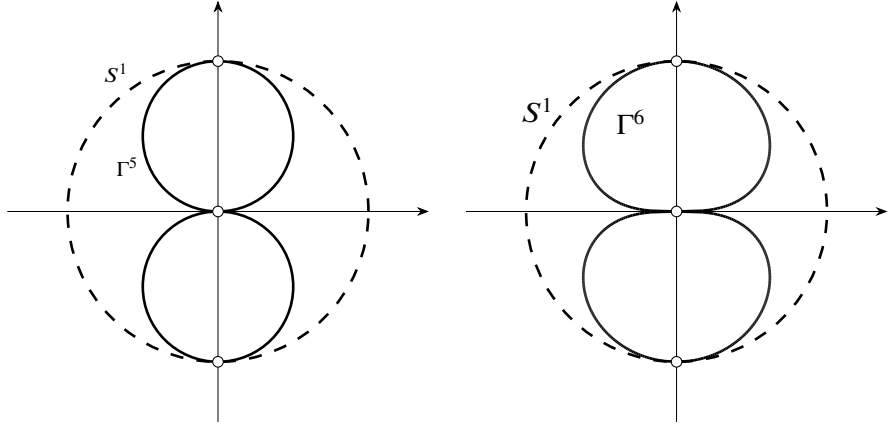

    \centering
% [inline block 0: 2 envs, 98700 chars -> data_tex | \begin{tikzpicture}[scale=2] \clip(-1.5,-1.5) rectangle (1.5,1.5);...]

\caption{$\Gamma^5$ and $\Gamma^6$.}\label{fig.9}
\end{figure}

\begin{corollary}\label{cor.4.1}
If $\mathcal{F}$ is an $n$-isoperiodic family of parabolas with $\mathcal{D}(E)$ for $n\neq 3,4$, then $x_E=0$ and $y_E \in (-1,0)\cup (0,1)$.
\end{corollary}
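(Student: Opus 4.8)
The plan is to sharpen Corollary \ref{cor.3.4}, which already gives $x_E=0$ and $y_E\in\mathbb{R}\setminus\{0,\pm1\}$, by excluding the range $|y_E|>1$. So I assume, for contradiction, that $\mathcal{F}$ is $n$-isoperiodic with $\mathcal{D}(E)$ for some $n\neq 3,4$ and that $E=(0,y_E)$ with $|y_E|>1$; the goal is to produce a single parabola that already pins down the period.

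The key observation is that on the axis $x_E=0$ the pentagon polynomial of eq.~\eqref{eq.4.1.1} collapses to $\mathcal{Q}^5_2(0,y_E,p)=4y_E^2\,p^2-(y_E^2-1)^3$. Since $|y_E|>1$, the constant term $(y_E^2-1)^3$ is strictly positive, so this quadratic in $p$ has a real nonzero root, namely $p^*=(y_E^2-1)^{3/2}/(2|y_E|)$ (equivalently, $(0,y_E)\in\Gamma^5_{+}\setminus S^1$, so Proposition \ref{prop.4.2}a) already guarantees real roots). Because $E=(0,y_E)\in\mathbb{R}^2\setminus\Sigma$ (as $|y_E|>1$ keeps $E$ off $S^1$ and off the origin) and $\mathcal{Q}^5_2(0,y_E,p^*)=0$, Proposition \ref{prop.4.1} applies verbatim and tells me that $(\mathcal{D}(0,y_E),\mathcal{P}(p^*))$ is a \emph{genuine} $5$-Poncelet pair. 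Here I use that $5$ is prime, so no proper divisor $k$ with $3\le k<5$ occurs and $\mathcal{Q}^5$ is not reduced by any lower Cayley factor.

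Now I invoke $n$-isoperiodicity: since $(\mathcal{D}(E),\mathcal{P}(p))$ is an $n$-Poncelet pair for every $p\in\mathbb{R}^*$, it is in particular one for $p=p^*$. Thus the single pair $(\mathcal{D}(0,y_E),\mathcal{P}(p^*))$ would be simultaneously a genuine $5$-Poncelet pair and a genuine $n$-Poncelet pair. By Poncelet's theorem the inscribed–circumscribed polygons close after one and the same number of steps independently of the starting vertex, so—under the convention adopted in Section \ref{sec.2} that a polygon traversed several times is not counted as a distinct Poncelet polygon—a fixed pair has a unique minimal Poncelet period. Hence $n=5$, i.e. $\mathcal{F}$ is $5$-isoperiodic with $\mathcal{D}(0,y_E)$, contradicting Theorem \ref{thm.4.1}. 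Therefore $|y_E|<1$, which together with $y_E\neq 0$ gives $y_E\in(-1,0)\cup(0,1)$.

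The main obstacle I expect is the justification of that last step: that one pair cannot be genuinely both a $5$- and an $n$-Poncelet pair for $n\neq 5$. One has to make precise that $\mathcal{Q}^5(0,y_E,p^*)=0$ forces the associated Poncelet (torsion) data to have order exactly $5$, while $\mathcal{Q}^n=0$ forces order exactly $n$; in particular one should check that at $p^*$ the pair is nondegenerate, so that the $5$-period is genuine and does not collapse to order $1$. An alternative that avoids the uniqueness-of-period principle would be to verify directly that $\mathcal{Q}^n(0,y_E,p^*)\neq 0$, but this is impractical since $\mathcal{Q}^n$ is inaccessible for general $n$; hence the route through the already-established Theorem \ref{thm.4.1} is the efficient one, and securing uniqueness of the period is the crux.
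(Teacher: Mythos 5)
Your proof is correct and follows the route the paper intends: Corollary \ref{cor.4.1} is stated without proof immediately after Proposition \ref{prop.4.2}, and your argument—combining Corollary \ref{cor.3.4} with the restriction $\mathcal{Q}^5_2(0,y_E,p)=4y_E^2p^2-(y_E^2-1)^3$, which has a nonzero real root precisely when $|y_E|>1$, and then excluding that case via Theorem \ref{thm.4.1} and the uniqueness of the Poncelet period—is exactly the natural completion of that. The uniqueness-of-period principle you flag as the crux is the same one the paper already relies on implicitly (e.g.\ in the proofs of Corollaries \ref{cor.3.1} and \ref{cor.3.4}), so no justification beyond what the paper itself assumes is required.
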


\subsection{Hexagons }\label{sec.4.2}
For $n=6$, we have:
\begin{proposition}\label{prop.4.3} A circle $\mathcal{D}(E)$  and a parabola $\mathcal{P}(p)\in \mathcal{F}$ form a $6$-Poncelet pair $(\mathcal{D}(E),\mathcal{P}(p))$ if and only if $E\in \mathbb{R}^2\setminus \Sigma$ and
\begin{eqnarray}\label{eq.4.2.1}
\mathcal{Q}^6_2 (x_E,y_E,p)&:=&4(x_E^2 + y_E^2) (x_E^2 + y_E^2 + 1) p^2 + 4 x_E (2x_E^2+2y_E^2 + 1) (x_E^2 + y_E^2 - 1) p\nonumber\\
&& + (3x_E^2 - y_E^2 + 1) (x_E^2 + y_E^2 - 1)^2=0.
\end{eqnarray}
\end{proposition}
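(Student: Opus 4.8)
The plan is to follow the template of Proposition \ref{prop.4.1} for $n=5$, now invoking Cayley's theorem in the even case $n=2m$ with $m=3$. By eq. \eqref{eq.2.0.2}, the pair $(\mathcal{D}(E),\mathcal{P}(p))$ satisfies the Cayley condition for $n=6$ exactly when the Hankel determinant
\begin{equation*}
\left|\begin{array}{cc} A_3(x_E,y_E,p) & A_4(x_E,y_E,p) \\ A_4(x_E,y_E,p) & A_5(x_E,y_E,p) \end{array}\right| = A_3 A_5 - A_4^2
\end{equation*}
vanishes. So the first step is to obtain $A_3$, $A_4$, $A_5$ as functions of $(x_E,y_E,p)$. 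I already have $A_3$ from Proposition \ref{prop.3.2}. For $A_4$ and $A_5$ I would feed the pencil data \eqref{eq.2.1.11}--\eqref{eq.2.1.14} (that is $\Delta_1=-1$, $\Theta_1=-p^2-2px_E+y_E^2-1$, $\Theta_2=-2p^2-2px_E$, $\Delta_2=-p^2$, $A_0=\sqrt{-p^2}$) into the recursion of Lemma \ref{lemm.2.1}, using the reduced form of Corollary \ref{cor.2.1} for $k\ge 3$; explicitly $\tilde{A}_4 = -A_0^{-2}\bigl(4\tilde{A}_1\tilde{A}_3 + 3\tilde{A}_2^2\bigr)$ and $\tilde{A}_5 = -A_0^{-2}\bigl(5\tilde{A}_1\tilde{A}_4 + 10\tilde{A}_2\tilde{A}_3\bigr)$. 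To avoid the square roots it is cleaner to work throughout with the polynomials $\tilde{A}_k\in\mathbb{R}[x,y]$ and assemble the polynomial determinant $\tilde{\mathcal{Q}}^6$ of \eqref{eq.2.2.1}, which by Lemma \ref{lemm.6.1} equals $A_0^{3}\bigl(A_3A_5-A_4^2\bigr)$.

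Second, I would separate the genuine hexagon from a triangle traversed twice. The only divisor $k$ of $6$ with $3\le k<6$ is $k=3$, so by Proposition \ref{prop.6.1} and the Remark on divisors the raw determinant carries the spurious triangle factor $\tilde{\mathcal{Q}}^3=\tfrac12(x_E^2+y_E^2-1)=\tfrac12\mathcal{Q}^3_0$ coming from Proposition \ref{prop.3.1}. Concretely, I expect $\tilde{\mathcal{Q}}^6$ to factor as a constant multiple of $(x_E^2+y_E^2-1)\cdot\mathcal{Q}^6_2(x_E,y_E,p)$, and dividing out the triangle factor $(x_E^2+y_E^2-1)$ then leaves precisely the quadratic-in-$p$ polynomial $\mathcal{Q}^6_2$ of \eqref{eq.4.2.1}. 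Hence the genuine $6$-Poncelet locus is $\mathcal{Q}^6_2=0$.

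Third, the restriction $E\in\mathbb{R}^2\setminus\Sigma$ is handled cheaply and independently of the algebra: by Corollary \ref{cor.3.3}, if $(\mathcal{D}(E),\mathcal{P}(p))$ is an $n$-Poncelet pair with $n\neq 3,4$ then $E\notin\Sigma$, which excludes both $S^1$ (where only triangles occur, Corollary \ref{cor.3.1}) and the focus $F$ (where only squares occur, Corollary \ref{cor.3.2}). These are exactly the loci on which the factorization of the previous step degenerates, so the quotient $\mathcal{Q}^6_2$ governs the $6$-Poncelet condition precisely on $\mathbb{R}^2\setminus\Sigma$.

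The main obstacle is the algebra of the second step. Computing $A_5$ requires two successive applications of the nonlinear convolution recursion, so $A_3A_5-A_4^2$ is a polynomial of substantial degree in $x_E,y_E,p$; the delicate point is to verify that, after stripping the expected power of $p$ (mirroring the $1/(16p^4)$ normalization in \eqref{eq.4.1.2} for $n=5$), the numerator splits cleanly as $(x_E^2+y_E^2-1)$ times a polynomial that is \emph{exactly} quadratic in $p$, with leading coefficient $4(x_E^2+y_E^2)(x_E^2+y_E^2+1)$, linear coefficient $4x_E(2x_E^2+2y_E^2+1)(x_E^2+y_E^2-1)$, and constant term $(3x_E^2-y_E^2+1)(x_E^2+y_E^2-1)^2$. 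I would confirm this either by polynomial division of $\tilde{\mathcal{Q}}^6$ by $\mathcal{Q}^3_0$ and matching the quotient coefficient by coefficient against \eqref{eq.4.2.1}, or by directly checking that $\mathcal{Q}^3_0$ divides $\tilde{\mathcal{Q}}^6$; in practice this bookkeeping is best carried out with a computer algebra system.
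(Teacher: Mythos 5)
Your proposal follows the same route as the paper: compute the $2\times 2$ Hankel determinant $A_3A_5-A_4^2$ from the recursion of Lemma \ref{lemm.2.1}, observe that (up to a power of $p$) it factors as $\mathcal{Q}^3_0\cdot\mathcal{Q}^6_2$ — the paper records this as eq. \eqref{eq.4.2.2} — and then use $\mathcal{Q}^3_0\neq 0$ on $\mathbb{R}^2\setminus\Sigma$ (via Corollary \ref{cor.3.3}) to reduce the Cayley condition to $\mathcal{Q}^6_2=0$. Your explicit recursion formulas for $\tilde{A}_4$ and $\tilde{A}_5$ are correct, and the remaining verification is exactly the computer-algebra factorization the paper performs.
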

\begin{proof} For $n=6$, we obtain
\begin{eqnarray}\label{eq.4.2.2}
\left| \begin{matrix}
A_3(x_E,y_E,p) & A_4(x_E,y_E,p)\\
A_4(x_E,y_E,p) & A_5(x_E,y_E,p)
\end{matrix}\right|=-\frac{1}{64p^6}\mathcal{Q}^3_0 (x_E,y_E,p)\mathcal{Q}^6_2 (x_E,y_E,p).
\end{eqnarray}
 
Since for any $E\in \mathbb{R}^2\setminus \Sigma$, $\mathcal{Q}^3_0 (x_E,y_E,p) \neq 0$, so we must have $\mathcal{Q}^6_2 (x_E,y_E,p)=0$ for $(\mathcal{D}(E),\mathcal{P}(p))$ to be a 6-poncelet pair.
\end{proof}

\begin{theorem}\label{thm.4.2}
A confocal family of parabolas is not $6$-isoperiodic with a circle.
\end{theorem}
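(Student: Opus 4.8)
The plan is to mirror the strategy used in the proof of Theorem~\ref{thm.4.1}, showing that the defining polynomial $\mathcal{Q}^6_2(x_E,y_E,p)$ supplied by Proposition~\ref{prop.4.3} can never degenerate to the zero polynomial in $\mathbb{R}[p]$ for an admissible center $E$. Recall that $\mathcal{F}$ being $6$-isoperiodic with $\mathcal{D}(E)$ means, by Definition~\ref{def.1.2}, that $(\mathcal{D}(E),\mathcal{P}(p))$ is a $6$-Poncelet pair for \emph{every} $p\in\mathbb{R}^*$. By Proposition~\ref{prop.4.3} this forces simultaneously $E\in\mathbb{R}^2\setminus\Sigma$ and $\mathcal{Q}^6_2(x_E,y_E,p)=0$ identically in $p$.

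First I would read off the leading coefficient of $\mathcal{Q}^6_2$ regarded as a quadratic in $p$: by \eqref{eq.4.2.1} it equals $4(x_E^2+y_E^2)(x_E^2+y_E^2+1)$. Since $x_E^2+y_E^2+1>0$ for all $(x_E,y_E)\in\mathbb{R}^2$, this leading coefficient vanishes exactly when $x_E^2+y_E^2=0$, i.e. when $(x_E,y_E)=(0,0)$, which is precisely $E=F\in\Sigma$. Consequently, for every $E\in\mathbb{R}^2\setminus\Sigma$ the expression $\mathcal{Q}^6_2(x_E,y_E,p)$ is a genuine degree-two polynomial in $p$, so it has at most two real roots and cannot vanish for all $p\in\mathbb{R}^*$. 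This contradicts $6$-isoperiodicity and finishes the proof.

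As an independent cross-check that simultaneously sharpens the statement, I would invoke Corollary~\ref{cor.4.1}: any candidate $6$-isoperiodic center must satisfy $x_E=0$ and $y_E\in(-1,0)\cup(0,1)$. Setting $x_E=0$ in \eqref{eq.4.2.1} kills the linear term and yields
\[
\mathcal{Q}^6_2(0,y_E,p)=4y_E^2(y_E^2+1)\,p^2+(1-y_E^2)^3.
\]
For $y_E\in(-1,0)\cup(0,1)$ both coefficients are strictly positive, so $\mathcal{Q}^6_2(0,y_E,p)>0$ for every real $p$; hence there is not even a single real $6$-Poncelet pair with such a center, let alone a whole isoperiodic family.

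I do not expect a genuine obstacle here: once Proposition~\ref{prop.4.3} furnishes the explicit coefficients, the conclusion follows immediately from the sign-definiteness of the leading coefficient $4(x_E^2+y_E^2)(x_E^2+y_E^2+1)$. The only point that needs care is the bookkeeping of the excluded locus $\Sigma$, namely verifying that the unique center $(x_E,y_E)=(0,0)$ at which the leading coefficient degenerates is exactly the one already ruled out by Proposition~\ref{prop.4.3} (and, more precisely, by Corollary~\ref{cor.3.2}, where $E=F$ is $4$-isoperiodic rather than $6$-isoperiodic).
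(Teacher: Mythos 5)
Your proposal is correct and uses essentially the same argument as the paper, whose proof of Theorem~\ref{thm.4.2} simply refers back to the leading-coefficient argument of Theorem~\ref{thm.4.1}: since $4(x_E^2+y_E^2)(x_E^2+y_E^2+1)\neq 0$ for $E\in\mathbb{R}^2\setminus\Sigma$, the quadratic $\mathcal{Q}^6_2(x_E,y_E,\cdot)$ has at most two roots and cannot vanish for all $p\in\mathbb{R}^*$. Your additional cross-check via Corollary~\ref{cor.4.1} (showing $\mathcal{Q}^6_2(0,y_E,p)>0$ for $y_E\in(-1,0)\cup(0,1)$) is a correct, if unnecessary, bonus.
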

\begin{proof}
This follows from the eq. \eqref{eq.4.2.1} in Proposition \ref{prop.4.3}, in a manner similar to the proof of Theorem \ref{thm.4.1}.
\end{proof}

For every $(x,y)\in \mathbb{R}^2\setminus \Sigma$, $\mathcal{Q}^6_2 (x,y,p)\in \mathbb{R}[p]$ is quadratic in $p$; its discriminant in $p$ is given by
\begin{equation}\label{eq.4.2.3}
\mathrm{Disc}_p(\mathcal{Q}^6_2)(x,y)=16(x^2 + y^2 - 1)^2 \left((x^2 + y^2)^3 - y^2\right).
\end{equation}

Let $\Gamma^6, \Lambda^6$ denote the following sets:
\begin{eqnarray*}
\Gamma^6&=&\{(x,y) \in \mathbb{R}^2~|~(x^2+y^2)^3-y^2=0\},\\
\Lambda^6&=&\{(x,y) \in \mathbb{R}^2~|~3x^2-y^2+1=0\}.
\end{eqnarray*}

The following proposition finds the number of $6$-Poncelet pairs by specifying the nature of zeros of $\mathcal{Q}^6_2(x_E,y_E,p)\in \mathbb{R}[p]$ for  $\mathbb{R}^2\setminus\Sigma$. 

\begin{proposition}\label{prop.4.4}
Given a confocal family of parabolas $\mathcal F$, for every $E\in \mathbb{R}^2\setminus \Sigma$, there exist(s)
\begin{itemize}
    \item [a)] two $6$-Poncelet pairs $(\mathcal{D}(E),\mathcal{P}(p_{\pm}))$ corresponding to two distinct real zeros $p_{\pm}$ if $E\in \Gamma^6_{+}\setminus \Sigma$; the zeros are given by
    \begin{eqnarray}\label{eq.4.2.4}
p_{\pm}(E)=\frac{\left(-x_E (2x_E^2 + 2y_E^2 + 1) \pm  \sqrt{(x_E^2 + y_E^2)^3 - y_E^2}\right)(x_E^2 + y_E^2 - 1)}{2(x_E^2+y_E^2)(x_E^2+y_E^2+1)}.
\end{eqnarray}
    \item [b)] one $6$-Poncelet pair $(\mathcal{D}(E),\mathcal{P}(p))$ corresponding to the real zero with multiplicity 2 if $E\in \Gamma^6\setminus\Sigma$ satisfying 
    \begin{equation*}
        p(E)=p_{+}(E)=p_{-}(E),\quad \mathrm{sign}(p_{\pm})=\mathrm{sign}(x_E);
    \end{equation*}
    and the zero is given by
    \begin{eqnarray}\label{eq.4.2.5}
p(E)=-\frac{\sqrt{1-y_E^{\frac{4}{3}}} \left(2y_E^{\frac{2}{3}} + 1\right) \left(y_E^{\frac{2}{3}} - 1\right)}{2y_E^{\frac{1}{3}}\left(y_E^{\frac{2}{3}}+1\right)},~ y_E\in (0,1);
\end{eqnarray}
\item[c)] no real $6$-Poncelet pair if $E\in \Gamma^6_{-}$.
\end{itemize}
\end{proposition}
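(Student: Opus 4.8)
The plan is to reduce everything to the analysis of a single quadratic in $p$ and then read off the three cases from the sign of its discriminant, exactly as in the proof of Proposition~\ref{prop.4.2} for $n=5$. By Proposition~\ref{prop.4.3}, for $E\in\mathbb{R}^2\setminus\Sigma$ the pair $(\mathcal{D}(E),\mathcal{P}(p))$ is a $6$-Poncelet pair if and only if $\mathcal{Q}^6_2(x_E,y_E,p)=0$. Since $E\neq(0,0)$, the leading coefficient $4(x_E^2+y_E^2)(x_E^2+y_E^2+1)$ in \eqref{eq.4.2.1} is strictly positive, so $\mathcal{Q}^6_2$ is a genuine quadratic in $p$ and the whole statement becomes a question about its real roots.

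First I would fix the sign of the discriminant. Because $E\notin\Sigma$ forces $x_E^2+y_E^2\neq 1$, the factor $(x_E^2+y_E^2-1)^2$ in \eqref{eq.4.2.3} is strictly positive, so $\mathrm{sign}\bigl(\mathrm{Disc}_p(\mathcal{Q}^6_2)(x_E,y_E)\bigr)=\mathrm{sign}\bigl((x_E^2+y_E^2)^3-y_E^2\bigr)$, which is precisely the sign defining the regions $\Gamma^6_{+}$, $\Gamma^6$, and $\Gamma^6_{-}$. This disposes of part c) at once: on $\Gamma^6_{-}$ the discriminant is negative, so the two roots are complex conjugates and there is no real $6$-Poncelet pair.

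For part a) I would apply the quadratic formula to \eqref{eq.4.2.1}. Writing $\sqrt{\mathrm{Disc}_p}=4\,|x_E^2+y_E^2-1|\sqrt{(x_E^2+y_E^2)^3-y_E^2}$ and absorbing $|x_E^2+y_E^2-1|$ into the factor $(x_E^2+y_E^2-1)$ (which merely relabels $p_{+}\leftrightarrow p_{-}$) yields the two distinct real roots in the form \eqref{eq.4.2.4}. Here one must still check $p_{\pm}\neq 0$, since only $p\in\mathbb{R}^*$ gives a parabola of $\mathcal{F}$: evaluating \eqref{eq.4.2.1} at $p=0$ leaves the constant term $(3x_E^2-y_E^2+1)(x_E^2+y_E^2-1)^2$, which vanishes exactly on $\Lambda^6$; this is the role of the set $\Lambda^6$, and a root degenerates to $p=0$ precisely along $\Gamma^6_{+}\cap\Lambda^6$. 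For part b), on $\Gamma^6$ the discriminant vanishes, so the double root is obtained by dropping the radical in \eqref{eq.4.2.4}. On the branch $y_E\in(0,1)$, where $\Gamma^6$ carries points with $x_E\neq 0$, I would substitute the parametrization $x_E^2+y_E^2=y_E^{2/3}$ and $x_E=\pm y_E^{1/3}\sqrt{1-y_E^{4/3}}$ into this common value, cancel the fractional powers of $y_E$, and arrive at \eqref{eq.4.2.5}; since $2y_E^{2/3}+1>0$, $y_E^{2/3}-1<0$, and the remaining factors are positive, the signs combine to give $\mathrm{sign}(p)=\mathrm{sign}(x_E)$.

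The main obstacle is the algebraic bookkeeping in part b): the cube-root parametrization of $\Gamma^6$ forces fractional powers of $y_E$ that must cancel cleanly to reproduce the closed form \eqref{eq.4.2.5}, and the sign of each factor has to be tracked carefully to conclude $\mathrm{sign}(p)=\mathrm{sign}(x_E)$. A secondary point is the $p=0$ exclusion on $\Lambda^6$, which should be acknowledged so that the ``two pairs'' count of part a) is read correctly away from the locus $\Gamma^6_{+}\cap\Lambda^6$.
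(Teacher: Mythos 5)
Your proposal is correct and follows essentially the same route as the paper: both reduce the statement to the sign of $\mathrm{Disc}_p(\mathcal{Q}^6_2)$, which for $E\notin\Sigma$ equals the sign of $(x_E^2+y_E^2)^3-y_E^2$, and then read off a), b), c) from the quadratic formula. Your additional remarks --- the $p=0$ degeneration along $\Lambda^6$ and the cube-root parametrization of $\Gamma^6$ leading to \eqref{eq.4.2.5} --- are exactly the points the paper handles in its list of sign observations following the main argument.
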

\begin{proof} 
Since $\mathrm{Disc}_p(\mathcal{Q}^6_2)(x_E,y_E)>0$ if $E\in \Gamma^6_{+}\setminus S^1$, and $\mathrm{Disc}_p(\mathcal{Q}^6_2)(x_E,y_E)<0$ if $E\in \Gamma^6_{-}$, these prove a) and c), respectively.

Similarly, $\mathrm{Disc}_p(\mathcal{Q}^6_2)(x_E,y_E)=0$ if $E\in \Gamma^6$. This proves b).
Furthermore,
\begin{itemize}
    \item for every $E \in S^1_{+}\cap \Lambda^6_{+}$
    \begin{equation*}
        \mathrm{sign}(p_{\pm}(E))=-\mathrm{sign}(x_E);
    \end{equation*}
       \item for every $E \in S^1_{+}\cap \Lambda^6_{-}$
    \begin{eqnarray*}
        p_{+}(E)>0>p_{-}(E) \quad \textrm{if} \quad x_E \neq 0\\
        p_{-}(E)=-p_{+}(E) \quad \textrm{if} \quad x_E = 0
    \end{eqnarray*}
   \item for every $E \in S^1_{+}\cap \Lambda^6$
    \begin{eqnarray*}
        p_{+}(E)>0,~p_{-}(E)=0 \quad \textrm{if} \quad x_E < 0\\
        p_{+}(E)=0,~p_{-}(E)<0 \quad \textrm{if} \quad x_E > 0
    \end{eqnarray*}
   \item for every $E \in \Gamma^6_{+}\cap \Lambda^6_{+}$,
    \begin{eqnarray*}
        \mathrm{sign}(p_{\pm}(E))=\mathrm{sign}(x_E);
    \end{eqnarray*}
    \item for every $E\in \Gamma^6\setminus\Sigma$; 
    \begin{equation*}
        p(E)=p_{+}(E)=p_{-}(E),\quad \mathrm{sign}(p_{\pm}(E))=\mathrm{sign}(x_E).
    \end{equation*}
\end{itemize}
See Figure \ref{fig.9}.
\end{proof}

\subsection{Heptagons. Are there confocal families of parabolas $n$-isoperiodic with a circle for $n\ne 3, 4$?}\label{sec.4.3}

Cayley's conditions for $n=7$ leads to quartic polynomials in $p$. We will first review the nature of zeros of a quartic polynomial.
\subsubsection{Nature of zeros of a general quartic polynomial based on its discriminant and associated quantities}\label{app.B}
\noindent
For the general quartic polynomial $f(x)\in \mathbb{R}[x]$:
\begin{equation}\label{eq.4.3.1}
    f(x)=A x^4+B x^3+C x^2+Dx+E,
\end{equation}
the discriminant is given by (see e.g., \cite{Irving2013}):
\begin{eqnarray*}
    \mathrm{Disc}_x(f)&=&B^2 C^2 D^2 - 4 A C^3 D^2 - 4 B^3 D^3 + 18 A B C D^3 - 27 A^2 D^4 - 
 4 B^2 C^3 E + 16 A C^4 E\\
 &&+ 18 B^3 C D E - 80 A B C^2 D E - 
 6 A B^2 D^2 E + 144 A^2 C D^2 E - 27 B^4 E^2\\
 &&+ 144 A B^2 C E^2 - 
 128 A^2 C^2 E^2 - 192 A^2 B D E^2 + 256 A^3 E^3.
\end{eqnarray*}
Define
\begin{eqnarray}
P_f&:=&8A C -3B^2 \label{eq.4.3.2},\\
D_f&:=&-3 B^4 - 16A^2 C^2 + 64A^3 E + 16A B^2 C - 16A^2 B D \label{eq.4.3.3},\\
R_f&:=& B^3 + 8A^2 D - 4A B C \label{eq.4.3.4}\\
O_f&:=& C^2 + 12A E - 3B D \label{eq.4.3.5}.
\end{eqnarray}
Then the complete classification of the nature of zeros of a general quartic in eq. \eqref{eq.4.3.1} is given by the following theorem of Rees \cite{Rees1922} (also see \cite{Dragovic-Murad2025b}):
\begin{theorem}[Rees 1922 \cite{Rees1922}]\label{thm.4.3}
Let $f(x)\in \mathbb{R}[x]$ be the general quartic polynomial in eq. \eqref{eq.4.3.1}.
\begin{itemize}
\item[a)] If $\mathrm{Disc}_x(f) < 0$, then  $f$ has two real simple and two complex conjugate zeros.
\item[b)] If $\mathrm{Disc}_x(f) > 0$, then either $f$ has four real simple or two pairs of complex conjugate zeros.
\begin{itemize}
\item[b.1)] If $P_f < 0 \wedge D_f<0$, then all four zeros are real and simple.
\item[b.2)] If $P_f \geq 0 \vee \left(P_f < 0 \wedge D_f > 0\right)$, then there are two pairs of complex conjugate zeros.
\end{itemize}
\item[c)] If $\mathrm{Disc}_x(f) = 0$, then (and only then) $f$ has a multiple zero. The following different cases can occur:
\begin{itemize}
\item[c.1)] If $P_f < 0 \wedge D_f < 0 \wedge O_f > 0$, then there are a real double and two real simple zeros.
\item[c.2)] If $\left(P_f \leq  0 \wedge D_f>0\right) \vee \left(P_f>0 \wedge O_f>0 \wedge (D_f<0 \vee R_f \neq 0)\right)$, then there are a real double zero and two complex conjugate zeros.
\item[c.3)] If $P_f<0 \wedge O_f = 0$, then there are a real triple zero and a real simple zero.
\item[c.4)] If $D_f = 0$, then:
\begin{itemize}
\item[c.4.1)] If $P_f < 0$, then there are two real double zeros.
\item[c.4.2)] If $P_f > 0 \wedge R_f = 0$, then there is a pair of double complex conjugate zeros.
\item[c.4.3)] If $O_f = 0$, then there is a real zero, equal to $-B/4$, with multiplicity four.
\end{itemize}
\end{itemize}
\end{itemize}
\end{theorem}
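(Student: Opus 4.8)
The plan is to classify the roots by reducing to a \emph{depressed} quartic and then reading the reality and multiplicity pattern off a short list of \emph{seminvariants}, i.e.\ quantities invariant under translation of the variable; the quantities $\mathrm{Disc}_x(f)$, $P_f$, $D_f$, $R_f$, $O_f$ are precisely such seminvariants. First I would substitute $x=t-\tfrac{B}{4A}$ to kill the cubic term and, dividing by $A$, obtain the monic depressed quartic
\[
g(t)=t^{4}+p\,t^{2}+q\,t+r .
\]
Since the differences of the roots are unchanged by translation, each seminvariant reduces, up to a positive power of $A$, to a polynomial in $p,q,r$ alone; for the monic depressed form one computes directly $P_f=8p$, $D_f=16(4r-p^{2})$, $O_f=p^{2}+12r$, $R_f=8q$ (so that $R_f=0\Leftrightarrow q=0$), while in general $\mathrm{Disc}_x(f)=A^{6}\,\mathrm{Disc}_t(g)$. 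This lets me restate every sign hypothesis purely in terms of $p,q,r$, the form in which the root structure is transparent.

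The coarse trichotomy separating cases (a), (b) and (c) follows from the sign of the discriminant alone. Writing $\mathrm{Disc}_t(g)=\prod_{i<j}(t_i-t_j)^{2}$ and accounting for the sign contributed by each complex-conjugate difference, one checks that four real roots and two conjugate pairs both force $\mathrm{Disc}>0$, that two real roots together with one conjugate pair force $\mathrm{Disc}<0$, and that $\mathrm{Disc}=0$ holds exactly when a root is repeated. This yields case (a) outright and reduces the remaining work to refining the strata $\mathrm{Disc}>0$ and $\mathrm{Disc}=0$.

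For the non-degenerate refinement ($\mathrm{Disc}>0$, case (b)) I would factor $g$ over $\mathbb{R}$ as a product of monic real quadratics $(t^{2}+\alpha t+\beta)(t^{2}-\alpha t+\gamma)$, where $\alpha^{2}$ is a root of the resolvent cubic whose discriminant is (a positive multiple of) $O_f$. On this stratum the two factor-discriminants $\alpha^{2}-4\beta$ and $\alpha^{2}-4\gamma$ have the same sign, so all four roots are real when both are nonnegative and all four are complex when both are negative; the mixed-sign possibility is exactly what $\mathrm{Disc}<0$ detects and so cannot occur here. Translating ``both factor-discriminants nonnegative'' into $p,q,r$ recovers the test $P_f<0\wedge D_f<0$ of case (b.1), its complement giving case (b.2).

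The degenerate stratum $\mathrm{Disc}=0$ of case (c) is where the real difficulty lies, because the discriminant vanishes on several geometrically distinct configurations---a single real double root, two real double roots, a complex double pair, a real triple root, or a quadruple root---which must be separated by the \emph{subdiscriminant hierarchy}. The structural fact I would invoke is that the number of \emph{distinct} real roots equals the signature of the Hermite (Hankel) form built from the power sums of the roots, whose leading principal minors are, up to positive constants, $1$, $P_f$, $O_f$ and $\mathrm{Disc}_x(f)$; on $\{\mathrm{Disc}=0\}$ the rank and signature are then governed by the first of these that does not vanish. Matching this with the stated hypotheses---$O_f$ detecting a triple root in (c.3), $R_f=0$ flagging the biquadratic configuration responsible for the complex double pair in (c.4.2), and $D_f=0$ forcing the two-double-root or quadruple cases in (c.4)---produces the subcases (c.1)--(c.4.3). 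The main obstacle is precisely this exhaustive bookkeeping on the discriminant locus: one must show that the listed sign regions are mutually exclusive and jointly exhaustive, and the systematic tool to certify each such equivalence is Sturm's theorem, or equivalently the Sylvester--Habicht subresultant sequence specialized to the quartic.
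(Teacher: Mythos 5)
The paper does not actually prove Theorem \ref{thm.4.3}: it is quoted from Rees \cite{Rees1922} (with a pointer to \cite{Dragovic-Murad2025b}), so there is no in-paper argument to compare against and I can only assess your proposal on its own terms. Your reduction to the monic depressed quartic $g(t)=t^{4}+pt^{2}+qt+r$ is sound: the identifications $P_f=8A^{2}p$, $D_f=16A^{4}(4r-p^{2})$, $O_f=A^{2}(p^{2}+12r)$, $R_f=8A^{3}q$ and $\mathrm{Disc}_x(f)=A^{6}\,\mathrm{Disc}_t(g)$ do let you read every sign hypothesis off $(p,q,r)$, and your sign analysis of $\prod_{i<j}(t_i-t_j)^{2}$ correctly yields the trichotomy (a)/(b)/(c). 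Part (b) via the real quadratic factorization is the standard route and works, although ``translating \dots recovers the test $P_f<0\wedge D_f<0$'' is asserted rather than derived.

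The genuine gap is in part (c), which you yourself identify as the main content. The structural fact you invoke --- that the leading principal minors of the Hermite--Hankel form of power sums are, up to positive constants, $1$, $P_f$, $O_f$, $\mathrm{Disc}_x(f)$ --- is false. For the monic depressed quartic the minors are $4$, $-8p$ (already off by a sign from $P_f=8p$), and $-4(2p^{3}-8pr+9q^{2})$; the third is not proportional to $O_f=p^{2}+12r$, which is the classical invariant $I$ of the quartic and not a subdiscriminant (for $t^{4}-1$ the third minor vanishes while $O_f=-12$). The parenthetical claim that the resolvent cubic's discriminant is a positive multiple of $O_f$ is likewise wrong: the resolvent cubic has the same discriminant as the quartic itself. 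Since the entire derivation of (c.1)--(c.4.3) is claimed to follow by ``matching'' the stated hypotheses against this (incorrect) hierarchy, none of those subcases is actually established: the exhaustive, mutually exclusive case analysis on the locus $\{\mathrm{Disc}=0\}$ --- which you correctly flag as the obstacle --- is deferred to Sturm/Sylvester--Habicht but never carried out, and the specific bridge you propose to it does not exist as stated. To repair this you would need either the correct subdiscriminant chain ($4$, $p$, $2p^{3}-8pr+9q^{2}$, $\mathrm{Disc}$, with Frobenius' rule for vanishing middle minors) together with a verification that the resulting sign conditions are equivalent to the ones in the theorem, or a direct enumeration of the root patterns $(2,1,1)$, $(2,2)$, $(3,1)$, $(4)$, and complex double pair on $\{\mathrm{Disc}=0\}$ in terms of $p,q,r$.
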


\subsubsection{Cayley's Condition for $n=7$}
From Theorem \ref{thm.2.1}, we have:
\begin{proposition}\label{prop.4.5} A circle $\mathcal{D}(E)$ and a parabola $\mathcal{P}(p)\in \mathcal{F}$ form a $7$-Poncelet pair $(\mathcal{D}(E),\mathcal{P}(p))$ if and only if $E\in \mathbb{R}^2\setminus \Sigma$ and
\begin{eqnarray}\label{eq.4.3.6}
\mathcal{Q}^7_4(x_E,y_E,p)&:=&16(x_E^2 + y_E^2)^3 p^4 +48x_E (x_E^2 + y_E^2)^2 (x_E^2 + y_E^2 - 1) p^3 \nonumber\\
&& +4(x_E^2 + y_E^2) (x_E^2 + y_E^2 - 1)^2 (13x_E^2 + y_E^2 - 1) p^2\nonumber\\
&&+4x_E (x_E^2 + y_E^2 - 1)^3 (5x_E^2 + y_E^2 - 1) p - (x_E^2 + y_E^2 - 1)^6=0.
\end{eqnarray}
\end{proposition}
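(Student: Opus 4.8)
The plan is to follow the template established for the prime case $n=5$ in Proposition~\ref{prop.4.1} and the composite case $n=6$ in Proposition~\ref{prop.4.3}. Since $7 = 2m+1$ with $m=3$, Cayley's theorem (Theorem~\ref{thm.2.1}, eq.~\eqref{eq.2.0.1}) states that $(\mathcal{D}(E),\mathcal{P}(p))$ is a $7$-Poncelet pair exactly when the $3\times 3$ determinant
\begin{equation*}
\left|\begin{array}{ccc} A_2 & A_3 & A_4 \\ A_3 & A_4 & A_5 \\ A_4 & A_5 & A_6 \end{array}\right| = 0 ,
\end{equation*}
with $A_k = A_k(x_E,y_E,p)$. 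First I would produce $A_4,A_5,A_6$ from the recursion of Lemma~\ref{lemm.2.1}, using the reduced form in Corollary~\ref{cor.2.1} for the indices $k\ge 3$ and substituting the explicit coefficients $\Delta_1=-1$, $\Theta_1,\Theta_2$, and $\Delta_2=-p^2$ from eqs.~\eqref{eq.2.1.11}--\eqref{eq.2.1.14}; the entries $A_2,A_3$ are already available from eq.~\eqref{eq.3.1.2} and the proof of Proposition~\ref{prop.3.2}.

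The next step is to expand the determinant and isolate its scalar prefactor. By Lemma~\ref{lemm.6.1} this determinant equals $A_0^{-3}\tilde{\mathcal{Q}}^7(p,x_E,y_E)$, where $A_0=\sqrt{-p^2}$ and the entries of $\tilde{\mathcal{Q}}^7$ are the polynomials $\tfrac{1}{k!}\tilde{A}_k$. Since $A_0^{-3}=A_0/p^4$, I expect the computation to yield
\begin{equation*}
\left|\begin{array}{ccc} A_2 & A_3 & A_4 \\ A_3 & A_4 & A_5 \\ A_4 & A_5 & A_6 \end{array}\right| = \frac{c\,\sqrt{-p^2}}{p^{\,j}}\,\mathcal{Q}^7_4(x_E,y_E,p)
\end{equation*}
for an explicit nonzero constant $c$ and exponent $j$, with $\mathcal{Q}^7_4$ the quartic of eq.~\eqref{eq.4.3.6}. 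The odd power of $A_0$ in the prefactor reflects the odd size of the determinant and mirrors the single-entry formulas in eq.~\eqref{eq.3.1.2} and Proposition~\ref{prop.3.2}; crucially it is nonzero for every $p\in\mathbb{R}^*$, so the Cayley condition reduces to $\mathcal{Q}^7_4(x_E,y_E,p)=0$.

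Because $7$ is prime, the product $\prod_{k\mid 7,\,3\le k<7}\tilde{\mathcal{Q}}^k$ appearing in Proposition~\ref{prop.6.1} is empty; hence $\mathcal{Q}^7=\tilde{\mathcal{Q}}^7$ and, in contrast with the hexagon computation~\eqref{eq.4.2.2} where the triangle factor $\mathcal{Q}^3_0$ split off, no lower-order Poncelet factor can divide the determinant. This is the conceptual reason a single quartic encodes the condition. Finally, the restriction $E\in\mathbb{R}^2\setminus\Sigma$ is not an extra hypothesis but is forced by Corollary~\ref{cor.3.3}, since any $n$-Poncelet pair with $n\neq 3,4$ has its center off $\Sigma=S^1\cup\{(0,0)\}$.

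The main obstacle will be computational rather than conceptual: obtaining $A_5$ and $A_6$ requires two additional passes of the quadratically growing recursion~\eqref{eq.2.1.7}, after which the $3\times 3$ determinant is a bulky rational expression whose numerator must be simplified and factored. The delicate verification is that, once the powers of $A_0$ and $p$ are cleared, the numerator reduces to a nonzero constant multiple of $\mathcal{Q}^7_4$ \emph{with no spurious factor}, so that the degree-four coefficient in $p$ is genuinely $16(x_E^2+y_E^2)^3$ and the lower coefficients match eq.~\eqref{eq.4.3.6}. I would confirm this factorization by a symbolic computation, with the primality of $7$ guaranteeing in advance that the factorization is clean.
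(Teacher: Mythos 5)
Your proposal follows exactly the paper's route: the paper proves Proposition~\ref{prop.4.5} by computing the $3\times 3$ Hankel determinant of $A_2,\dots,A_6$ and exhibiting the identity
\(\det = -\tfrac{\sqrt{-p^2}}{512\,p^{10}}\,\mathcal{Q}^7_4(x_E,y_E,p)\),
whose nonzero prefactor reduces the Cayley condition to $\mathcal{Q}^7_4=0$, with the restriction $E\in\mathbb{R}^2\setminus\Sigma$ coming from Corollary~\ref{cor.3.3} just as you say. Your predicted prefactor form $c\,\sqrt{-p^2}/p^{\,j}$ matches the paper's ($c=-1/512$, $j=10$), so the proposal is correct and essentially identical in approach.
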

\begin{proof}
Follows from the Cayley condition eq. \eqref{eq.2.0.1} for $n=7$ and the following equation:
\begin{eqnarray}\label{eq.4.3.7}
\left|\begin{array}{ccc}
    A_2(x_E,y_E,p) & A_3(x_E,y_E,p) & A_4(x_E,y_E,p)\\
    A_3(x_E,y_E,p) & A_4(x_E,y_E,p) & A_5(x_E,y_E,p) \\
    A_4(x_E,y_E,p) & A_5(x_E,y_E,p) & A_6(x_E,y_E,p)
\end{array}\right| = -\frac{\sqrt{-p^2}}{512p^{10}}\mathcal{Q}^7_4(x_E,y_E,p).
\end{eqnarray}
\end{proof}

\begin{theorem}\label{thm.4.4}
A confocal family of parabolas is not $7$-isoperiodic with a circle.
\end{theorem}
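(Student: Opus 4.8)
The plan is to run exactly the argument used for Theorems~\ref{thm.4.1} and~\ref{thm.4.2}, now applied to the quartic $\mathcal{Q}^7_4$ furnished by Proposition~\ref{prop.4.5}. By Definition~\ref{def.1.2}, $7$-isoperiodicity of $\mathcal F$ with $\mathcal{D}(E)$ would require $(\mathcal{D}(E),\mathcal{P}(p))$ to be a $7$-Poncelet pair for every $p\in\mathbb{R}^*$; by Proposition~\ref{prop.4.5} this is equivalent to demanding that $\mathcal{Q}^7_4(x_E,y_E,p)=0$ hold identically in $p$, i.e. that $\mathcal{Q}^7_4(x_E,y_E,\,\cdot\,)$ be the zero polynomial in $\mathbb{R}[p]$.

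First I would read off the leading coefficient of $\mathcal{Q}^7_4$ as a polynomial in $p$ from eq.~\eqref{eq.4.3.6}, namely $16(x_E^2+y_E^2)^3$. This vanishes if and only if $(x_E,y_E)=(0,0)$, that is $E=F$. But the candidate centers are already constrained: by Corollary~\ref{cor.3.3}, if $(\mathcal{D}(E),\mathcal{P}(p))$ is an $n$-Poncelet pair for some $n\neq 3,4$, then $E\in\mathbb{R}^2\setminus\Sigma$, so in particular $E\neq F$. Hence for every admissible center the leading coefficient $16(x_E^2+y_E^2)^3$ is strictly positive, so $\mathcal{Q}^7_4(x_E,y_E,\,\cdot\,)$ is a genuine degree-four polynomial in $p$ and therefore has at most four zeros.

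Since $\mathbb{R}^*$ is infinite, a nonzero polynomial with at most four roots cannot vanish for every $p\in\mathbb{R}^*$. This contradicts the isoperiodicity requirement and completes the argument. Equivalently, one may phrase it through the bookkeeping of Propositions~\ref{prop.4.2} and~\ref{prop.4.4}: since $\mathcal{Q}^7_4$ is quartic with nonvanishing leading term, at most four parabolas of $\mathcal F$ form $7$-Poncelet pairs with a fixed circle, never the whole family.

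As for obstacles, there are essentially none at this stage: all the genuine work---computing the $3\times 3$ Cayley determinant in eq.~\eqref{eq.4.3.7} and extracting the explicit quartic $\mathcal{Q}^7_4$---has already been absorbed into Proposition~\ref{prop.4.5}. The only point requiring a word of care is the excluded case $E=F$, where the quartic degenerates and the leading-coefficient argument no longer applies; this is instead dispatched by Corollary~\ref{cor.3.2}, which already forbids $7$-Poncelet pairs when $E=F$. Granting Proposition~\ref{prop.4.5}, the theorem is immediate and amounts to inspecting a single coefficient.
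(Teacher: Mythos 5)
Your proposal is correct and follows essentially the same route as the paper: the paper's proof simply observes that for every $E\in\mathbb{R}^2\setminus\Sigma$ the polynomial $\mathcal{Q}^7_4(x_E,y_E,p)$ is a genuine quartic in $p$ and so has at most four zeros, which rules out vanishing for all $p\in\mathbb{R}^*$. Your extra bookkeeping (checking the leading coefficient $16(x_E^2+y_E^2)^3$ explicitly and dispatching $E=F$ via Corollary~\ref{cor.3.2}) is a slightly more careful write-up of the same one-line argument.
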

\begin{proof}
For every $(x_E,y_E)\in \mathbb{R}^2\setminus\Sigma$, the polynomial $\mathcal{Q}^7_4 (x_E,y_E,p)\in \mathbb{R}[p]$ is quartic in $p$, can have at most four zeros. The proof is complete. 
\end{proof}

For every $(x,y)\in \mathbb{R}^2\setminus \Sigma$, the discriminant and the associated quantities in eqs. \eqref{eq.4.3.2}-\eqref{eq.4.3.5} for the quartic polynomial $\mathcal{Q}^7_4(x,y,p)\in \mathbb{R}[p]$ are calculated as:
\begin{eqnarray}
\mathrm{Disc}_p(\mathcal{Q}^7_4)(x,y)&=&-65536(x^2 + y^2)^6 (x^2 + y^2 - 1)^{15} \Psi_1(x,y);\label{eq.4.3.8}\\
P_{\mathcal{Q}^7_4}(x,y)&=&-256 (x^2 + y^2)^4 (x^2 + y^2 - 1)^2 \Psi_2(x,y);\label{eq.4.3.9}\\
D_{\mathcal{Q}^7_4}(x,y)&=&-65536 (x^2 + y^2)^8 (x^2 + y^2 - 1)^4 \Psi_3(x,y);\label{eq.4.3.10}\\
O_{\mathcal{Q}^7_4}(x,y)&=& -16 (x^2 + y^2)^2 (x^2 + y^2 - 1)^5 \Psi_4(x,y) \label{eq.4.3.11}\\
R_{\mathcal{Q}^7_4}(x,y)&=& -4096 x (x^2 + y^2)^6 (x^2 + y^2 - 1)^3\Psi_5(x,y)\label{eq.4.3.12}
\end{eqnarray}
where
\begin{eqnarray*}
\Psi_1(x,y)&=&16\left(x^2+y^2\right)^6 - x^{10} - 71x^8 y^2 + x^8 - 247x^6 y^4 + 43x^6 y^2 - 325x^4 y^6 + 108x^4 y^4\\
&& - 23x^4 y^2 - 188x^2 y^8 + 91x^2 y^6 - 2x^2 y^4+ 3x^2 y^2 - 40y^{10} + 25y^8 + 5y^6 - 5y^4 - y^2,\\
\Psi_2(x,y)&=& x^2 - 2y^2 + 2,\\
\Psi_3(x,y)&=&4(x^2 + y^2)^3 -7(x^2 + y^2)^2 + 2 (x^2 + y^2) + 3 x^4 + 1,\\
\Psi_4(x,y)&=&12(x^2 + y^2)^2 - 13(x^2+y^2) +12y^2 + 1,\\
\Psi_5(x,y)&=&2x^2 + y^2 - 1.
\end{eqnarray*}

Let $\mathcal{R}^j, j=1,2,\ldots, 5$ denote the following sets:
\begin{eqnarray*}
    \mathcal{R}^j&=& \left\{(x,y)\in \mathbb{R}^2 \mid \Psi_j(x,y)=0\right\}.
\end{eqnarray*}

We further observe that
\begin{equation*}
     S^1_{-} \cap \mathcal{R}^1_{+} \neq \emptyset, \quad S^1_{+} \cap \mathcal{R}^1_{-} =\emptyset, \quad S^1_{-} \cap \mathcal{R}^1_{-} =\mathcal{R}^1_{-}, \quad  S^1_{+} \cap \mathcal{R}^1_{+} =S^1_{+}.
\end{equation*}

\begin{lemma} \label{lemm.4.1}
For every $E\in \mathbb{R}^2\setminus \Sigma$,
\begin{itemize}
    \item[a)] $D_{\mathcal{Q}^7_4}(x_E,y_E)<0$;
    \item[b)] if $\mathrm{Disc}_p(\mathcal{Q}^7_4)(x_E,y_E)\geq 0$ then $P_{\mathcal{Q}^7_4}(x_E,y_E)<0$.
\end{itemize}
    
\end{lemma}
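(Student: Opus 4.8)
The plan is to reduce both assertions to elementary sign counts. For $E\in\mathbb{R}^2\setminus\Sigma$ one has $x_E^2+y_E^2>0$ and $x_E^2+y_E^2\ne 1$, so in \eqref{eq.4.3.9} and \eqref{eq.4.3.10} the monomial factors $(x_E^2+y_E^2)^4,(x_E^2+y_E^2)^8$ and the even powers $(x_E^2+y_E^2-1)^2,(x_E^2+y_E^2-1)^4$ are all strictly positive. I write $t:=x_E^2+y_E^2$, so $t>0$ and $t\ne1$, and the signs of $D_{\mathcal{Q}^7_4}$ and $P_{\mathcal{Q}^7_4}$ are governed solely by $\Psi_3$ and $\Psi_2$.

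For a), after stripping the positive factors $(x_E^2+y_E^2)^8$ and $(x_E^2+y_E^2-1)^4$ from \eqref{eq.4.3.10}, the sign of $D_{\mathcal{Q}^7_4}$ is the opposite of that of $\Psi_3$, so it suffices to prove $\Psi_3(x_E,y_E)>0$. The decisive step is to separate the radial part and factor it: writing $\Psi_3=\bigl(4t^3-7t^2+2t+1\bigr)+3x_E^4$, one verifies $4t^3-7t^2+2t+1=(t-1)^2(4t+1)$. Since $t\ge0$ forces $4t+1>0$ and $t\ne1$ forces $(t-1)^2>0$, while $3x_E^4\ge0$, we obtain $\Psi_3>0$ and hence $D_{\mathcal{Q}^7_4}<0$. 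The only point needing care is that the non-radial summand $3x_E^4$ cannot spoil positivity, which is immediate because it is nonnegative.

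For b), the same stripping of positive factors in \eqref{eq.4.3.9} gives $\mathrm{sign}\bigl(P_{\mathcal{Q}^7_4}\bigr)=-\mathrm{sign}(\Psi_2)$ with $\Psi_2=x_E^2-2y_E^2+2$, so the target is $\Psi_2>0$ under the hypothesis $\mathrm{Disc}_p(\mathcal{Q}^7_4)\ge0$. In \eqref{eq.4.3.8} the factor $(x_E^2+y_E^2)^6$ is positive, but $(x_E^2+y_E^2-1)^{15}$ changes sign across the unit circle, so I distinguish $t>1$ from $t<1$. If $t>1$, then $(x_E^2+y_E^2-1)^{15}>0$, and by the relation $S^1_{+}\cap\mathcal{R}^1_{+}=S^1_{+}$ recorded above (equivalently, $\Psi_1>0$ on the exterior of $S^1$), formula \eqref{eq.4.3.8} forces $\mathrm{Disc}_p(\mathcal{Q}^7_4)<0$, contradicting the hypothesis. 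Thus $\mathrm{Disc}_p(\mathcal{Q}^7_4)\ge0$ already implies $t<1$, whence $y_E^2\le t<1$ and $\Psi_2=x_E^2+(2-2y_E^2)>0$. This gives $P_{\mathcal{Q}^7_4}<0$, as required.

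The genuine obstacle lies inside b): the argument there rests on the positivity of the degree-twelve polynomial $\Psi_1$ on $S^1_{+}$. I would invoke it through the stated set relation $S^1_{+}\cap\mathcal{R}^1_{+}=S^1_{+}$; were a self-contained proof demanded, I would rewrite $\Psi_1$ in terms of $t$ together with the surviving even combinations, bound the purely $t$-dependent part from below on $\{t>1\}$, and control the remaining angular terms—but this is precisely the computational core that the discussion preceding the lemma has already isolated and asserted. Everything else is bookkeeping made rigorous by the two factorizations above.
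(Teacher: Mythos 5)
Your proof is correct and follows essentially the same route as the paper's: part a) rests on the same factorization $4t^3-7t^2+2t+1=(t-1)^2(4t+1)$ with the nonnegative remainder $3x_E^4$ (the paper phrases this via an auxiliary $f(X,c)=4X^3-7X^2+2X+c$ with $c=3x_E^4+1\ge 1$), and part b) derives $x_E^2+y_E^2<1$ from $\mathrm{Disc}_p(\mathcal{Q}^7_4)\ge 0$ using the sign of $\Psi_1$ outside the unit circle, exactly as the paper does. The reliance on the asserted relation $S^1_{+}\cap\mathcal{R}^1_{+}=S^1_{+}$, which you flag, is present to the same degree in the paper's own argument, so your proof is at the same level of rigor.
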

\begin{proof} To prove a), it is sufficient to show that $\Psi_3(x_E,y_E)>0$ for every $(x_E,y_E) \in \mathbb{R}^2\setminus \Sigma$.

To show this, we consider the following polynomial $f(X,c)\in \mathbb{R}[X]$:
\begin{equation*}
    f(X,c)=4X^3-7X^2+2X+c, \quad c \geq 1.
\end{equation*}
Clearly, for all $X > 0$,
\begin{equation*}
    f(X,1)=(X - 1)^2 (4X + 1) \geq 0.
\end{equation*}
Therefore, $f(X,1)>0$ for all $X>0$ but $X \neq 1$. Also
\begin{equation*}
    f(X,c) = f(X,1) + c-1 > 0.
\end{equation*}
for all $c \geq 1$. Now, to finish the proof, set $X=x_E^2+y_E^2$ and $c=3x_E^4+1$. Note that $c\geq 1$ and $X>0$ but $X\neq 1$ for every $(x_E,y_E)\in \mathbb{R}\setminus \Sigma$. 
Thus, 
\begin{equation*}
 \Psi_3(x_E,y_E)=f(X,c) > 0.  
\end{equation*} 

b) If $\mathrm{Disc}_p(\mathcal{Q}^7_4)(x_E,y_E)\geq 0$ then $0<x_E^2+y_E^2<1$ and $x_E \neq 0$. It then follows:
\begin{equation*}
  \Psi_2(x_E,y_E) = x_E^2-2y_E^2+2>3x_E^2>0.
\end{equation*}
The proof is complete.
 \end{proof}

\begin{corollary}\label{cor.4.2}
For every $E\in \mathbb{R}^2\setminus\Sigma$, the polynomial $\mathcal{Q}^7_4(x_E,y_E,p)\in \mathbb{R}[p]$ has at most one pair of complex conjugate zeros of multiplicity 1.   
\end{corollary}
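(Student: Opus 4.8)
The plan is to read the corollary straight off Rees's classification (Theorem~\ref{thm.4.3}) applied to the quartic $f=\mathcal{Q}^7_4(x_E,y_E,p)\in\mathbb{R}[p]$, using the two inequalities established in Lemma~\ref{lemm.4.1}: that $D_{\mathcal{Q}^7_4}(x_E,y_E)<0$ for every $E\in\mathbb{R}^2\setminus\Sigma$, and that $\mathrm{Disc}_p(\mathcal{Q}^7_4)(x_E,y_E)\geq 0$ forces $P_{\mathcal{Q}^7_4}(x_E,y_E)<0$. The strategy is to identify precisely which branches of Theorem~\ref{thm.4.3} produce complex zeros, and then to eliminate every such branch that would violate the conclusion.

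Inspecting the classification, a real quartic acquires nonreal zeros only in the branches a), b.2), c.2), and c.4.2): branch a) yields two real simple zeros together with a single simple complex conjugate pair; branch b.2) yields two distinct simple complex conjugate pairs; branch c.2) yields a real double zero and a single simple complex conjugate pair; and branch c.4.2) yields one complex conjugate pair of multiplicity two. All other admissible branches (b.1, c.1, c.3, c.4.1, c.4.3) produce only real zeros. Thus the corollary is equivalent to the assertion that branches b.2) and c.4.2) cannot occur for $E\in\mathbb{R}^2\setminus\Sigma$.

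Now I would eliminate the offending branches one by one using Lemma~\ref{lemm.4.1}. Branch b.2) requires $\mathrm{Disc}_p(\mathcal{Q}^7_4)>0$ together with $P_f\geq 0$ or $(P_f<0\wedge D_f>0)$; but $\mathrm{Disc}_p(\mathcal{Q}^7_4)>0$ implies $\mathrm{Disc}_p(\mathcal{Q}^7_4)\geq 0$, so $P_f<0$ by Lemma~\ref{lemm.4.1}b), contradicting $P_f\geq 0$, while $D_f<0$ contradicts $D_f>0$; hence b.2) is impossible. Branch c.4.2) lies under the hypothesis $D_f=0$ of case c.4), which is incompatible with the strict inequality $D_f<0$ from Lemma~\ref{lemm.4.1}a), so c.4.2) is impossible as well. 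For completeness one checks that branch c.2) is also excluded---its hypotheses force $D_f>0$ or $P_f>0$, both contradicted by Lemma~\ref{lemm.4.1} once $\mathrm{Disc}_p(\mathcal{Q}^7_4)=0$---although branch c.2) would in any case yield only a single simple complex pair.

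Combining these exclusions, the only branch that can produce nonreal zeros is a), which contributes exactly one simple complex conjugate pair. Therefore $\mathcal{Q}^7_4(x_E,y_E,p)$ has at most one pair of complex conjugate zeros, and any such pair is necessarily of multiplicity one, as claimed. The proof is essentially bookkeeping; the only step requiring care is the faithful translation of each geometric configuration (two nonreal pairs, a repeated nonreal pair) into its exact logical branch in Theorem~\ref{thm.4.3}, after which the inequalities $D_f<0$ and $\mathrm{Disc}_p(\mathcal{Q}^7_4)\geq 0\Rightarrow P_f<0$ do all the work.
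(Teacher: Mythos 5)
Your proof is correct and follows essentially the same route as the paper: both arguments invoke Lemma \ref{lemm.4.1} together with Rees's classification to rule out branch b.2) (two distinct complex conjugate pairs, excluded because $D_f<0$ and $\mathrm{Disc}_p\ge 0\Rightarrow P_f<0$) and branch c.4.2) (a doubled complex pair, excluded because it requires $D_f=0$). Your enumeration of all nonreal-zero branches, including the harmless c.2), is slightly more systematic than the paper's but not a different argument.
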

\begin{proof} 
Since $D_{\mathcal{Q}^7_4}(x_E,y_E) < 0$ for every $(x_E,y_E)\in \mathbb{R}^2\setminus \Sigma$, by Theorem \ref{thm.4.3} the polynomial $\mathcal{Q}^7_4(x_E,y_E,p)$ has two distinct pairs of complex conjugate zeros if and only if 
\begin{equation*}
    \mathrm{Disc}_p(\mathcal{Q}^7_4)(x_E,y_E)>0 \wedge P_{\mathcal{Q}^7_4}(x_E,y_E)>0.
\end{equation*}
Now, if $(x_E,y_E)\in \Sigma$, then $\mathrm{Disc}_p(\mathcal{Q}^7_4)(x_E,y_E)=0$. And for every $(x_E,y_E)\in \mathbb{R}^2\setminus\Sigma$, by Lemma \ref{lemm.4.1} b),
\begin{equation*}
    \mathrm{Disc}_p(\mathcal{Q}^7_4)(x_E,y_E)>0 \Rightarrow P_{\mathcal{Q}^7_4}(x_E,y_E)<0.
\end{equation*}

Also since $D_{\mathcal{Q}^7_4}(x_E,y_E)<0$, $Q^7_4(x_E,y_E,p)$ does not have a pair of double complex conjugate zeros.
This concludes the proof.
\end{proof}

\begin{proposition}\label{prop.4.6}
Given a confocal family of parabolas $\mathcal F$, there exist 
\begin{itemize}
    \item [a)] two 7-Poncelet pairs corresponding to two distinct real zeros of $\mathcal{Q}^7_4(x_E,y_E,p)\in \mathbb{R}[p]$ if $E\in \mathcal{R}^1_{-} \cup S^1_{+}$;
    \item [b)] three 7-Poncelet pairs corresponding to one real zero of multiplicity 2 and two real zeros  of multiplicity 1 if $E\in \mathcal{R}^1$;
    \item [c)] four 7-Poncelet pairs corresponding to the four distinct real zeros if $E\in \mathcal{R}^1_{+} \cap S^1_{-}$.
\end{itemize} 
\end{proposition}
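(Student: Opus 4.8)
The plan is to translate the enumeration of 7-Poncelet pairs into the enumeration of real zeros of the quartic $\mathcal{Q}^7_4(x_E,y_E,p)\in\mathbb{R}[p]$. By Proposition \ref{prop.4.5}, for $E\in\mathbb{R}^2\setminus\Sigma$ the pair $(\mathcal{D}(E),\mathcal{P}(p))$ is a 7-Poncelet pair exactly when $p$ is a zero of $\mathcal{Q}^7_4$; moreover its constant term $-(x_E^2+y_E^2-1)^6$ is nonzero on $\mathbb{R}^2\setminus\Sigma$, so every real zero is nonzero, hence yields a genuine parabola, and distinct zeros yield distinct pairs. Thus it remains to determine, in each of the three regions, how many of the four zeros are real and how their multiplicities are distributed. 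The instrument is Rees' classification (Theorem \ref{thm.4.3}), fed by the factorizations \eqref{eq.4.3.8}--\eqref{eq.4.3.11} of $\mathrm{Disc}_p(\mathcal{Q}^7_4)$, $P_{\mathcal{Q}^7_4}$, $D_{\mathcal{Q}^7_4}$, $O_{\mathcal{Q}^7_4}$, together with Lemma \ref{lemm.4.1} and Corollary \ref{cor.4.2}.

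First I read off the sign of $\mathrm{Disc}_p(\mathcal{Q}^7_4)$ from \eqref{eq.4.3.8}. Since $(x^2+y^2)^6>0$ off the origin and $(x^2+y^2-1)^{15}$ carries the sign of $x^2+y^2-1$, the sign of the discriminant is $-\mathrm{sign}(x^2+y^2-1)\cdot\mathrm{sign}(\Psi_1)$. On $S^1_+$ the relation $S^1_+\cap\mathcal{R}^1_+=S^1_+$ gives $\Psi_1>0$, whence $\mathrm{Disc}_p<0$; on $\mathcal{R}^1_-$ the relation $S^1_-\cap\mathcal{R}^1_-=\mathcal{R}^1_-$ gives $x_E^2+y_E^2<1$ while $\Psi_1<0$, so again $\mathrm{Disc}_p<0$. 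Hence throughout region a) the discriminant is negative and Rees' case a) yields two real simple and two complex conjugate zeros, i.e. two 7-Poncelet pairs. For region c), $E\in\mathcal{R}^1_+\cap S^1_-$ gives $\Psi_1>0$ and $x_E^2+y_E^2<1$, so $\mathrm{Disc}_p>0$; Lemma \ref{lemm.4.1} supplies $D_{\mathcal{Q}^7_4}<0$ and, the discriminant being nonnegative, $P_{\mathcal{Q}^7_4}<0$, so Rees' case b.1) applies, all four zeros are real and simple, and there are four 7-Poncelet pairs.

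The delicate case is b), where $E\in\mathcal{R}^1$, i.e. $\Psi_1=0$, so $\mathrm{Disc}_p=0$ and $\mathcal{Q}^7_4$ has a multiple zero. Lemma \ref{lemm.4.1} again yields $D_{\mathcal{Q}^7_4}<0$ and $P_{\mathcal{Q}^7_4}<0$; within Rees' case c) these signs exclude c.2) and c.4), leaving exactly c.1) (a real double and two real simple zeros, when $O_{\mathcal{Q}^7_4}>0$) and c.3) (a real triple and a real simple zero, when $O_{\mathcal{Q}^7_4}=0$). Because these sign constraints already force $O_{\mathcal{Q}^7_4}\geq0$, it suffices to prove $O_{\mathcal{Q}^7_4}\neq0$ on $\mathcal{R}^1\setminus\Sigma$ in order to land in c.1). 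I first note that $S^1_+\cap\mathcal{R}^1_+=S^1_+$ forces $S^1_+\cap\mathcal{R}^1=\emptyset$, so every point of $\mathcal{R}^1\setminus\Sigma$ lies in $S^1_-$; then \eqref{eq.4.3.11} shows that on this set the sign of $O_{\mathcal{Q}^7_4}$ equals the sign of $\Psi_4$, since $(x^2+y^2-1)^5<0$ there.

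The whole matter therefore reduces to the single non-vanishing statement $\Psi_4\neq0$ (indeed $\Psi_4>0$) along the curve $\Psi_1=0$ inside the punctured disk $0<x_E^2+y_E^2<1$, equivalently $\mathcal{R}^1\cap\mathcal{R}^4\cap S^1_-=\emptyset$. This is the main obstacle, and I would settle it by elimination: compute and factor the resultant $\mathrm{Res}_y(\Psi_1,\Psi_4)$ (or $\mathrm{Res}_x$), then verify that none of its real roots extends to a common solution of $\Psi_1=\Psi_4=0$ with $0<x^2+y^2<1$, the surviving common zeros all lying on $S^1$ or at the origin, hence in $\Sigma$. Once $\Psi_4>0$ is established on $\mathcal{R}^1\setminus\Sigma$, case c.1) gives one double and two simple real zeros, i.e. three distinct nonzero values of $p$, and thus three 7-Poncelet pairs, completing b). I expect the discriminant sign analysis and the Rees bookkeeping to be routine; the genuine difficulty is this last semialgebraic non-vanishing claim, which rests on controlling the resultant of the degree-twelve polynomial $\Psi_1$ against $\Psi_4$.
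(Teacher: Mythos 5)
Your proposal follows essentially the same route as the paper: Rees' classification (Theorem \ref{thm.4.3}) fed by the factorizations \eqref{eq.4.3.8}--\eqref{eq.4.3.11}, with Lemma \ref{lemm.4.1} supplying $D_{\mathcal{Q}^7_4}<0$ everywhere on $\mathbb{R}^2\setminus\Sigma$ and $P_{\mathcal{Q}^7_4}<0$ wherever the discriminant is nonnegative, and your sign bookkeeping in the three regions matches the paper's. The one step you leave as a proposed (but unexecuted) resultant computation --- that $O_{\mathcal{Q}^7_4}$, equivalently $\Psi_4$, does not vanish on $\mathcal{R}^1\setminus\Sigma$, which is what selects Rees' case c.1) over c.3) in part b) --- is precisely the assertion the paper itself states without proof, so your reduction is faithful and, if anything, makes the required verification more explicit.
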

\begin{proof}
Take $(x_E,y_E)\in \mathbb{R}^2\setminus \Sigma$. Since $D_{\mathcal{Q}^7_4}(x_E,y_E)<0$ by Lemma \ref{lemm.4.1} a), it follows from the Theorem \ref{thm.4.3} that 
\begin{itemize}
    \item[a)]  $\mathrm{Disc}_p(\mathcal{Q}^7_4)(x_E,y_E)<0$ if $(x_E,y_E)\in \mathcal{R}^1_{-} \cup S^1_{+}$.
    \item[b)] $\mathrm{Disc}_p(\mathcal{Q}^7_4)(x_E,y_E)=0$ if $(x_E,y_E)\in \mathcal{R}^1$. But there exists no $(x_E,y_E)\in \mathbb{R}^2$ such that
    \begin{equation*}
        \mathrm{Disc}_p(\mathcal{Q}^7_4)(x_E,y_E)=0 \wedge P_{\mathcal{Q}^7_4}(x_E,y_E)=0 \quad \text{or} \quad \mathrm{Disc}_p(\mathcal{Q}^7_4)(x_E,y_E)=0 \wedge O_{\mathcal{Q}^7_4}(x_E,y_E)=0.
     \end{equation*}

So, there exists no real zero with multiplicity 2 and two complex conjugate zeros, or no real zeros with multiplicities 1 and 3;   
    \item[c)] Since $D_{\mathcal{Q}^7_4}(x_E,y_E)<0$ by Lemma \ref{lemm.4.1}, for every $E\in \mathcal{R}^1_{+} \cup S^1_{-}$ the following condition 
    \begin{equation*}
        \mathrm{Disc}_p(\mathcal{Q}^7_4)(x_E,y_E)>0 \wedge P_{\mathcal{Q}^7_4}(x_E,y_E) < 0. 
    \end{equation*}
    is satisfied.
\end{itemize}
\end{proof}

\begin{corollary}\label{cor.4.3}

The confocal family of parabolas $\mathcal{F}$ is not $n$-isoperiodic with the circle $\mathcal{D}(E)$ for $n \neq 7$ if $E\in \mathbb{R}^2\setminus \Sigma$.
\end{corollary}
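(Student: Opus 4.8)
The plan is to convert the existence statement of Proposition~\ref{prop.4.6} into an obstruction. The feature that singles out $n=7$ is that, unlike $n=5$ and $n=6$ (where the regions $\Gamma^5_-$ and $\Gamma^6_-$ contain no real Poncelet pair), Proposition~\ref{prop.4.6} guarantees that for \emph{every} $E\in\mathbb{R}^2\setminus\Sigma$ at least one parabola of $\mathcal{F}$ forms a $7$-Poncelet pair with $\mathcal{D}(E)$. I would use this guaranteed seven-Poncelet pair as a witness that forbids isoperiodicity at every other period.

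Concretely, fix $E\in\mathbb{R}^2\setminus\Sigma$. By Proposition~\ref{prop.4.6} the quartic $\mathcal{Q}^7_4(x_E,y_E,p)\in\mathbb{R}[p]$ has at least two real roots, so I may pick a real root $p_0$. Evaluating \eqref{eq.4.3.6} at $p=0$ gives $\mathcal{Q}^7_4(x_E,y_E,0)=-(x_E^2+y_E^2-1)^6$, which is nonzero because $x_E^2+y_E^2\neq 1$ for $E\notin\Sigma$; hence $p_0\neq 0$, so $\mathcal{P}(p_0)\in\mathcal{F}$ and $(\mathcal{D}(E),\mathcal{P}(p_0))$ is a genuine $7$-Poncelet pair. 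I would then note that such a pair is an $m$-Poncelet pair for no $m\neq 7$: since $7$ is prime, the only divisor $k$ with $3\le k<7$ is absent, so the normalizing product in Proposition~\ref{prop.6.1} is empty and $\mathcal{Q}^7=\tilde{\mathcal{Q}}^7$; thus closure of the inscribed-circumscribed polygon occurs after exactly $7$ steps, the Poncelet period equals $7$, closure then recurs only at multiples of $7$, and by the convention fixed in the Remark after Theorem~\ref{thm.2.1} a multiply-traversed heptagon is not counted as a polygon of larger period.

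Finally I would argue by contradiction. If $\mathcal{F}$ were $n$-isoperiodic with $\mathcal{D}(E)$ for some $n\neq 7$, then by Definition~\ref{def.1.2} the pair $(\mathcal{D}(E),\mathcal{P}(p))$ would be an $n$-Poncelet pair for every $p\in\mathbb{R}^*$, in particular for $p=p_0$, contradicting that $(\mathcal{D}(E),\mathcal{P}(p_0))$ is a $7$-Poncelet pair and nothing else. The delicate step is the middle one: I must make sure that being a $7$-Poncelet pair pins the period down to exactly $7$, rather than merely asserting that some heptagon closes, and this is precisely where the primality of $7$, the normalization of Proposition~\ref{prop.6.1}, and the Remark's convention do the work (the argument is unaffected by whether the heptagon is real or lies in $\mathbb{CP}^2$, since the Cayley condition is projective); the existence input and the nonvanishing of $p_0$ are routine.
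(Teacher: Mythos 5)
Your argument is correct and is precisely the one the paper leaves implicit: Corollary \ref{cor.4.3} is stated without proof, but the intended reasoning (visible in the proof of Theorem \ref{thm.4.5}, where Corollary \ref{cor.4.2} is invoked to force $n=7$) is exactly your chain — every $E\in\mathbb{R}^2\setminus\Sigma$ admits a real root $p_0\neq 0$ of $\mathcal{Q}^7_4$, hence a genuine $7$-Poncelet pair, which is incompatible with $n$-isoperiodicity for any $n\neq 7$ since a nondegenerate Poncelet pair has a unique period. Your added details (the evaluation $\mathcal{Q}^7_4(x_E,y_E,0)=-(x_E^2+y_E^2-1)^6\neq 0$ and the justification via primality of $7$ and the Remark's convention that the period is well-defined) only make explicit what the paper takes for granted.
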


We observe that Cayley's condition for $n=8$ leads again to a quartic polynomial in $p$. Thus, a similar analysis for cyclic octagons circumscribing parabolas from a confocal family can be obtained along the same lines as in $n=7$ case.

\subsubsection{The Answer.}
Now we answer the question raised at the end of previous section:
\begin{center}
    \textit{Is a confocal family of parabolas $n$-isoperiodic with a circle for $n \neq 3,4$?}
\end{center}

\begin{theorem}\label{thm.4.5}
The confocal family of parabolas $\mathcal{F}$ is $n$-isoperiodic with some circle $\mathcal{D}$ if and only if $n \in \{3,4\}$.
\end{theorem}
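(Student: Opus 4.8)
The plan is to split into the two implications and to lean entirely on the isoperiodicity results already proved for individual $n$ together with the corollaries that constrain the center of $\mathcal{D}$.

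For the implication from $n\in\{3,4\}$ to existence, the circles are already exhibited. When $n=3$, Theorem~\ref{thm.3.2} says $\mathcal{F}$ is $3$-isoperiodic with $\mathcal{D}(E)$ exactly when $F\in\mathcal{D}(E)$, so any $E\in S^1$ provides such a circle. When $n=4$, Theorem~\ref{thm.3.5} says $\mathcal{F}$ is $4$-isoperiodic with $\mathcal{D}(E)$ exactly when $E=F$, so $\mathcal{D}(F)$ works. Hence for each $n\in\{3,4\}$ a suitable circle exists.

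For the converse I would argue by contradiction: assume $\mathcal{F}$ is $n$-isoperiodic with some circle $\mathcal{D}(E)$ but $n\notin\{3,4\}$. The first step locates $E$. By Corollary~\ref{cor.4.1} (equivalently Corollary~\ref{cor.3.4}), $n$-isoperiodicity with $n\neq 3,4$ forces $x_E=0$ and $y_E\in(-1,0)\cup(0,1)$; since $x_E=0$ this means $E$ is neither the origin nor a point of $S^1$, i.e. $E\in\mathbb{R}^2\setminus\Sigma$. The second step eliminates every admissible $n$. For $n\neq 7$, Corollary~\ref{cor.4.3} states precisely that $\mathcal{F}$ cannot be $n$-isoperiodic with $\mathcal{D}(E)$ once $E\in\mathbb{R}^2\setminus\Sigma$, which contradicts the assumption. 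The only value left uncovered is $n=7$, and this is ruled out directly by Theorem~\ref{thm.4.4}, which forbids $7$-isoperiodicity with any circle whatsoever. Combining the two steps, no $n\notin\{3,4\}$ is possible, and together with the first implication this proves the equivalence.

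Once Corollary~\ref{cor.4.3} and Theorem~\ref{thm.4.4} are available, the final deduction is a short assembly; the substance lies in those inputs. The conceptual mechanism I would emphasize is that isoperiodicity requires the Cayley polynomial $\mathcal{Q}^n(x_E,y_E,p)\in\mathbb{R}[p]$ to vanish identically in $p$, whereas in every case computed in Section~\ref{sec.4} its leading coefficient in $p$ is a nonzero constant times a power of $x_E^2+y_E^2$ times a strictly positive factor, hence nonzero for $E\neq(0,0)$; this forces only finitely many admissible $p$ unless $E=F$, where Corollary~\ref{cor.3.2} already excludes all $n\geq 5$. I expect the main obstacle to be exactly this uniform-in-$n$ control of the leading coefficient, whose nontrivial cancellations for large $n$ are what Corollary~\ref{cor.4.3} packages; by contrast, the special role of $n=7$ is a bookkeeping artifact of Corollary~\ref{cor.4.3} and is cleanly patched by Theorem~\ref{thm.4.4}.
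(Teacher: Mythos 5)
Your proof is correct and follows essentially the same route as the paper: the forward direction via Theorems~\ref{thm.3.2} and~\ref{thm.3.5}, and the converse by locating $E\in\mathbb{R}^2\setminus\Sigma$, invoking Corollary~\ref{cor.4.3} to force $n=7$, and then contradicting Theorem~\ref{thm.4.4}. The only immaterial differences are that you localize $E$ via Corollary~\ref{cor.4.1} where the paper uses Corollary~\ref{cor.3.3}, and that you cite Corollary~\ref{cor.4.3} where the paper's text cites Corollary~\ref{cor.4.2} (evidently intending~\ref{cor.4.3}).
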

\begin{proof}
We already proved one (`if') direction in Theorems \ref{thm.3.2} and \ref{thm.3.5}. 

To prove the `only if' direction, we suppose, on the way to the contradiction, that $\mathcal{F}$ is $n$-isoperiodic with $\mathcal{D}(E)$. Then by Corollary \ref{cor.3.3}, $E\in \mathbb{R}^2\setminus \Sigma$. Now it follows from Corollary \ref{cor.4.2} that $n=7$. This, however, contradicts the Theorem \ref{thm.4.4}. The proof is complete.
\end{proof}

Theorem \ref{thm.4.5} can be alternatively proved by using the Corollary \ref{cor.4.1} and Theorem \ref{thm.4.3} as follows.
\begin{proof} If $\mathcal{F}$ is $n$-isoperiodic with some circle $\mathcal{D}(E)$, then by Corollary \ref{cor.4.1}, we have
\begin{equation*}
    x_E=0, \quad y_E \in (-1,0) \cup (0,1).
\end{equation*}
 
 As $\mathcal{Q}^7_4(0,y_E,p)$ is  biquadratic in $p$, its discriminant in $p^2$ is given by
\begin{equation*}
    \mathrm{Disc}_{p^2}(\mathcal{Q}^7_4)(0,y_E)=16y_E^4 (y_E + 1)^6 (y_E - 1)^6 (4y_E^2 + 1) >0, \quad \forall y_E \in \mathbb{R}\setminus\{0,\pm 1\}.
\end{equation*}
If $\alpha, \beta$ are the zeros of the biquadratic, then $\alpha \beta <0$. If, without loss of generality, $\alpha>0$, then the real zeros of $\mathcal{Q}^7_4(0,y_E,p)$ are given by: $p_{\pm}(E)=\pm \sqrt{\alpha}$. Therefore, $n=7$; this, however, contradicts the Theorem \ref{thm.4.3}. The proof is complete.
\end{proof}

\section{An algebro-geometric approach to Painlev\'{e} VI equations}\label{sec.5}
\subsection{Picard solutions and Okamoto transformation}\label{sec.5.1}

The Painlev\'e VI equations form the four-parameter family of remarkable second order ordinary differential equations introduced by Paul Painlev\'e and his school at the beginning of the twentieth century.

The Painlev\'e VI equation reads:
\begin{eqnarray}\label{eq.5.1.1}
	\frac{d^2 y}{dx^2} &=& \frac{1}{2} \left(  \frac{1}{y} + \frac{1}{y-1}+ \frac{1}{y-x}  \right) \left( \frac{dy}{dx} \right)^2 - \left( \frac{1}{x} + \frac{1}{x-1} + \frac{1}{y-x} \right) \frac{dy}{dx}\nonumber\\
	&&+ \frac{y(y-1)(y-x)}{x^2(x-1)^2}\left( \alpha +\beta\frac{x}{y^2} + \gamma\frac{x-1}{(y-1)^2} + \delta\frac{x(x-1)}{(y-x)^2}  \right).
\end{eqnarray}

Here, the parameters are $\alpha, \beta, \gamma, \delta \in \mathbb C$, see for example \cite{Okamoto1987,Hitchin1995,Bolibruch2006,Dragovic-Shramchenko2019} and references therein. We will deal here with two particular equations with parameter values
\begin{equation}\label{eq.5.1.2}
	\alpha=\frac{1}{8}, \qquad \beta=-\frac{1}{8}, \qquad \gamma=\frac{1}{8}, \qquad \delta=\frac{3}{8},
\end{equation}
and
\begin{equation}\label{eq.5.1.3}
	\alpha=0, \qquad \beta=0, \qquad \gamma=0, \qquad \delta=\frac{1}{2}.
\end{equation}

We will use the transformed Weierstrass $\wp$-function with periods $2w_1$ and $2w_2$, defined by the following  equation

\begin{equation}\label{eq.5.1.4}
	\left( {\hat \wp^\prime}(z) \right)^2 = \hat \wp(z)(\hat \wp(z)-1)(\hat \wp(z)-x).
\end{equation}

The explicit solution to the  Painlev\'e VI equation with parameters \eqref{eq.5.1.3} was obtained by Picard \cite{Picard1889}. The formula for the solution is:

\begin{equation}\label{eq.5.1.5}
	y_0 (x) = \hat \wp(2c_1 w_1(x) + 2c_2 w_2(x)).
\end{equation}

The transformation
\begin{equation}\label{eq.5.1.6}
	y(x) = y_0 + \frac{y_0(y_0-1)(y_0-x)}{x(x-1)y_0^\prime - y_0(y_0-1)},
\end{equation}
was written in this form in \cite{Dragovic-Shramchenko2019} and follows from \cite{Okamoto1987}. It relates the Picard solution $y_0(x)$, eq. \eqref{eq.5.1.5}, to the general solution $y(x)$ of the
Painlev\'e VI equation with constants \eqref{eq.5.1.2}.

In order to apply the Okamoto transformation, one needs an expression for $y_0'$. Such an expression is obtained in \cite{Dragovic-Shramchenko2019}.

Consider the elliptic curve $\mathcal{E}$ defined by the equation

\begin{equation}\label{eq.5.1.7}
\mathcal{E}: v^2=u(u-1)(u-x).
\end{equation}
It is parameterized by $(u, v)=(\hat \wp, \hat \wp')$. Its Abel map
\begin{equation*}
	\mathcal{A}(p)=\int_{p_\infty}^p\frac{du}{v},  \quad p\in \mathcal{E},
\end{equation*}
maps $\mathcal{E}$ into the Jacobian $J(\mathcal{E}) = \mathbb{C}/\{n(2w_1)+m(2w_2) \mid n, m\in \mathbb{Z}\}$. Here $p_\infty$ is the point at infinity on the compactified curve $\mathcal{E}$. 

We start with an arbitrary point $z_0=2w_1c_1+2w_2c_2$ in the Jacobian, with some constants $c_1$ and $c_2$. Let the point $q_0$ on $\mathcal{E}$ be  the Jacobi inversion of $z_0$, that is
\begin{equation*}
	\mathcal{A}(q_0)=\int_{p_\infty}^{q_0}\frac{du}{v} = z_0= 2w_1c_1+2w_2c_2.
\end{equation*}
We denote by $q^*$ the elliptic involution of a point $q$ of $\mathcal{E}$. The following differential of the third kind:
\begin{equation}\label{eq.5.1.8}
	\Omega_1:=\Omega_{q_0,q_0^*} - 4 \pi{i}c_2\omega,
\end{equation}
is the main object of study in \cite{Dragovic-Shramchenko2019}. Here $\Omega_{q_0,q_0^*}$ is the normalized differential of the third kind with poles at $q_0$ and $q_0^*$ of residues $1$ and $-1$, respectively, and $\omega$ the normalized holomorphic differential $w=\frac{1}{2w_1}\frac{du}{v}$.

Differential $\Omega_1$ has two zeros paired by the elliptic involution. Denote by $y$ their projection on the $u$-sphere. This projection, as a function of $x$, satisfies a Painlev\'e-VI equation:

\begin{theorem}[Dragovi\'{c}-Shramchenko (2019) \cite{Dragovic-Shramchenko2019}, Theorem 1]\label{thm.5.1} 
If $x$ varies, while $c_1, c_2$ stay fixed, then $y(x)$, the position of zero of $\Omega_1$, is the Okamoto transformation \eqref{eq.5.1.6} of $y_0$ and satisfies Painlev\'e VI with constants \eqref{eq.5.1.2}
\end{theorem}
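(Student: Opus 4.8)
The plan is to reduce the theorem to a single algebraic identity and then invoke the known transformation properties, rather than attacking the Painlev\'e VI equation directly. The statement carries two assertions: that $y(x)$ equals the Okamoto transform \eqref{eq.5.1.6} of the Picard solution $y_0$, and that $y(x)$ solves Painlev\'e VI with constants \eqref{eq.5.1.2}. The second follows formally from the first, because Picard's formula \eqref{eq.5.1.5} makes $y_0$ a solution of Painlev\'e VI with constants \eqref{eq.5.1.3}, and the Okamoto--B\"acklund transformation \eqref{eq.5.1.6} sends such solutions to solutions of Painlev\'e VI with constants \eqref{eq.5.1.2}. Hence the entire substance is to identify the projection $y(x)$ of the zeros of $\Omega_1$ with the right-hand side of \eqref{eq.5.1.6}, and I would organize everything around computing that zero explicitly.

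First I would write $\Omega_1$ in closed form on $\mathcal{E}$ in the uniformizing coordinate $z$, where $u=\hat\wp(z)$, $v=\hat\wp'(z)$, so that $du/v=dz$. The normalized third-kind differential with poles at $q_0\leftrightarrow z_0$ and $q_0^*\leftrightarrow -z_0$ is
\[
\Omega_{q_0,q_0^*}=\Bigl[\zeta(z-z_0)-\zeta(z+z_0)+\tfrac{2\eta_1 z_0}{w_1}\Bigr]dz,
\]
the constant being fixed by the vanishing of the $a$-period, which one computes from $\sigma(z+2w_1)=-e^{2\eta_1(z+w_1)}\sigma(z)$. Subtracting $4\pi i c_2\,\omega=\tfrac{2\pi i c_2}{w_1}dz$ and inserting $z_0=2c_1w_1+2c_2w_2$, the Legendre relation $\eta_1 w_2-\eta_2 w_1=\tfrac{\pi i}{2}$ collapses the constant term into the symmetric combination $4(c_1\eta_1+c_2\eta_2)$, giving
\[
\Omega_1=\Bigl[\zeta(z-z_0)-\zeta(z+z_0)+4(c_1\eta_1+c_2\eta_2)\Bigr]dz.
\]
Applying the addition identity $\dfrac{\hat\wp'(z_0)}{\hat\wp(z)-\hat\wp(z_0)}=\zeta(z-z_0)-\zeta(z+z_0)+2\zeta(z_0)$, the vanishing of $\Omega_1$ is governed by $\hat\wp(z)-\hat\wp(z_0)=\hat\wp'(z_0)/K$ with $K:=2\zeta(z_0)-4(c_1\eta_1+c_2\eta_2)$; both zeros therefore share the single $u$-value $y=y_0+v_0/K$, where $y_0=\hat\wp(z_0)$ and $v_0=\hat\wp'(z_0)$.

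Since $v_0^2=y_0(y_0-1)(y_0-x)$, matching $y=y_0+v_0/K$ against \eqref{eq.5.1.6} reduces to the single identity $v_0 K = x(x-1)y_0'-y_0(y_0-1)$. The right-hand side requires an explicit expression for $y_0'=dy_0/dx$, produced by differentiating $y_0=\hat\wp\bigl(2c_1w_1(x)+2c_2w_2(x);x\bigr)$: the chain rule yields $v_0\,(2c_1w_1'+2c_2w_2')$ together with the modulus derivative $\partial_x\hat\wp$ taken at fixed $z$. I would supply $w_1',w_2'$ and $\partial_x\hat\wp$ from the classical Picard--Fuchs equations for the periods and the heat-type relation for $\hat\wp$, re-expressed through $\hat\wp(z_0),\hat\wp'(z_0),\zeta(z_0),\eta_i,w_i$ and the branch-point data of $\mathcal{E}$ (whose branch points are $0,1,x,\infty$, so $x$ is the cross-ratio). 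Substituting these into $x(x-1)y_0'-y_0(y_0-1)$ and simplifying once more with the Legendre relation should reproduce exactly $v_0 K$, establishing $y=\mathrm{Okamoto}(y_0)$.

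The main obstacle is precisely this last step: the computation of $y_0'$ and the verification of $v_0 K = x(x-1)y_0'-y_0(y_0-1)$. It is a delicate piece of elliptic-function calculus, fusing the $x$-dependence of the half-periods with the modular derivative of the Weierstrass function, and it is where all the normalizations (the $a$-period of $\Omega_{q_0,q_0^*}$, the $-4\pi i c_2$ shift, and the Legendre relation) must conspire to give the rational form in \eqref{eq.5.1.6}. Everything else — the closed form of $\Omega_1$, the location of its zeros, and the passage from the identification $y=\mathrm{Okamoto}(y_0)$ to the Painlev\'e VI property through the results of Okamoto and Picard — is then routine.
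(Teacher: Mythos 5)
This statement is not proved in the paper at all: it is imported verbatim as Theorem 1 of \cite{Dragovic-Shramchenko2019}, and the paper's ``proof'' consists of the citation, so there is no internal argument to compare yours against. That said, your reconstruction follows essentially the same route as the cited reference: writing $\Omega_{q_0,q_0^*}$ in the uniformizing coordinate with the $a$-period normalization, collapsing the constant to $4(c_1\eta_1+c_2\eta_2)$ via the Legendre relation, locating the common $u$-value $y=y_0+v_0/K$ of the two zeros by the addition formula, and reducing the whole theorem to the identity $v_0K=x(x-1)y_0'-y_0(y_0-1)$, after which Picard's theorem and Okamoto's transformation theory give the Painlev\'e VI property for the constants \eqref{eq.5.1.2}. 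The intermediate computations you do carry out (the $a$-period constant, the Legendre simplification, the addition identity) all check. The one place where your write-up stops short of a proof is exactly the step you flag: the formula for $y_0'=dy_0/dx$ obtained from the $x$-dependence of the half-periods and the modular derivative of $\hat\wp$ is asserted to ``reproduce exactly $v_0K$'' rather than derived; in \cite{Dragovic-Shramchenko2019} this derivative formula is itself a separate, nontrivial lemma, and without it the identification $y=\mathrm{Okamoto}(y_0)$ is a plan rather than a proof. For the purposes of this paper, though, citing the reference --- as the authors do --- is the intended resolution.
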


One can observe that the periods of the differential $\Omega_1$ are preserved under the deformation of Theorem \ref{thm.5.1}, since its $a$-period  is $-4\pi i c_2$ and its $b$-period is $4\pi i c_1$.

\subsection{From isoperiodic families to explicit algebraic solutions to Painlev\'{e} VI equations}\label{sec.5.2}
\subsubsection{$n=3$}\label{sec.5.2.1}
If we substitute eq. \eqref{eq.3.1.1} into the eq. \eqref{eq.2.0.10}, we obtain
\begin{eqnarray}\label{eq.5.2.1}
\det\left(\lambda\mathcal{D}(x_E,y_E) +\mathcal{P}(p)\right)&=&- \lambda^3 -\left(\lambda (p + x_E)  + p\right)^2.
\end{eqnarray}

We choose $E=(1,0)$. The cubic polynomial from eq. \eqref{eq.5.2.1} factors as
\begin{eqnarray}\label{eq.5.2.2}
\det\left(\lambda\mathcal{D}(1,0) +\mathcal{P}(p)\right)&=&- (\lambda + 1) \left(\lambda^2 +p(p+2)\lambda  + p^2\right).
\end{eqnarray}
The zeros of the cubic polynomial are:
\begin{equation*}
    \lambda_1(p)=-1, \quad \lambda_{\pm}(p)=\frac{1}{2}\left(-(p^2+2p)\pm \sqrt{p^3(p+4)}\right). 
\end{equation*}
Let us consider the following M\"{o}bius transformation:
\begin{equation}\label{eq.5.2.3}
    u=\phi_p(\lambda):=\frac{\lambda-\lambda_{-}}{\lambda_{+}-\lambda_{-}}.
\end{equation}
This gives
\begin{eqnarray}
    x(p)&:=&\phi_p(\lambda_1)=\frac{(p^2 + 2p - 2)+\sqrt{p^3(p+4)}}{2 \sqrt{p^3(p+4)}},\label{eq.5.2.4}\\
    y_0(p)&:=&\phi_p(0)=\frac{(p^2 + 2p)+\sqrt{p^3(p+4)}}{2 \sqrt{p^3(p+4)}}.\label{eq.5.2.5}
\end{eqnarray}
Since, by assumption $p\ne 0$, we are getting two solutions, one for $p>0$ and one for $p<0$:  $y_0(p)$ is a Picard solution to the Painlev\'e VI $(0,0,0, 1/2)$ as a function of $x(p)$.

\noindent
Using
\begin{eqnarray*}
   \frac{dy_0}{dx}=-\frac{p}{3},
\end{eqnarray*}
and the Okamoto transformation eq. \eqref{eq.5.1.6}, we obtain
\begin{eqnarray}\label{eq.5.2.6}
    y(p)=\frac{\left(p^2+2p+\sqrt{p^3(p+4)}\right)\left(-p^2-4p+3\sqrt{p^3(p+4)}\right)}{4p(p+1)\sqrt{p^3(p+4)}}.
\end{eqnarray}

\begin{theorem}\label{thm.5.2}
The functions $y_0(p)$ and $y(p)$ as functions of $x(p)$ given by the equations \eqref{eq.5.2.5} and \eqref{eq.5.2.6}, are solutions to the Painlev\'e VI $(0,0,0, 1/2)$  and the Painlev\'e VI $(1/8,-1/8,1/8,3/8)$, respectively. 
\end{theorem}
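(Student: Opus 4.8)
The plan is to route both assertions through the algebro-geometric machinery of Section~\ref{sec.5.1}, following the chain: $3$-isoperiodicity $\Rightarrow$ a fixed point in the Jacobian $\Rightarrow$ a Picard solution $\Rightarrow$ its Okamoto transform. First I would set up the elliptic curve by normalizing the pencil. For the fixed circle $\mathcal{D}(1,0)$ the characteristic cubic \eqref{eq.5.2.2} has the three simple roots $\lambda_1=-1$ and $\lambda_{\pm}$, and the M\"obius map \eqref{eq.5.2.3} sends $\lambda_-\mapsto 0$, $\lambda_+\mapsto 1$. Hence the genus-one curve $w^2=\det(\lambda\mathcal{D}(1,0)+\mathcal{P}(p))$ is carried onto the Legendre curve $\mathcal{E}$ of \eqref{eq.5.1.7} with modulus $x=\phi_p(-1)=x(p)$ as in \eqref{eq.5.2.4}, while the distinguished member $\lambda=0$ of the pencil (the parabola $\mathcal{P}(p)$ itself) maps to the marked point whose $u$-coordinate is $y_0(p)=\phi_p(0)$ of \eqref{eq.5.2.5}.

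Next I would identify $y_0$ with Picard's solution. By the standard correspondence between Poncelet pairs and translations on the associated elliptic curve underlying Cayley's Theorem~\ref{thm.2.1}, the $3$-Poncelet condition for $(\mathcal{D}(1,0),\mathcal{P}(p))$ is equivalent to the Abel image $z_0=\mathcal{A}(P_0)$ of the marked point $P_0$ being a $3$-torsion element of $J(\mathcal{E})$, that is $z_0=2c_1w_1(x)+2c_2w_2(x)$ with $(c_1,c_2)\in\tfrac{1}{3}\mathbb{Z}^2$. The essential content of $3$-isoperiodicity (Theorem~\ref{thm.3.2}) is that this holds simultaneously for \emph{every} $p$ with the same torsion point; equivalently, as $x=x(p)$ varies the periods of the third-kind differential $\Omega_1$ remain constant, exactly as observed after Theorem~\ref{thm.5.1}, so $c_1,c_2$ are independent of $p$. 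Therefore $y_0(p)=\hat{\wp}\big(2c_1w_1(x)+2c_2w_2(x)\big)$ is precisely the Picard solution \eqref{eq.5.1.5}, which establishes that $y_0$ solves Painlev\'e VI $(0,0,0,1/2)$.

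With $c_1,c_2$ fixed, Theorem~\ref{thm.5.1} then applies verbatim: the Okamoto transform \eqref{eq.5.1.6} of the Picard solution $y_0$ is a solution of Painlev\'e VI $(1/8,-1/8,1/8,3/8)$. To extract the explicit formula \eqref{eq.5.2.6}, I would differentiate the parametric expressions, obtaining from \eqref{eq.5.2.4}--\eqref{eq.5.2.5} the ratio $dy_0/dx=(dy_0/dp)/(dx/dp)=-p/3$, and then substitute $y_0$ together with this value of $y_0'$ into \eqref{eq.5.1.6}; simplification yields \eqref{eq.5.2.6}. Thus $y(p)$, as a function of $x(p)$, is the asserted solution.

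The delicate step is the identification in the second paragraph: verifying that the marked point $\lambda=0$ is indeed the torsion point governing the Poncelet dynamics and that its Jacobian coordinates $(c_1,c_2)$ are the fixed rationals forced by $3$-closure and independent of $p$. This is exactly where Theorem~\ref{thm.3.2} does the work, supplying the iso-period hypothesis of Theorem~\ref{thm.5.1}. A computationally heavier but self-contained alternative avoids the torsion discussion entirely: one verifies directly that the explicit parametric pairs $\big(x(p),y_0(p)\big)$ and $\big(x(p),y(p)\big)$ satisfy the respective Painlev\'e VI equations, by computing the first and second $x$-derivatives through the chain rule in $p$ and checking the identities. That route is routine but lengthy; the single computation worth recording in either approach is $dy_0/dx=-p/3$, since it is what feeds the Okamoto transformation.
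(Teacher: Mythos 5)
Your proposal is correct and follows essentially the same route as the paper: normalize the pencil by the M\"obius map \eqref{eq.5.2.3} to obtain the Legendre curve with modulus $x(p)$ and marked point $y_0(p)$, identify $y_0$ as a Picard solution, compute $dy_0/dx=-p/3$, and invoke Theorem \ref{thm.5.1} to pass to the Okamoto transform \eqref{eq.5.2.6}. In fact your second paragraph (the $3$-torsion/isoperiodicity argument showing that $(c_1,c_2)$ are fixed rationals independent of $p$, so that $y_0(x)$ really is of the Picard form \eqref{eq.5.1.5}) supplies the justification that the paper's one-line proof leaves implicit, and your verification $dy_0/dx=-p/3$ checks out.
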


The proof follows from \cite{Dragovic-Shramchenko2019}, see Theorem \ref{thm.5.1} above.

\begin{remark}
Hitchin in \cite{Hitchin1995} obtained the following solution to the Painlev\'e VI $(1/8,-1/8,1/8,3/8)$ for $n=3$:
\begin{equation*}
    y=\frac{s^2(s+2)}{s^2+s+1},\quad x=\frac{s^3(s+2)}{2s+1}.
\end{equation*}
After eliminating $s$ from these equations, one obtains:
\begin{equation*}
    x y^3 (y+2) + x^3 (2 y-1) - x^2 y (y^3 - 2 y^2 + 6 y - 2) - y^4=0.
\end{equation*}
The solution, given by the eqs. \eqref{eq.5.2.4} and \eqref{eq.5.2.6}, also satisfies the above equation.
\end{remark}

\subsubsection{$n=4$}\label{sec.5.2.2}
From the eq. \eqref{eq.2.0.10}, we obtain
\begin{eqnarray}\label{eq.5.2.7}
\det\left(\lambda\mathcal{D}(0,0) +\mathcal{P}(p)\right)=-(\lambda + 1) (\lambda^2+\lambda p^2+p^2).
\end{eqnarray}
The zeros of the cubic polynomial from \eqref{eq.5.2.7} are:
\begin{equation}\label{eq.5.2.8}
    \lambda_1(p)=-1,\quad \lambda_{\pm}(p)=\frac{1}{2}\left(-p^2\pm \sqrt{p^4-4p^2}\right). 
\end{equation}
By a similar application of the  M\"{o}bius  transformation eq. \eqref{eq.5.2.3} for eq. \eqref{eq.5.2.5}, we obtain
\begin{eqnarray}
    x(p)&:=&\phi_p(\lambda_1)=\frac{p^2 - 2 + \sqrt{p^2(p^2 - 4)}}{2\sqrt{p^2(p^2 - 4)}},\label{eq.5.2.9}\\
    y_0(p)&:=&\phi_p(0)=\frac{1}{2}\left(1+\frac{p^2}{\sqrt{p^2(p^2 - 4)}}\right).\label{eq.5.2.10}
\end{eqnarray}

Using
\begin{eqnarray*}
  \frac{dy_0}{dx}=\frac{p^2}{2},
\end{eqnarray*}
and the Okamoto transformation eq. \eqref{eq.5.1.6}, we get
\begin{eqnarray}\label{eq.5.2.11}
    y(p)=\frac{p^2 + \sqrt{p^2(p^2 - 4)}}{2p^2}.
\end{eqnarray}

Applying \cite{Dragovic-Shramchenko2019}, see Theorem \ref{thm.5.1} above, we get:
\begin{theorem}\label{thm.5.3}
The functions $y_0(p)$ and $y(p)$ as functions of $x(p)$, given by equations \eqref{eq.5.2.10}, \eqref{eq.5.2.11} are solutions to the Painlev\'e VI $(0,0,0, 1/2)$ and the Painlev\'e VI $(1/8,-1/8, 1/8, 3/8)$, respectively. 
\end{theorem}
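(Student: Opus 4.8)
The plan is to reduce Theorem \ref{thm.5.3} to the already-quoted Theorem \ref{thm.5.1} of \cite{Dragovic-Shramchenko2019} by exhibiting $y_0(p)$ as a Picard solution whose Jacobian constants are frozen by the $4$-isoperiodicity of the family. First I would interpret the factored characteristic polynomial \eqref{eq.5.2.7} as defining the elliptic curve $w^2=\det(\lambda\mathcal{D}(0,0)+\mathcal{P}(p))$ attached to the pencil. Its four branch points are the three roots $\lambda_1,\lambda_{\pm}$ of \eqref{eq.5.2.8} together with the point at infinity. The M\"{o}bius transformation $\phi_p$ of \eqref{eq.5.2.3} sends $\lambda_-\mapsto 0$, $\lambda_+\mapsto 1$, $\infty\mapsto\infty$ and $\lambda_1\mapsto x(p)$, so it normalizes this curve to the Legendre curve $\mathcal{E}:v^2=u(u-1)(u-x)$ of \eqref{eq.5.1.7} with modular parameter $x=x(p)$ given by \eqref{eq.5.2.9}. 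Under the same map the value $\lambda=0$ in the pencil---the point corresponding to the parabola $\mathcal{P}(p)$ itself---is carried to $y_0=\phi_p(0)$ of \eqref{eq.5.2.10}, which I would read as the $u$-coordinate $\hat\wp(z_0)$ of a distinguished point of $\mathcal{E}$.

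The crucial step is to show that this distinguished point sits at a \emph{fixed} location $z_0=2c_1w_1(x)+2c_2w_2(x)$ in the Jacobian $J(\mathcal{E})$, with $c_1,c_2$ independent of $p$ (hence of $x$). This is exactly what $4$-isoperiodicity provides: since $(\mathcal{D}(0,0),\mathcal{P}(p))$ is a $4$-Poncelet pair for \emph{every} $p\in\mathbb{R}^*$ by Theorem \ref{thm.3.5}, the closure datum attached to $\lambda=0$ is a fixed torsion class as $p$ varies, so the corresponding $c_1,c_2$ are constant. Equivalently, the periods of the third-kind differential $\Omega_1$ of \eqref{eq.5.1.8} are preserved along the deformation, which is precisely the hypothesis under which Theorem \ref{thm.5.1} applies. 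Granting this, $y_0(x)=\hat\wp(2c_1w_1+2c_2w_2)$ is a Picard solution \eqref{eq.5.1.5}, and therefore solves Painlev\'e VI $(0,0,0,1/2)$.

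With $y_0$ identified, the second solution follows by the Okamoto transformation \eqref{eq.5.1.6}. Here I would compute $dy_0/dx$ by the chain rule $dy_0/dx=(dy_0/dp)/(dx/dp)$ from the explicit expressions \eqref{eq.5.2.9}--\eqref{eq.5.2.10}, obtaining the stated value $p^2/2$, and substitute $y_0$ and $y_0'$ into \eqref{eq.5.1.6} to recover $y(p)$ in \eqref{eq.5.2.11}. By Theorem \ref{thm.5.1}, $y(x)$ is then the Okamoto transform of the Picard solution and solves Painlev\'e VI $(1/8,-1/8,1/8,3/8)$, completing the argument.

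The main obstacle is the middle step: proving rigorously that $c_1,c_2$ are constant, i.e.\ that $\phi_p(0)$ really is $\hat\wp$ of a period-independent point rather than merely a convenient algebraic function of $p$. The formulas alone do not make this transparent; it is the geometric content of isoperiodicity (Theorems \ref{thm.3.5}--\ref{thm.3.6}) that guarantees it, and translating the $n=4$ Poncelet closure into the constancy of the $a$- and $b$-periods $-4\pi i c_2$ and $4\pi i c_1$ of $\Omega_1$ is where the care is needed. As an independent confirmation I would eliminate $p$ between $x(p)$ and $y_0(p)$ (and between $x(p)$ and $y(p)$) to produce explicit algebraic relations, and then verify by direct substitution that these satisfy the respective Painlev\'e VI equations with parameters \eqref{eq.5.1.3} and \eqref{eq.5.1.2}; this substitution is routine but provides a self-contained check that does not rely on the torsion bookkeeping.
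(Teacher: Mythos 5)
Your proposal follows essentially the same route as the paper, whose proof of Theorem \ref{thm.5.3} consists of the normalization \eqref{eq.5.2.3}--\eqref{eq.5.2.10}, the computation $dy_0/dx=p^2/2$, the Okamoto substitution, and an appeal to Theorem \ref{thm.5.1} of Dragovi\'c--Shramchenko. Your added discussion of why $4$-isoperiodicity freezes the Jacobian constants $c_1,c_2$ (the point over $\lambda=0$ stays in a fixed torsion class as $p$ varies, so by continuity $c_1,c_2$ are constant) is a correct filling-in of the step the paper leaves implicit.
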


After eliminating $p$ from the eqs. \eqref{eq.5.2.9} and \eqref{eq.5.2.10}, we obtain
\begin{equation}\label{eq.5.2.12}
    y_0^2-2xy_0+x=0.
\end{equation}
Similarly, eliminating $p$ from the eqs. \eqref{eq.5.2.9} and \eqref{eq.5.2.11}, we again obtain
\begin{equation}\label{eq.5.2.13}
    y^2-2xy+x=0.
\end{equation}
In addition, $y_0$ and $y$ are related as follows:
\begin{equation*}
    y=\frac{y_0}{2y_0-1}.
\end{equation*}

\begin{remark}
    The solutions  $y_0(p)$ and $y(p)$ obtained in \cite{Dragovic-Radnovic2024} for $n=4$ (as well as in \cite{Hitchin1995})  as functions of $x(p)$ satisfy relations $y_0=-y$ and $y^2=x$. The solutions  $y_0(p)$ and $y(p)$ obtained  for $n=4$ in this paper satisfy \eqref{eq.5.2.13} and \eqref{eq.5.2.12}.
\end{remark}
For different approaches to algebraic solutions to Painlev\'e VI equations see e.g., \cite{Dragovic-Shramchenko2021,DragovicGontsovShramchenko2021} and references therein. An interesting question would be to find some other natural $n$-isoperiodic families of conics that would generate algebraic solutions of Painlev\'e VI equations, for each $n, m$, as listed in  \cite{Dragovic-Shramchenko2021}.

\thanks{\textbf{Acknowledgments}. This work is dedicated to the Memory of Academician A. A. Bolibrukh, on the occasion of his 75th anniversary. 

Authors would like to acknowledge the extensive use of computer algebra systems: \textit{Mathematica} and \textit{Geogebra} for various symbolic and numerical computations, and generating the figures in this research.}

This research has been partially supported by  the Science Fund of Serbia grant Integrability and Extremal Problems in Mechanics, Geometry and Combinatorics, MEGIC, Grant No. 7744592 and the Simons Foundation grant no. 854861.

\end{document}